\numberwithin{equation}{section}
\definecolor{db}{rgb}{0.0,0.0,0.8} 
\definecolor{dg}{rgb}{0.0,0.55,0.14}
\definecolor{dr}{rgb}{0.5,0,0.07}
\newtheorem{theorem}{Theorem}[section]
\newtheorem{proposition}{Proposition}[section]
\newtheorem{lemma}[proposition]{Lemma}
\newtheorem{corollary}[proposition]{Corollary}
\theoremstyle{definition}
\theoremstyle{definition}
\theoremstyle{definition}
\theoremstyle{definition}
\theoremstyle{definition}
\newtheorem{definition}[theorem]{Definition}
\theoremstyle{definition}
\newtheorem{remark}{Remark}[section]
\theoremstyle{definition}
\newtheorem{open-problem}{Open Problem}
\newcounter{step}
\newcommand{\rlemma}[1]{Lemma~\ref{#1}}
\newcommand{\rth}[1]{Theorem~\ref{#1}}
\newcommand{\rcor}[1]{Corollary~\ref{#1}}
\newcommand{\rprop}[1]{Proposition~\ref{#1}}
\def\be{\begin{equation}}
\def\ee{\end{equation}}
\def\bes{\begin{equation*}}
\def\ees{\end{equation*}}
\def\bt{\begin{theorem}}
	\def\et{\end{theorem}}
\def\bpr{\begin{proposition}}
	\def\epr{\end{proposition}}
\def\bl{\begin{lemma}}
	\def\el{\end{lemma}}
\def\bc{\begin{corollary}}
	\def\ec{\end{corollary}}
\def\br{\begin{remark}}
	\def\er{\end{remark}}
\def\ben{\begin{enumerate}}
	\def\bena{\begin{enumerate}[a)]}
		\def\een{\end{enumerate}}
	\def\bit{\begin{itemize}}
		\def\iit{\end{itemize}}
	\def\supp{\operatorname{supp}}
	\def\dist{\operatorname{dist}}
	\def\deg{\operatorname{deg}}
	\newcommand{\Prod}{\mathop{\prod}\limits}
	\DeclareMathAlphabet{\mathonebb}{U}{bbold}{m}{n}
	\def\R{{\mathbb R}}
	\def\N{{\mathbb N}}
	\def\C{{\mathbb C}}
	\def\Z{{\mathbb Z}}
	\def\fo{\forall\, }
	\def\va{\varphi}
	\def\ue{u_\varepsilon}
	\def\d{\displaystyle}
	\def\ve{\varepsilon}
	\def\p{\partial}
	\renewcommand{\vec}[1]{\boldsymbol{#1}}
	\def\supp{\operatorname{supp}}
	\def\dist{\operatorname{dist}}
	\def\deg{\operatorname{deg}}
	\DeclareMathOperator{\Div}{div}
	\def\moverlay{\mathpalette\mov@rlay}
	\def\mov@rlay#1#2{\leavevmode\vtop{%
			\baselineskip\z@skip \lineskiplimit-\maxdimen
			\ialign{\hfil$\m@th#1##$\hfil\cr#2\crcr}}}
	\newcommand{\charfusion}[3][\mathord]{
		#1{\ifx#1\mathop\vphantom{#2}\fi
			\mathpalette\mov@rlay{#2\cr#3}
		}
		\ifx#1\mathop\expandafter\displaylimits\fi}
\begin{document}
\title{A variational singular perturbation problem motivated by
   Ericksen's model for nematic liquid crystals}
\title{A variational singular perturbation problem motivated by
	Ericksen's model for nematic liquid crystals}
\author[1]{Dmitry Golovaty\thanks{dmitry@uakron.edu}}
\author[2]{Itai Shafrir\thanks{shafrir@math.technion.ac.il}}
\affil[1]{Department of Mathematics, 
	The University of Akron, Akron, Ohio 44325, USA}  
\affil[2]{Department of Mathematics, Technion - I.I.T., 32 000 Haifa, ISRAEL}

\newcommand\blfootnote[1]{%
	\begingroup
	\renewcommand\thefootnote{}\footnote{#1}%
	\addtocounter{footnote}{-1}%
	\endgroup
}


\maketitle
\begin{abstract}
We study the asymptotic behavior,  when $\varepsilon\to0$, of the
minimizers $\{u_\varepsilon\}_{\varepsilon>0}$ for the energy
  \begin{equation*}
 E_\varepsilon(u)=\int_{\Omega}\Big(|\nabla u|^2+\big(\frac{1}{\varepsilon^2}-1\big)|\nabla|u||^2\Big),
\end{equation*}
over the class of maps $u\in H^1(\Omega,{\mathbb R}^2)$ satisfying the
boundary condition $u=g$ on $\partial\Omega$, where $\Omega$ is a
smooth, bounded and simply connected domain in ${\mathbb R}^2$ and
$g:\partial\Omega\to S^1$ is a smooth boundary data of degree
$D\ge1$. 
 The motivation comes from a simplified version of the Ericksen model
 for nematic liquid crystals with variable degree of orientation. We
 prove convergence (up to a subsequence) of $\{u_\varepsilon\}$
 towards a singular $S^1$--valued harmonic map $u_*$, a result that
 resembles the one  obtained in \cite{BBH} for an analogous problem for
 the Ginzburg-Landau energy. There are however two striking
 differences between our result and the one involving the
 Ginzburg-Landau energy. First, in our problem the singular limit
 $u_*$ may have singularities of degree strictly larger than
 one. Second, we find that the principle of \enquote{equipartition} holds for
 the energy of the minimizers, i.e., the contributions of the two terms in
 $E_\varepsilon(u_\varepsilon)$ are  essentially equal.  
 \end{abstract}

\section{Introduction}
Let $\Omega\subset\R^2$ be a smooth, bounded and simply connected
domain and $g:\partial\Omega\to S^1$ a smooth boundary condition.
For each $\varepsilon>0$ consider the energy
\begin{equation}\label{eq:energy}
 E_\varepsilon(u)=\int_{\Omega}\Big(|\nabla u|^2+\big(\frac{1}{\varepsilon^2}-1\big)|\nabla|u||^2\Big)\,,
\end{equation}
and let $u_\varepsilon$ denote a minimizer for $E_\varepsilon$ over
$$
H^1_g(\Omega)=H^1_g(\Omega;\R^2):=\{u\in H^1(\Omega;\R^2)\text{ s.t. }u=g\text{ on
}\partial\Omega\}.
$$
We are interested in the limit of $u_\varepsilon$
when $\varepsilon$ goes to zero.

This problem can be viewed as a relaxation of the problem
\begin{equation}
  \label{eq:197}
  \min\{\int_\Omega |\nabla v|^2\,:\,v\in
   H^1_g(\Omega;S^1)\}.
\end{equation}
 In fact, when the {\em degree} of $g$---to be denoted hereafter by $D$---is zero, no relaxation is needed since the problem \eqref{eq:197} has a
 solution. In this case there exists a (smooth) scalar
 function $\varphi_0$ such that $g=e^{i\varphi_0}$ and the (unique) minimizer in
 \eqref{eq:197} is given by
 $u_0=e^{i\widetilde\varphi_0}$, where $\widetilde\varphi_0$ is the
 harmonic extension of $\varphi_0$ to $\Omega$. When $D=0$, we prove in \rth{th:deg-zero} that
 $u_\varepsilon\to u_0$
 in $C^m(\overline\Omega)$, $\forall m$. This result
   is analogous  to the one treating the zero degree case of the Ginzburg-Landau
 energy in \cite{bbh}.
  
  The more interesting situation arises when $D=\deg g\ne 0$ because for such $g$ the set of competitors $H^1_g(\Omega;S^1)$ is empty (see e.g.,
  \cite[Introduction]{BBH}) and the problem \eqref{eq:197} has no solution. Even though the minimization
  problem \eqref{eq:197} is by itself meaningless, one may still consider the limit of
  $u_\varepsilon$ when $\varepsilon$ goes to zero, as a
  \enquote{generalized minimizer}.

This type of relaxation was carried out in
  the past for different energies. In their famous work,
  Bethuel, Brezis and H\'elein~\cite{BBH} (see also \cite{struwe}) studied the limit of
  the minimizers $\{v_\varepsilon\}$ for the energy
  \begin{equation}
    \label{eq:198}
    F_\varepsilon(u)=\int_{\Omega}\Big(|\nabla u|^2+\frac{1}{2\varepsilon^2}(1-|u|^2)^2\Big)\,,
  \end{equation}
 over $H^1_g(\Omega)$. In the case $\deg g=D\ge1$ they showed for a
 subsequence that
 \begin{equation}
   \label{eq:199}
   v_{\varepsilon_n}\to u_*=e^{i\varphi}\Prod_{j=1}^D
   \frac{z-a_j}{|z-a_j|}\text{ in }C^{1,\alpha}_{\text{loc}}({\overline\Omega}\setminus\{a_1,\ldots,a_D\}),
 \end{equation}
  where $\varphi$ is a harmonic function determined by the constraint
  $u_*=g$ on $\partial\Omega$. Moreover,
  \begin{equation}
    \label{eq:202}
    \lim_{\varepsilon\to0} F_\varepsilon(v_\varepsilon)-2\pi
    D|\ln\varepsilon|=\min_{{\bf b}\in\Omega^D}W({\bf b})+D\gamma,
  \end{equation}
  where $\gamma$ is a universal constant and $W$ is the {\em
    renormalized energy} that was introduced in \cite{BBH}, see
  \eqref{eq:184} and \eqref{eq:186} below. In summary, the limit of a
  sequence of minimizers has $D$ singularities of degree one, with
  their locations determined by minimization of $W$ over all
  configurations of $D$ {\em distinct} points in $\Omega$. Interestingly, the same
  type of limit as in \eqref{eq:199} is also obtained for a different relaxation,
  studied by Hardt and Lin~\cite{hl95}. In contrast with the
  case $p=2$, the set $W^{1,p}_g(\Omega;S^1)\ne\emptyset$ for $p\in[1,2)$. Denoting by $w_p$ a minimizer
  for $\int_\Omega|\nabla u|^p$ over $W^{1,p}_g(\Omega;S^1)$ for each
  $p\in[1,2)$, they showed for a subsequence $p_n\nearrow 2$ that an
  analogous result to \eqref{eq:199} holds, namely,
  \begin{equation}
    \label{eq:200}
     w_{p_n}\to u_*=e^{i\varphi}\prod_{j=1}^D
   \frac{z-a_j}{|z-a_j|}\text{ in }C^{1,\alpha}_{\text{loc}} ({\overline\Omega}\setminus\{a_1,\ldots,a_D\}).
  \end{equation}
  Moreover, an analogous formula to \eqref{eq:202} holds in this case
  as well and the locations of the singularities $a_1,\ldots,a_D$ are still 
  determined by minimizing the same renormalized energy as above.

  \par In view of these two examples, one may suspect that any \enquote{reasonable} relaxation would
  lead to the same limit. Somewhat surprisingly, we find that this
  isn't the case for the limit of the minimizers $\ue$ of
  $E_\varepsilon$ over $H^1_g(\Omega)$. We will show that, for a
  subsequence, we have
\begin{equation}
  \label{eq:201}
     u_{\varepsilon_n}\to u_*=e^{i\varphi}\prod_{j=1}^N
   \left(\frac{z-a_j}{|z-a_j|}\right)^{d_j}\text{ in }C^m_{\text{loc}}({\overline\Omega}\setminus\{a_1,\ldots,a_N\}),
 \end{equation}
 with degrees $d_j\ge1$,$\forall j$, i.e., the limit is the {\em canonical
 harmonic map} associated with $g$, the singularities and their degrees
 (see \cite{BBH}). However, in contrast to \eqref{eq:199} and
 \eqref{eq:200}, we might have $d_j\ge2$ for
 some values of $j$, so that  a strict inequality $N<D$ may
 occur (see \rcor{cor:rad} and \rprop{prop:B} below). Moreover, the location of the singularities and their degrees
 are determined by minimizing a different function than $W$.

An important property of the energy \eqref{eq:energy} is its conformal
invariance, that is, we have $E_\varepsilon(u)=E_\varepsilon(u\circ
F)$ for every conformal map $F$. We shall often use this property in the sequel. For example, it
allows us to assume that the simply connected domain $\Omega$ is the
unit disc (thanks to the Riemann mapping theorem).
 Our first result for the case $D\ge1$ 
 provides a convergence result and a partial description of the limit.
\begin{theorem}
\label{th:main1}
 Let $\Omega$ be a smooth, bounded, simply connected domain in $\R^2$.
 Let $g:\partial\Omega\to S^1$ be a smooth
 boundary condition of degree $D\ge1$. Then,
 \begin{equation}
   \label{eq:42}
   \frac{2\pi D}{\varepsilon} \le E_\varepsilon(u_\varepsilon)\le \frac{2\pi D}{\varepsilon}+C.
 \end{equation}
Moreover,  up to a subsequence we have 
\begin{equation}
\label{eq:91}
 u_{\varepsilon_n}\to
  u_* \text{ in }C^{m}_{\text{loc}}\left(\overline\Omega\setminus \{a_1,\ldots,
    a_N\}\right),~\forall m,
\end{equation}
where $u_*$ is a smooth $S^1$--valued harmonic map in $\overline\Omega\setminus \{a_1,\ldots,
    a_N\}$.
The singularities 
$a_1,\ldots,a_N$ are distinct points in $\Omega$, the degree of $u_*$
around each $a_j$ is an integer $d_j> 0$, and 
 the compatibility condition $\sum_{j=1}^N d_j=D$ holds. Moreover,
 $u_*$ is the {\em canonical} harmonic map associated with $g$, the
 points $a_1,\ldots,a_N$ and the degrees $d_1,\ldots,d_N$.
\end{theorem}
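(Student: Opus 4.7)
The plan is to exploit the conformal invariance of $E_\varepsilon$ to reduce to $\Omega$ being the unit disc, and systematically to use the polar decomposition: for $u = \rho e^{i\varphi}$ with $\rho = |u|$,
\[
E_\varepsilon(u) = \int_\Omega \frac{|\nabla\rho|^2}{\varepsilon^2} + \rho^2 |\nabla\varphi|^2 .
\]
For the lower bound I would combine the pointwise AM--GM inequality $\frac{|\nabla\rho|^2}{\varepsilon^2} + \rho^2|\nabla\varphi|^2 \geq \frac{2\rho|\nabla\rho||\nabla\varphi|}{\varepsilon}$ with the coarea formula, rewriting $\int_\Omega \rho|\nabla\rho||\nabla\varphi|$ as $\int_0^1 t\int_{\{\rho=t\}}|\nabla\varphi|\,d\mathcal{H}^1\,dt$. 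For a.e.\ $t\in(0,1)$ the super-level set $\{\rho>t\}$ is an open neighborhood of $\partial\Omega$ on which $u/|u|$ is continuous, so homotopy invariance of the degree forces $\int_{\{\rho=t\}}|\nabla\varphi|\geq 2\pi D$, and integrating in $t$ yields $E_\varepsilon(u)\geq 2\pi D/\varepsilon$. For the matching upper bound I would take the canonical harmonic map $u^0$ associated with any preferred singular configuration (for instance a single singularity of degree $D$ at the origin), replacing it inside each small disc $B_R(a_j)$ by the radial ansatz $u = (r/R)^{\varepsilon d_j}e^{i d_j\theta}$. A direct computation on this ansatz yields $\int_{B_R}\frac{|\nabla\rho|^2}{\varepsilon^2} = \int_{B_R}\rho^2|\nabla\varphi|^2 = \pi d_j/\varepsilon$, already exhibiting the equipartition phenomenon announced in the abstract, while the remaining contribution from $u^0$ outside the discs is $O(1)$.

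The heart of the argument, and what I expect to be the main obstacle, is a small-energy (``$\eta$-compactness'') statement: there exist universal $\eta, r_0 > 0$ such that $E_\varepsilon(u_\varepsilon; B_r(x))\leq \eta/\varepsilon$ with $r\leq r_0$ forces $\rho_\varepsilon \geq 1/2$ on $B_{r/2}(x)$. Its proof localizes the coarea--degree mechanism above: any ``hole'' in $\rho_\varepsilon$ must enclose an integer degree and so contributes at least $\approx 2\pi/\varepsilon$. Granted this, the upper bound $2\pi D/\varepsilon + C$ permits at most $O(D/\eta)$ bad balls of radius $r_0$; along a subsequence their centers converge to finitely many points $a_1,\ldots,a_N$, and a standard boundary analysis using smoothness of $g$ excludes $a_j\in\partial\Omega$.

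On any compact $K \subset \overline\Omega\setminus\{a_j\}$, $E_\varepsilon(u_\varepsilon;K)$ is then bounded independently of $\varepsilon$, so $\int_K|\nabla u_\varepsilon|^2 \leq C_K$, $\int_K|\nabla\rho_\varepsilon|^2 = O(\varepsilon)$, and (via the maximum principle giving $\rho_\varepsilon\leq 1$) $\rho_\varepsilon\to 1$ uniformly on $K$. Passing to a further subsequence, elliptic regularity for the Euler--Lagrange system yields $C^m_{\mathrm{loc}}$ convergence to a smooth $S^1$-valued harmonic map $u_*$ on $\overline\Omega\setminus\{a_j\}$ satisfying $u_* = g$ on $\partial\Omega$. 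The degree $d_j$ around each $a_j$ is a nonzero integer (else the singularity would be removable, contradicting the concentration); it must be positive because a negative vortex would add $\geq 2\pi/\varepsilon$ to $E_\varepsilon$ without offsetting the required total degree; and continuity of the degree on large circles encircling all singularities gives $\sum d_j = D$. Finally, an $S^1$-valued harmonic map on the punctured disc with prescribed smooth boundary trace and prescribed degrees at the punctures is uniquely the canonical harmonic map, so $u_*$ has the asserted form.
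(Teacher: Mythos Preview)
Your energy bounds and the coarea--degree lower bound are correct and match the paper's Proposition~3.3 and Corollary~3.4 closely. However, there are two substantive gaps further on.

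\medskip
\noindent\textbf{The $\eta$-compactness step.} Your proposed justification, ``any hole in $\rho_\varepsilon$ must enclose an integer degree and so contributes at least $\approx 2\pi/\varepsilon$'', only applies when the local degree is nonzero; it says nothing about a degree-zero dip of $\rho_\varepsilon$. The paper's analogue (Lemma~4.1) is proved instead by a \emph{comparison argument}: if the boundary integral on some $\partial B_{r'}$ is small then $\deg(u/|u|,\partial B_{r'})=0$, one may write $u=\rho e^{i\varphi}$ there, and comparing with the harmonic extensions of $\rho$ and $\varphi$ bounds $E_\varepsilon(u;B_{r'})$ and forces energy decay on dyadic scales; iterating gives $\rho(0)\ge\beta$. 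This uses minimality in an essential way that your coarea sketch does not capture.

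\medskip
\noindent\textbf{The main gap: local energy bound away from the singularities.} The sentence ``On any compact $K\subset\overline\Omega\setminus\{a_j\}$, $E_\varepsilon(u_\varepsilon;K)$ is then bounded independently of~$\varepsilon$'' does \emph{not} follow from the bad-disc construction. Knowing $\rho_\varepsilon\ge\beta$ on~$K$ gives no control whatsoever on $\int_K\rho_\varepsilon^2|\nabla\varphi_\varepsilon|^2$: the total energy is of order $1/\varepsilon$, and in the explicit radial minimizer $|z|^{\varepsilon D}e^{iD\theta}$ it is spread over the whole disc, not concentrated near the zero. Bridging this gap is the technical heart of the paper's Section~4: one needs (i) an a priori $L^\infty$ bound $|\eta_\varepsilon|\le C/\varepsilon$ on the single-valued part of the phase, obtained by the ``good rays'' selection and the maximum principle (Lemmas~4.2--4.4); (ii) a uniform $W^{1,p}$ bound for $p<2$ (Proposition~4.3), obtained by writing the equation for the phase in divergence form and absorbing the coefficient $1-\rho^2$ via $1-\tilde\beta\ll 1$; and (iii) the Pohozaev identity (Corollary~4.2) to control the normal derivative on $\partial\Omega$. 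Only then does one get $\int_K|\nabla u_\varepsilon|^2\le C_K$ (Theorem~4.1), which is what feeds into the $C^m$-convergence. The stronger statement $E_\varepsilon(u_\varepsilon;K)\le C_K$ that you assert is not even proved at this stage of the paper; it appears only later (Proposition~5.2), via the Hopf differential and Trudinger's inequality, and is needed for Theorem~2 rather than Theorem~1.

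\medskip
Once $\int_K|\nabla u_\varepsilon|^2\le C_K$ is in hand, your concluding paragraph (degree positivity via the refined coarea lower bound~\eqref{eq:15} and the upper bound, exclusion of boundary singularities via Pohozaev as in~\cite{BBH}, identification of $u_*$ as canonical via passing to the limit in $\operatorname{div}(u\wedge\nabla u)=0$) is essentially what the paper does and is fine.
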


Our second result establishes a precise asymptotic expansion of the energy
$E_\varepsilon(u_\varepsilon)$ by computing
$$\lim_{\varepsilon\to0} E_\varepsilon(u_\varepsilon)-\frac{2\pi
  D}{\varepsilon}.
$$
This allows us to obtain a criterion for the choice of the
points $a_1,\ldots,a_N$ and their associated degrees
$d_1,\ldots,d_N$.
In order to state the next theorem, we will need the following definitions.

   For each integer $D\ge1$ we set
   \begin{multline}
     \label{eq:79}
     \mathcal{H}_D(\partial\Omega)=\{g\in
     C^1(\partial\Omega;S^1):\deg g=D\text{ and }
     g=G\big|_{\partial\Omega}\text{ for some holomorphic }\\G\in
     C^1(\overline\Omega;\C) \text{ s.t. }G(\partial\Omega)=\partial B_1\}.
   \end{multline}
  An explicit description of $\mathcal{H}_D(\partial\Omega)$ is
  available using the concept of Blaschke products. Indeed, when
  $\Omega=B_1$, to any  configuration of $D\ge1$ points ${\bf a}\in
  B_1^D$  we associate a Blaschke product
     \[{\cal B}_{\bf a}(z):=\Prod_{j=1}^D \frac{z-a_j}{1-\bar a_jz}.\]
   Then we have,
   \begin{equation}
     \label{eq:83}
     \mathcal{H}_D(\partial B_1)=\{e^{i\alpha}{\cal B}_{\bf
       a}(z)\big|_{\partial B_1}:\alpha\in\R, {\bf a}\in B_1^D\}.
   \end{equation}
   For an arbitrary smooth and simply connected $\Omega$ we may fix a
   Riemann mapping $F:\Omega\to B_1$ (with smooth extension to the
   boundary) and then clearly
   \begin{equation}
     \label{eq:93}
      \mathcal{H}_D(\partial\Omega)=\{g\circ F: g\in \mathcal{H}_D(\partial B_1)\},
    \end{equation}
   so any function in $\mathcal{H}_D(\partial\Omega)$ has the form
   $e^{i\alpha}\Prod_{j=1}^D \frac{F(z)-a_j}{1-\bar
       a_jF(z)}$, for some $\alpha\in\R$ and $ {\bf a}\in B_1^D$.
 
   Let $g_1,g_2:\partial\Omega\to S^1$ be two smooth maps, or more
 generally, maps in $H^{1/2}(\partial\Omega;S^1)$ with the same
 degree. We define a {\em distance} between the maps as follows:
 \begin{equation}
   \label{eq:dist}
   d_{H^{1/2}}(g_1,g_2)=\inf\{\|\nabla w\|_{L^2(\Omega)}\,:\,w\in
   H^1(\Omega;S^1), w=g_1{\bar g}_2\text{ on }\partial\Omega\}\,.
 \end{equation}
  Note that the assumption $\deg g_1=\deg g_2$ implies that $\deg
  g_1{\bar g}_2=0$, whence we may write on $\partial\Omega$, $
  g_1{\bar g}_2=e^{i\psi}$ for some scalar function $\psi$ on
  $\partial\Omega$ (with $\psi$ smooth, or more generally in $H^{1/2}(\partial\Omega)$). It is then
  clear that
  \begin{equation}
    \label{eq:65}
     d_{H^{1/2}}(g_1,g_2)=\|\nabla\widetilde\psi\|_{L^2(\Omega)}\,,
   \end{equation}
   where $\widetilde\psi$ denotes the harmonic extension of
   $\psi$. Naturally we denote for $g\in C^1(\partial\Omega;S^1)$ of degree $D$,
   \begin{equation}
     \label{eq:94}
     d_{H^{1/2}}(g,\mathcal{H}_D(\partial\Omega))=\inf_{f\in\mathcal{H}_D(\partial\Omega)}d_{H^{1/2}}(g,f)\,.
   \end{equation}
   It is easy to see that the infimum in \eqref{eq:94} is
   actually attained. Note that when $\Omega=B_1$ we have
   \begin{equation}
     \label{eq:110}
  d_{H^{1/2}}^2(g, \mathcal{H}_D(\partial B_1))=
\min_{{\bf b}\in B_1^D} \Big\{\int_{B_1}|\nabla\widetilde\varphi|^2:
\varphi\in H^{1/2}(\partial B_1)\text{ s.t. }
e^{i\varphi}=g{\overline{\cal B}_{\bf
    b}}\Big\},
\end{equation}
 where as usual $\widetilde\varphi$ denotes the harmonic extension of
 $\varphi$. 
A similar expression can be written for a general $\Omega$, using
the Riemann mapping $F:\Omega\to B_1$. 
\par    The \enquote{excess energy} 
   $d^2_{H^{1/2}}(g,\mathcal{H}_D(\partial\Omega))$ is related to the 
   notion of {\em renormalized energy} $W$ from
   \cite{BBH},  but there are important differences
     between the two, see Remark~\ref{rem:W} below. Next, we present an explicit expression
   for $d^2_{H^{1/2}}(g,\mathcal{H}_D(\partial\Omega))$ using
   quantities that also appear in $W$.
   We begin by recalling one of the equivalent definitions of $W$ from
   \cite{BBH}. It is convenient to denote by $(\Omega^N)^*$ the subset of $\Omega^N$ consisting
   only of configurations of
   {\em distinct} points. Given a boundary condition $g:\partial\Omega\to S^1$ of
   degree $D>0$,  the points ${\bf
     a}\in (\Omega^N)^*$, and the degrees ${\bf
     d}\in {\Z}^N$ satisfying $\sum_{j=1}^Nd_j=D=\deg
   g$, we first consider the associated {\em canonical harmonic map}
   $$
   u_0=e^{i\widetilde\varphi}\prod_{j=1}^N\Big(\frac{z-a_j}{|z-a_j|}\Big)^{d_j},
   $$
    where $\widetilde\varphi$ is the harmonic extension of $\varphi$,
    which in turn is determined (up to an additive constant in
    $2\pi\Z$) by the requirement that $u_0=g$ on $\partial\Omega$. 
    Thm\,I.8 in \cite{BBH} asserts that
    \begin{equation}
      \label{eq:184}
      \int_{\Omega\setminus\bigcup_{j=1}^NB_\lambda(a_j)}|\nabla
      u_0|^2=2\pi\Big(\sum_{j=1}^Nd_j^2\Big)\ln(1/\lambda)+W+O(\lambda^2),\text{
        as }\lambda\to0^+.
    \end{equation}
    An explicit expression for 
   $W=W({\bf a},{\bf d},g)$ is given in \cite[Thm\,I.7]{BBH} (note that
   there is a factor of $2$ difference between our definition and the
   one in \cite{BBH}). This expression involves the solution
   $\widetilde\Phi_0$ of
   \begin{equation}
\label{eq:185}     
\left\{
\begin{aligned}
\Delta {\widetilde\Phi}_0&=2\pi\sum_{j=1}^Nd_j\delta_{a_j}\text{ in }\Omega,\\
\frac{\partial{\widetilde\Phi}_0}{\partial\nu}&=g\times g_\tau \text{ on }\partial\Omega,
\end{aligned}
\right.
\end{equation}
 with the normalization condition
 $\int_{\partial\Omega}{\widetilde\Phi}_0=0$.
 Setting $R_0(x)=\widetilde\Phi_0(x)-\sum_{j=1}^Nd_j\ln|x-a_j|$, we
 have according to \cite{BBH},
 \begin{equation}
   \label{eq:186}
   W({\bf a},{\bf d},g)=\int_{\partial
  \Omega}{\widetilde\Phi}_0(g\times
g_\tau)\,d\tau-2\pi\sum_{j=1}^Nd_jR_0(a_j)-2\pi\sum_{i\ne j}d_id_j\ln|a_i-a_j|.
 \end{equation}
The relation between $d^2_{H^{1/2}}(g,\mathcal{H}_D(\partial\Omega))$
and $W$ is clarified in the next proposition. To state it, we define,
as in \cite{lr96},
\begin{equation}
     \label{eq:95}
     \widetilde W({\bf a},{\bf d})=\inf\,\big\{
     W({\bf a},{\bf d},f):f\in
     C^1(\partial\Omega;S^1), \deg f=D=\sum_{j=1}^Nd_j\big\}.
   \end{equation}
\begin{proposition}
  \label{prop:W}
  We have
   \begin{equation}
     \label{eq:96}
     d_{H^{1/2}}^2(g,\mathcal{H}_D(\partial\Omega))=\inf\big\{W({\bf a},{\bf d},g)-\widetilde
     W({\bf a},{\bf d}):{\bf a}\in (\Omega^N)^*, 
     {\bf d}\in\Z_{+}^N, \sum_{j=1}^N d_j=D, N\ge1\}.
   \end{equation}
Moreover,   when $\Omega=B_1$,
\begin{equation}
  \label{eq:187}
d_{H^{1/2}}^2(g, \mathcal{H}_D(B_1))=
\min_{\substack{N\ge1\\{\bf a}\in (B_1^N)^*\\d_j\ge1,\forall j\\\sum_{j=1}^N d_j=D}} \int_{\partial B_1}{\widetilde\Phi}_0(g\times g_\tau)\,d\tau-2\pi\sum_{j=1}^Nd_jR_0(a_j)-2\pi\sum_{i,j=1}^Nd_id_j\ln|1-a_i{\bar a}_j|.
\end{equation}
\end{proposition}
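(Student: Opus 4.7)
My plan is to parametrize $\mathcal{H}_D(\partial\Omega)$ via Blaschke configurations and reduce the distance $d_{H^{1/2}}^2(g,\cdot)$ to a difference of renormalized energies. By \eqref{eq:83} and \eqref{eq:93} (with $F:\Omega\to B_1$ a fixed Riemann map), every $f\in\mathcal{H}_D(\partial\Omega)$ is the boundary trace of a holomorphic map of the form $e^{i\alpha}\prod_{j=1}^N\bigl(\tfrac{F(z)-b_j}{1-\bar b_jF(z)}\bigr)^{d_j}$ with distinct $b_j\in B_1$, $d_j\ge 1$, and $\sum_j d_j=D$. Setting $a_j=F^{-1}(b_j)\in\Omega$ and denoting the resulting element (with $\alpha=0$) by $f_*=f_*({\bf a},{\bf d})$, the constant phase $e^{i\alpha}$ only shifts $\psi$ by a constant and so has no effect on \eqref{eq:65}. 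Thus, up to phase, every $f\in\mathcal{H}_D(\partial\Omega)$ corresponds to a unique admissible configuration $({\bf a},{\bf d})$, and $d_{H^{1/2}}^2(g,\mathcal{H}_D(\partial\Omega))=\inf_{({\bf a},{\bf d})}d_{H^{1/2}}^2(g,f_*({\bf a},{\bf d}))$.

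The heart of the argument is the identity
\[
d_{H^{1/2}}^2(g,f_*)=W({\bf a},{\bf d},g)-W({\bf a},{\bf d},f_*).
\]
Let $u_0^g=e^{i\widetilde\varphi_g}h$ and $u_0^{f_*}=e^{i\widetilde\varphi_{f_*}}h$ be the canonical harmonic maps at $({\bf a},{\bf d})$, where $h=\prod_j\bigl((z-a_j)/|z-a_j|\bigr)^{d_j}$. The quotient $u_0^g\overline{u_0^{f_*}}=e^{i(\widetilde\varphi_g-\widetilde\varphi_{f_*})}$ has harmonic argument in $\Omega$ and boundary trace $e^{i\psi}$, so $\widetilde\varphi_g-\widetilde\varphi_{f_*}=\widetilde\psi+\text{const}$ and hence $d_{H^{1/2}}^2(g,f_*)=\int_\Omega|\nabla(\widetilde\varphi_g-\widetilde\varphi_{f_*})|^2$. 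On the other hand, \eqref{eq:184} supplies $W({\bf a},{\bf d},g)-W({\bf a},{\bf d},f_*)=\lim_{\lambda\to 0}\int_{\Omega\setminus\bigcup_jB_\lambda(a_j)}\bigl(|\nabla u_0^g|^2-|\nabla u_0^{f_*}|^2\bigr)$. Expanding $|\nabla u_0|^2=|\nabla\widetilde\varphi|^2+2\nabla\widetilde\varphi\cdot\nabla\theta_*+|\nabla\theta_*|^2$ with $\theta_*=\sum_jd_j\arg(z-a_j)$ (multi-valued, single-valued gradient), the singular $|\nabla\theta_*|^2$-terms cancel and a short calculation reduces the discrepancy to
\[
W({\bf a},{\bf d},g)-W({\bf a},{\bf d},f_*)-d_{H^{1/2}}^2(g,f_*)=2\lim_{\lambda\to 0}\int_{\Omega\setminus\bigcup_jB_\lambda(a_j)}\nabla\widetilde\psi\cdot\nabla(\widetilde\varphi_{f_*}+\theta_*).
\]
Now $\widetilde\varphi_{f_*}+\theta_*$ is (locally) the argument of the Blaschke product ${\cal B}_{\bf a}\circ F$, i.e., the harmonic conjugate of $\log|{\cal B}_{\bf a}\circ F|$; the latter vanishes on $\partial\Omega$, so Cauchy--Riemann give $\partial_\nu\arg({\cal B}_{\bf a}\circ F)=\pm\partial_\tau\log|{\cal B}_{\bf a}\circ F|=0$ on $\partial\Omega$. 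Integrating by parts kills the $\partial\Omega$ contribution, and the $\partial B_\lambda(a_j)$ contributions are $O(\lambda)$ because $\widetilde\psi$ is bounded while $\partial_\nu\arg({\cal B}_{\bf a}\circ F)$ remains bounded on $\partial B_\lambda$ (using $\partial_\nu\arg(z-a_j)=0$ there); hence the discrepancy vanishes.

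The identity has two immediate consequences. Since $d_{H^{1/2}}^2(g,f_*)\ge 0$ for every admissible boundary datum $g$, the element $f_*$ minimizes $W({\bf a},{\bf d},\cdot)$, so $\widetilde W({\bf a},{\bf d})=W({\bf a},{\bf d},f_*)$; the identity rewrites as $d_{H^{1/2}}^2(g,f_*)=W({\bf a},{\bf d},g)-\widetilde W({\bf a},{\bf d})$, and taking the infimum over admissible $({\bf a},{\bf d})$ yields \eqref{eq:96}. For \eqref{eq:187} with $\Omega=B_1$, one computes $\widetilde W$ explicitly using $u_0^{f_*}={\cal B}_{\bf a}/|{\cal B}_{\bf a}|$ so that $|\nabla u_0^{f_*}|=|\nabla\log|{\cal B}_{\bf a}||$: Green's theorem applied to $\log|{\cal B}_{\bf a}|=\sum_jd_j(\log|z-a_j|-\log|1-\bar a_jz|)$, which vanishes on $\partial B_1$ and has logarithmic singularities $d_j\log|z-a_j|$ at each $a_j$, yields $W({\bf a},{\bf d},f_*)=2\pi\sum_{i,j}d_id_j\log|1-\bar a_ja_i|-2\pi\sum_{i\ne j}d_id_j\log|a_i-a_j|$; subtracting from \eqref{eq:186} and using $|1-\bar a_ja_i|=|1-a_i\bar a_j|$ produces \eqref{eq:187}. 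The main obstacle is the vanishing of the boundary correction, which rests on the defining feature of Blaschke products---modulus one on $\partial B_1$, hence $\partial_\nu\arg=0$ there by Cauchy--Riemann; once this identity is in hand the rest is bookkeeping with canonical harmonic maps.
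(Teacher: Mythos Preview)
Your proof is correct and follows essentially the same route as the paper's. Both arguments establish the key identity $d_{H^{1/2}}^2(g,f_*)=W({\bf a},{\bf d},g)-W({\bf a},{\bf d},f_*)$ by expanding the canonical harmonic maps $u_0^g=e^{i\widetilde\varphi_g}h$ and $u_0^{f_*}=e^{i\widetilde\varphi_{f_*}}h$ and eliminating the cross term via the Cauchy--Riemann relation $\partial_\nu\Theta_0=-\partial_\tau\Phi_0=0$ on $\partial\Omega$; both then deduce $\widetilde W({\bf a},{\bf d})=W({\bf a},{\bf d},f_*)$ and take the infimum. The only organizational differences are that the paper packages the cross-term cancellation by citing the already-proved identity \eqref{eq:193} (whose derivation in \rlemma{lem:fine-lb} is exactly your integration by parts), and for \eqref{eq:187} the paper quotes the formula for $\widetilde W({\bf a},{\bf d})$ from \cite{lr96} whereas you sketch the direct Green's-theorem computation on $\log|{\cal B}_{\bf a}|$.
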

Comparing \eqref{eq:187} to \eqref{eq:186} we notice the absence from
\eqref{eq:187}  of
the last term in \eqref{eq:186},
$$
-2\pi\sum_{i\ne j}d_id_j\ln
|a_i-a_j|,
$$
responsible for repulsion between vortices. This might explain the fact that vortices of degree
$d_j\ge2$ are allowed for minimizers of $E_\varepsilon$. In the context of Ginzburg-Landau-type problems, we are not aware of another situation  where energy minimizers are characterized by point singularities that have unbounded energy in the limit $\varepsilon\to0$ {\it and} have degrees different from $\pm1$.

\begin{remark}
  There is an alternative simple expression to the one in \eqref{eq:187} in
  which the minimization is over all the configurations of $D$ points
  in $B_1$ (not necessarily distinct):
\begin{equation}
 \label{eq:196}
d_{H^{1/2}}^2(g, \mathcal{H}_D(B_1))=
\min_{{\bf a}\in B_1^D} \int_{\partial B_1}{\widetilde\Phi}_0(g\times g_\tau)\,d\tau-2\pi\sum_{j=1}^DR_0(a_j)-2\pi\sum_{i,j=1}^D\ln|1-a_i{\bar a}_j|,
\end{equation}
where ${\widetilde\Phi}_0$ is like in \eqref{eq:185}, but with $N=D$
and $d_j=1$ for all $j$ (and accordingly
$R_0(x)=\widetilde\Phi_0(x)-\sum_{j=1}^D\ln|x-a_j|$).
The verification of \eqref{eq:196} from \eqref{eq:187} is
straightforward.
\end{remark}
\begin{remark}
  \label{rem:W}
    We should emphasize that, although there are some
   common expressions in the explicit formulas for the  renormalized
   energy $W({\bf a},{\bf d},g)$ and the \enquote{excess energy}
   $d_{H^{1/2}}^2(g,\mathcal{H}_D(\partial\Omega))$, there is a basic
   difference between the two. The renormalized energy has an
   intrinsic meaning. To cite from the Introduction in \cite{BBH}: it is
   what remains in the energy after the singular 
   \enquote{core energy} $2\pi d|\log\lambda|$ has been removed in the
   problem of shrinking holes of radii $\lambda$,
   which is closely related to \eqref{eq:184}. This feature of $W$, namely,
   that it represents the \enquote{regular part} of the energy of singular $S^1$--valued
   harmonic maps,  is the reason behind  its appearance in many different
   variational problems. For example, in addition to the case involving the Ginzburg-Landau
   energy~\cite{BBH}, one can find analogous $W$ in the problems considered in \cite{an-sh,BPP,hang-lin,hl95}.
  The excess energy $d_{H^{1/2}}^2(g,\mathcal{H}_D(\partial\Omega))$
  is quite different in that it is specific to the particular
  choice of the energy $E_\varepsilon$ we consider. It does not represent
  a contribution from the phase alone (as does $W$) but rather
  a contribution from {\bf both} the phase {\bf and} the modulus of the maps.
\end{remark}

\par
We are now ready to state our second main theorem that provides a more
precise information about the asymptotic behavior of the energy and
the location of the singularities of the limit $u_*$. Note that we denote
$\rho_\varepsilon=|u_\varepsilon|$ throughout the manuscript.
   \begin{theorem}
     \label{th:main2}
      Let $\Omega$, $g$ and $u_*$ together with the singular points $a_1,\ldots,a_N$ and the degrees $d_1,\ldots,d_N$ be
      as in \rth{th:main1}. Then, up to a subsequence we have:\\[2mm]
{\rm (i)} $\lim_{\varepsilon\to0}
\frac{\ln\rho_\varepsilon}{\varepsilon}=\lim_{\varepsilon\to0}
\frac{\rho_\varepsilon-1}{\varepsilon}=\Phi_0$ in
$C^m_{\text{loc}}(\overline{\Omega}\setminus\{a_1,\ldots,a_N\})$,
$\forall m\ge 1$, where
$\Phi_0$ is the solution of
\begin{equation}
\label{eq:101}
\left\{
\begin{aligned}
  \Delta\Phi_0&=2\pi\sum_{j=1}^N d_j\delta_{a_j}\text{ in }\Omega,\\
 \Phi_0&=0 \text{ on }\partial\Omega.
\end{aligned}
\right. 
\end{equation}
{\rm (ii)} 
\begin{equation}
  \label{eq:102}
\lim_{\varepsilon\to0} E_\varepsilon(u_\varepsilon)-\frac{2\pi D}{\varepsilon}=d_{H^{1/2}}^2(g,\mathcal{H}_D(\partial\Omega)).
\end{equation}
{\rm (iii)} The configurations of points ${\bf a}=(a_1,\ldots,a_N)$ and
degrees ${\bf d}=(d_1,\ldots,d_N)$  realize the minimum in
\eqref{eq:96}. 
\end{theorem}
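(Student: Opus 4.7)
The strategy is to exploit the modulus/phase decomposition $u_\varepsilon = \rho_\varepsilon v_\varepsilon$, $v_\varepsilon := u_\varepsilon/|u_\varepsilon|$, together with the substitution $\Psi_\varepsilon := \varepsilon^{-1}\ln\rho_\varepsilon$. The Euler--Lagrange equation for the modulus, $\Delta\rho_\varepsilon = \varepsilon^2\rho_\varepsilon|\nabla v_\varepsilon|^2$, translates into the Poisson equation
$$\Delta\Psi_\varepsilon = \varepsilon|\nabla v_\varepsilon|^2 - \varepsilon|\nabla\Psi_\varepsilon|^2\quad\text{in }\Omega,\qquad \Psi_\varepsilon|_{\partial\Omega}=0,$$
and the energy rewrites cleanly as $E_\varepsilon(u_\varepsilon) = \int_\Omega \rho_\varepsilon^2(|\nabla v_\varepsilon|^2 + |\nabla\Psi_\varepsilon|^2)$. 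For (i), on any compact $K\Subset\overline\Omega\setminus\{a_1,\ldots,a_N\}$, \rth{th:main1} gives $v_\varepsilon\to u_*$ in $C^m(K)$ with $\rho_\varepsilon\to 1$, so the forcing $\varepsilon|\nabla v_\varepsilon|^2$ is $O(\varepsilon)$ on $K$. A barrier/comparison argument (using $\Psi_\varepsilon|_{\partial\Omega}=0$ against auxiliary approximations of $\Phi_0$) delivers a uniform $L^\infty$ bound for $\Psi_\varepsilon$ on $K$, and elliptic bootstrap then yields $C^m_{\text{loc}}$ subsequential convergence $\Psi_\varepsilon\to\Phi$ where $\Phi$ is harmonic on $\overline\Omega\setminus\{a_j\}$ with $\Phi|_{\partial\Omega}=0$. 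To identify $\Phi=\Phi_0$ one must show $\Phi$ carries a logarithmic singularity of weight exactly $d_j$ at each $a_j$: testing the Poisson equation against a cutoff $\zeta$ supported near $a_j$ with $\zeta(a_j)=1$, the left side converges to $\int\Phi\Delta\zeta$, while the right-hand side captures the sharp core concentration $\varepsilon(|\nabla v_\varepsilon|^2 - |\nabla\Psi_\varepsilon|^2)\rightharpoonup 2\pi\sum_j d_j\delta_{a_j}$, derived via a Pohozaev identity rooted in the one-dimensional radial minimization
$$\min\Big\{\textstyle\int_0^R\bigl(|\rho'|^2/\varepsilon^2 + d^2\rho^2/r^2\bigr)\,r\,dr : \rho(0)=0,\ \rho(R)=1\Big\}=d/\varepsilon,$$
attained by $\rho(r)=(r/R)^{d\varepsilon}$. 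The identity $\lim(\rho_\varepsilon-1)/\varepsilon = \Phi_0$ then follows from $\ln(1+x) = x + O(x^2)$ and the local uniform convergence $\rho_\varepsilon\to 1$.

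For the upper bound in (ii), given any admissible $({\bf b},{\bf e})\in(\Omega^{N'})^*\times\Z_+^{N'}$ with $\sum_j e_j = D$, choose $f^*\in C^1(\partial\Omega;S^1)$ of degree $D$ realizing $\tilde W({\bf b},{\bf e}) = W({\bf b},{\bf e},f^*)$; write $g\bar f^*=e^{i\psi}$ and let $\tilde\psi$ be its harmonic extension so that $\|\nabla\tilde\psi\|_{L^2}^2 = d_{H^{1/2}}^2(g,f^*)$. Letting $u_0^*$ be the canonical harmonic map with boundary $f^*$ and singularities $({\bf b},{\bf e})$, define
$$u^{\text{test}}_\varepsilon(z):=\eta_\varepsilon(z)\,e^{i\tilde\psi(z)}\,u_0^*(z),$$
where $\eta_\varepsilon\equiv 1$ outside $\bigcup_j B_{R_0}(b_j)$ and $\eta_\varepsilon(z)=(|z-b_j|/R_0)^{e_j\varepsilon}$ inside each $B_{R_0}(b_j)$. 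Using $|u_0^*|\equiv 1$, the sharp core cost $2\pi e_j/\varepsilon$ per vortex from the Euler minimization above, the BBH expansion \eqref{eq:184}, and integration by parts against \eqref{eq:185} to handle the cross-term, a direct calculation yields
$E_\varepsilon(u^{\text{test}}_\varepsilon) = 2\pi D/\varepsilon + W({\bf b},{\bf e},f^*) + \|\nabla\tilde\psi\|_{L^2}^2 + o(1) = 2\pi D/\varepsilon + W({\bf b},{\bf e},g) - \tilde W({\bf b},{\bf e}) + o(1)$, where the last equality invokes the quadratic expansion of $W$ in its boundary argument (a consequence of the explicit formula \eqref{eq:186}, and also used in the proof of \rprop{prop:W}) together with the fact that $f^*\in\mathcal{H}_D(\partial\Omega)$. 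Minimizing over $({\bf b},{\bf e})$ and using \eqref{eq:96} produces $\limsup_{\varepsilon\to 0}(E_\varepsilon(u_\varepsilon) - 2\pi D/\varepsilon) \le d_{H^{1/2}}^2(g,\mathcal{H}_D(\partial\Omega))$.

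For the matching lower bound, apply the rewritten energy and split $\Omega=\Omega_\lambda\cup\bigcup_j B_\lambda(a_j)$ for small $\lambda>0$. On $\Omega_\lambda$, the convergences $\rho_\varepsilon\to 1$, $v_\varepsilon\to u_*$ and $\Psi_\varepsilon\to\Phi_0$ from (i) and \rth{th:main1}, combined with \eqref{eq:184} and a parallel Green's-identity expansion for $\int_{\Omega_\lambda}|\nabla\Phi_0|^2$, produce the outer contribution featuring explicit $\ln(1/\lambda)$-divergent terms plus a remainder expressible through $W({\bf a},{\bf d},g)$ and the regular parts $R_0^D(a_j):=\lim_{x\to a_j}[\Phi_0(x)-d_j\ln|x-a_j|]$. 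In each core $B_\lambda(a_j)$, a rearrangement argument reducing to the one-dimensional minimization---exploiting the degree constraint $|\deg(v_\varepsilon,\partial B_\lambda(a_j))|=d_j$ together with the matching of the inner radial profile to the outer $\Phi_0$-asymptotic through $R_0^D(a_j)$---yields the sharp inner bound of leading order $2\pi d_j/\varepsilon$ plus a $\lambda$-dependent correction. Careful bookkeeping of all logarithmic divergences (which cancel between inner and outer contributions) together with the algebraic identification of the surviving constants as $W({\bf a},{\bf d},g) - \tilde W({\bf a},{\bf d})$ via \rprop{prop:W}, yields $\liminf_{\varepsilon\to 0}(E_\varepsilon(u_\varepsilon) - 2\pi D/\varepsilon) \ge W({\bf a},{\bf d},g) - \tilde W({\bf a},{\bf d}) \ge d_{H^{1/2}}^2(g,\mathcal{H}_D(\partial\Omega))$ (the final inequality from \eqref{eq:96}). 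The sandwich with the upper bound forces equality throughout, proving (ii) and showing $({\bf a},{\bf d})$ realize the infimum in \eqref{eq:96}, which is precisely (iii). The hardest step is the sharp core lower bound: extracting the leading $2\pi d_j/\varepsilon$ together with the precise $\lambda$-dependent correction that matches the outer renormalized expansion requires delicate control of the radial-angular interaction inside each vortex and crucially exploits the equipartition between $\int\rho_\varepsilon^2|\nabla v_\varepsilon|^2$ and $\int\rho_\varepsilon^2|\nabla\Psi_\varepsilon|^2$ that is forced by the Euler--Lagrange system.
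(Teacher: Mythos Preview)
Your overall architecture for (ii) and (iii) is close to the paper's: split $\Omega$ into an outer region $\Omega_\lambda$ and cores $B_\lambda(a_j)$, use the convergences of (i) on the outer part, and a coarea/radial lower bound on each core. However, there is a genuine gap in your treatment of (i), and it propagates into the rest.

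The crucial missing step in (i) is the a priori bound $|\Psi_\varepsilon|\le C_K$ on compact $K\Subset\overline\Omega\setminus\{a_j\}$ (equivalently, $1-\rho_\varepsilon\le C_K\varepsilon$). You assert this follows from a ``barrier/comparison argument'', but the equation $\Delta\Psi_\varepsilon=\varepsilon(|\nabla v_\varepsilon|^2-|\nabla\Psi_\varepsilon|^2)$ is nonlinear in $\nabla\Psi_\varepsilon$, so no standard comparison principle applies, and the only information available from \rth{th:main1} is $\int_\Omega|\nabla\Psi_\varepsilon|^2\rho_\varepsilon^2\le C/\varepsilon$, which gives no $L^\infty$ control. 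The paper obtains this bound by an entirely different and rather delicate route: it first shows that the Hopf differential $\chi_\varepsilon$ (see \rprop{prop:hopf}) is holomorphic with $\int_{\partial B_1}|\chi_\varepsilon|\le C$ via the Pohozaev identity \eqref{eq:Poho}, yielding $|\nabla\rho_\varepsilon|/\varepsilon\le C_\eta$ locally (\rlemma{lem:bound-grad}). Then, using this gradient bound together with Trudinger's inequality and the sharp coarea lower bound inside the cores, it proves $1-\rho_\varepsilon\le C_\eta\varepsilon$ (\rprop{prop:away} and \rprop{prop:rhoto1}). Your sketch skips this entire mechanism.

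The identification $c_j=d_j$ is also not established by your argument. You propose to show $\varepsilon(|\nabla v_\varepsilon|^2-|\nabla\Psi_\varepsilon|^2)\rightharpoonup 2\pi\sum_j d_j\delta_{a_j}$, but this is a \emph{difference} of two quantities each of order $1/\varepsilon$ on the cores; equipartition says the difference is $O(1)$, not that it equals $2\pi d_j$. Testing the equation against a cutoff only gives $2\pi c_j=\lim\varepsilon\int_{B_\lambda(a_j)}(|\nabla v_\varepsilon|^2-|\nabla\Psi_\varepsilon|^2)$, which is circular unless you have an independent computation of the right-hand side. The paper instead passes to the limit in the Hopf differential itself: since $\chi_\varepsilon\to\chi_*$ with $\chi_*$ holomorphic and bounded on $B_1$, comparing the singular expansions of $\Phi$ and $\varphi_*$ near each $a_j$ in the identity $4(\partial_z\Phi)^2=-4(\partial_z\varphi_*)^2+\chi_*$ forces $c_j^2=d_j^2$, and $\Phi\le 0$ gives the sign.

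For the upper bound in (ii), your construction is more laborious than necessary and leans on the identity $f^*\in\mathcal H_D(\partial\Omega)$, which is itself a consequence of \rprop{prop:W}. The paper's approach is cleaner: take $U_{\mathbf b,\varepsilon}=|{\cal B}_{\mathbf b}|^\varepsilon({\cal B}_{\mathbf b}/|{\cal B}_{\mathbf b}|)$, the explicit Blaschke minimizer with $E_\varepsilon(U_{\mathbf b,\varepsilon})=2\pi D/\varepsilon$ exactly, set $v_\varepsilon=e^{i\widetilde\psi}U_{\mathbf b,\varepsilon}$, and observe that the cross term $\int\bar\rho_\varepsilon^2\nabla\Theta\cdot\nabla\widetilde\psi$ vanishes identically because $\partial_\nu\Theta=-\partial_\tau\Phi_0=0$ on $\partial B_1$. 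This gives \eqref{eq:71} for every fixed $\varepsilon$, with no $o(1)$ error. For the lower bound, the paper also avoids any ``bookkeeping of logarithmic divergences'': writing $2\int_{\Omega_\lambda}|\nabla\Phi_0|^2=-4\pi\sum_j d_j(m_j-1)/\varepsilon+o(1)$ via Green's identity and (i), and combining with the core bound $E_\varepsilon(u_\varepsilon;B_\lambda(a_j))\ge 2\pi d_j m_j^2/\varepsilon$, the algebraic identity $m_j^2-2(m_j-1)=1+(m_j-1)^2$ makes all $\lambda$-dependence disappear directly.
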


    The main feature of \rth{th:main2} is that it
   provides a simple criterion for the computation of the location of the
   singularities of the limit $u_*$ (whence of $u_*$ itself) for a
   given boundary condition $g$. For simplicity we describe it for the
   case $\Omega=B_1$. What one has to do is to find the nearest point
   projection of $g$ on the set $\mathcal{H}_D(\partial B_1)$ (i. e.,
   the set of
   all the finite Blaschke products with $D$ factors)  with respect to
   the metric $d_{H^{1/2}}$. The singularities 
   of $u_*$ are precisely the zeros of the (extension to $B_1$ of the) Blaschke product which
   is the nearest point projection. To be exact, since we do not know whether the
   nearest point projection of $g$ is unique, what we can only say is that
   $u_*$ (any possible limit of a subsequence of $\{u_\varepsilon\}$) is one of the nearest point projections of $g$. In the
   special case where $g\in\mathcal{H}_D(\partial B_1)$ we can
   immediately say that the singularities of $u_*$ coincide the zeros of
   $g$. Actually, much more can be said in this case: for each
   $\varepsilon>0$ we have an
   explicit formula for the (unique) minimizer $u_\varepsilon$, whose
   zeros are the zeros of $g$, see \rprop{prop:B} in Subsection\,\ref{subsec:blaschke} for details.
\par Our original motivation to study the energy $E_\varepsilon$ came from  Ericksen's model for nematic liquid crystals with variable degree of
orientation \cite{Ericksen91}. In this model the nematic, confined to
a domain $\Omega\subset\R^3$,  is described
by a pair $(s,n)$ with $s:\Omega\to\left(-\frac{1}{2},1\right)$ and
$n:\Omega\to S^2$. In its simplest form the energy of the nematic is given by 
\begin{equation}
\label{eva}
{\mathcal {F}}_{E}(s,n)=\int_\Omega \left\{{k\left|\nabla s\right|}^2+s^2{\left|\nabla n\right|}^2+f(s)\right\},
\end{equation}
 for some smooth potential function $f:\left(-\frac{1}{2},1\right)\to\R_+$ that vanishes at a single point $s_0\in\left(-\frac{1}{2},1\right)$ and diverges at the endpoints of its interval of definition.  A further
 simplification of the model can be achieved once we realize that the field $s$ can be forced to deviate not too much from $s_0$ in $\Omega$ by setting $s|_{\partial\Omega}=s_0$ and taking advantage of the fact that variations of $s$ are penalized by the corresponding gradient term in \eqref{eva}. Here, larger values of the parameter $k$ would result in smaller values of $s-s_0$ in $\Omega$. Hence, we drop the potential $f$ in
 \eqref{eva}, similarly to what Ambrosio and Virga did in \cite{AmbrosioVirga} for different reasons (see
 also \cite{Virga90,mrv}).  A
 possible physical justification for dropping $f$ for polymeric liquid crystals was given in \cite{Virga90}. More recently, the same simplification was used in a numerical work \cite{NWW} when simulating nematic configurations arising within the Ericksen model. 
 
   To see the connection between the energy \eqref{eq:energy} to the
one in \eqref{eva} we follow F.H.\,Lin~\cite{lin89} by representing
the pair $(s,n)$ (in the case $s\ge0$)  by a single vector--valued function
   $u=sn$, where
    $u:\Omega\to\R^3$, so that $s=|u|$ while $n=u/|u|$ on the set
    $\{s>0\}$. This allows us to rewrite the energy in \eqref{eva}, in
    the case $f=0$, as:
 \begin{equation}\label{eq:Genergy}
G_k(u)=\int_{\Omega}\Big((k-1)|\nabla|u||^2+|\nabla u|^2\Big).
\end{equation}
Replacing the parameter $k$ with
$\varepsilon=(1/k)^{1/2}$ we get that $G_k(u)=E_\varepsilon(u)$ with
$E_\varepsilon$ given by \eqref{eq:energy}. Note however that in
\eqref{eq:energy} we consider $\R^2$--valued maps, while here the
physical model leads us to consider $\R^3$--valued maps; we will return
to this point below.

Associating with $u$ the map $w:\Omega\to{\mathcal C}_k^3$ given by
$w(x)=(u(x),\sqrt{k-1}\,|u|(x))$ (assuming $k>1$) we notice that   $w$
takes its values in a {\em
      circular cone} $ {\mathcal C}_k^3$ given by
    \begin{equation}
  \label{eq:204}
  {\mathcal C}_k^3=\{(y,t)\in\R^3\times\R\,:\,t=\sqrt{k-1}\,|y|\}\,.
\end{equation}
Moreover, $(k-1)|\nabla|u||^2+|\nabla u|^2=|\nabla w|^2$, whence
\begin{equation*}
  G_k(u)=\int_{\Omega}|\nabla w|^2.
\end{equation*}
Hence the energy \eqref{eva} without the potential term has another
   interpretation, leading to the study of minimizing harmonic
   maps taking values in  the circular cone ${\mathcal C}_k^3$.
   Properties
of these maps, in particular their regularity, were studied
extensively by Lin~\cite{lin89,lin91} and Hardt and
Lin~\cite{hl93}. Delicate regularity results for the analogous problem
when the cone ${\mathcal C}_k^3$ is replaced by a cone over the real projective plane
were obtained recently by Alper, Hardt and Lin~\cite{ahl}
and Alper~\cite{Alper}.

Replacing the parameter $k$ with
$\varepsilon=(1/k)^{1/2}$ we get that $G_k(u)=E_\varepsilon(u)$ with
$E_\varepsilon$ given by \eqref{eq:energy}. There are however two
special features in the problem that are not present in the standard
physical model. These are the assumptions that both the domain and the
target are two-dimensional, i.e., the extension $w$ of the original
$\R^2$--valued map $u$ takes values in 
$$ {\mathcal
  C}_k^2=\{(y,t)\in\R^2\times\R\,:\,t=\sqrt{k-1}\,|y|\}\,.
$$
These assumptions need some justification. In the Appendix, we present a possible physical
 motivation that led us to consider the present model, by showing that it
 can be derived as a thin film limit of a problem set in three
 dimensions. 

The fact that our model constrains the minimizers to
 take values in $\R^2$ and, hence, prevents them from \enquote{escaping to the third
    dimension} (see \cite{BBCH}) is crucial to our results. This is
  the reason why the
energy of the minimizers blows up as $\varepsilon\to0$ (equiv.\,$k\to\infty$) whenever the boundary condition has a nonzero degree, leading to emergence of singularities in the limit $\varepsilon\to0$. Indeed, the energy of
minimizers, for the same boundary condition but for ${\mathcal
  C}_k^3$--valued maps, remains bounded uniformly in $k$. The
extra dimension of the target also influences the uniqueness
issue. In fact, for minimizing harmonic maps taking values in the
upper hemisphere  $S^2_{+}$, 
uniqueness holds
\enquote{almost always} (and in particular whenever the domain
$\Omega$ has a connected boundary). This is a special case of a result
by Sandier and Shafrir~\cite{SS} that takes advantage of a certain convexity property of the
energy that holds thanks to the extra dimension. This result was strengthened and generalized---using a different
elegant technique---in a recent paper by Ignat, Nguyen, Slastikov and
Zarnescu~\cite{INSZ}. We expect a similar phenomenon to hold in our
setting, that is,  for a boundary condition taking values in $S^1$ and each $k>1$ 
there should be a unique $({\mathcal C}_k^3)_{+}$--valued minimizing
harmonic map. Here $({\mathcal C}_k^3)_{+}$ is defined by
\begin{equation*}
 ({\mathcal C}_k^3)_{+}=\{(y,t)\in\R^3\times\R\,:\,t=\sqrt{k-1}\,|y|,\,y_3\ge0\}\,,
\end{equation*}
and the $({\mathcal C}_k^3)_{+}$--valued minimizer is one of two minimizers for
the problem among ${\mathcal C}_k^3$--valued maps -- the other one
is $({\mathcal C}_k^3)_{-}$--valued, and is obtained from the first one
via reflection w.r.t.~the plane $\{y_3=0\}$. For the problem we are
concerned with in  this manuscript, that is for ${\mathcal C}_k^2$--valued
maps, the question of uniqueness is widely open. The only two modest
results we have in that respect are \rprop{prop:B} that  establishes
uniqueness, for every $\varepsilon>0$,  in the special case of a boundary condition which is a
Blaschke product and \rth{th:uniq} that establishes
uniqueness when the boundary condition has degree zero and
$\varepsilon$ is sufficiently small.

\par In a related work, Ignat, Nguyen, Slastikov and
Zarnescu~\cite{INSZ20} considered a two-dimensional problem on
a disk, involving the Landau--de Gennes model for nematic liquid crystals. They proved that, for sufficiently large radius and a symmetric boundary condition carrying a topological defect of degree $D/2$ (for $D$ even), there exist exactly two minimizers---both retaining the symmetry of
the boundary data---as well as non--minimizing critical points with $D-$fold symmetry. An interesting open problem for us is whether for $\varepsilon\ll1$ and the boundary data
$g(e^{i\theta})=e^{iD\theta}$ with $D\ge2$ there exist {\em
  local minimizers} of $E_\varepsilon$ having $D$ vortices, each of degree one, arranged in a symmetric pattern. If true, the techniques required to prove this fact in our case, will likely have to be significantly different from \cite{INSZ20} because harmonic maps for the limiting problem with an even $D$ in \cite{INSZ20} have bounded energy due to escape into the third dimension.

 The paper is organized as follows. In
 Section~\ref{sec:case-bound-cond} we examine the case $\deg
 g=0$. The rest of the paper is devoted to the case $\deg
 g\ge1$. Section~\ref{sec:prelim} contains some preliminary
 results needed for the proof of the main
 theorems. Section~\ref{sec:main1} is devoted to the proof of
 \rth{th:main1} while Section~\ref{sec:main2} is devoted to the proof of
 \rth{th:main2}. The proof of \rprop{prop:W} is given in
 	Section~\ref{sec:prop}. Finally, in the Appendix, we outline the dimension reduction argument that motivates our model.

\subsubsection*{Acknowledgments} 
The research of the first author
(DG) was supported in part by the NSF grant DMS-1615952.  The research
of the second author (IS) was supported by the 
Israel Science Foundation (Grant No. 894/18).
 We thank the anonymous referees for their valuable comments and suggestions that helped us  to improve the presentation of the
 manuscript.

\tableofcontents

\section{Boundary condition of degree zero}
\label{sec:case-bound-cond}
Throughout this section we suppose that $g:\partial\Omega\to S^1$ is a
smooth boundary condition of degree zero and let $g=e^{i\varphi_0}$.
Denote by $\widetilde\varphi_0$ the harmonic extension of $\varphi_0$
and let $u_0=e^{i\widetilde\varphi_0}$.
We mention in passing that the inequality $|u_\varepsilon(x)|\le1$
always holds in
$\Omega$ (regardless of the value of $\deg g$). 
 Indeed, otherwise we could
 reduce the energy by replacing $u_\varepsilon(x)$ by
 $u_\varepsilon(x)/|u_\varepsilon(x)|$ on the set $\{x\in\Omega :
 |u_\varepsilon(x)|>1\}$. An alternative argument that yields the same
  inequality uses the
 sub-harmonicity of the function $|u_\varepsilon|^2$, see
 \eqref{eq:41} below.

\subsection{Convergence of the minimizers}
\label{sec:conv-minim}
\begin{proposition}
	\label{prop:H1}
We have $u_\varepsilon\to u_0$ strongly in $H^1(\Omega)$ as $\varepsilon\to0$.
\end{proposition}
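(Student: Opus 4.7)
The plan is to use the fact that when $\deg g = 0$, the map $u_0 = e^{i\widetilde\varphi_0}$ is an $S^1$--valued competitor, so it has $|\nabla|u_0||\equiv 0$ and provides a uniform upper bound $E_\varepsilon(u_\varepsilon)\le E_\varepsilon(u_0)=\int_\Omega|\nabla\widetilde\varphi_0|^2$, independent of $\varepsilon$. Reading off the two terms in $E_\varepsilon$ separately yields the bounds $\|\nabla u_\varepsilon\|_{L^2}\le C$ and $\|\nabla|u_\varepsilon|\|_{L^2}=O(\varepsilon)$. Combined with $|u_\varepsilon|=1$ on $\partial\Omega$ and Poincar\'e's inequality, this forces $|u_\varepsilon|\to 1$ strongly in $H^1(\Omega)$.

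Next, I would extract a subsequence $u_{\varepsilon_n}\wc u^*$ weakly in $H^1$, strongly in $L^2$ (by Rellich) and a.e.\ (up to a further subsequence). Combined with $|u_{\varepsilon_n}|\to 1$, this gives $|u^*|=1$ a.e., and the trace $u^*=g$ on $\partial\Omega$ is preserved. Hence $u^*\in H^1_g(\Omega;S^1)$. Since $\Omega$ is simply connected and $\deg g=0$, the map $u^*$ lifts as $u^*=e^{i\psi}$ for some $\psi\in H^1(\Omega)$ with $\psi=\varphi_0$ on $\partial\Omega$ (modulo an additive multiple of $2\pi$), and $\int_\Omega|\nabla u^*|^2=\int_\Omega|\nabla\psi|^2$.

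Combining lower semicontinuity of the Dirichlet norm with the energy bound gives
\[
\int_\Omega|\nabla u^*|^2 \le \liminf_{n\to\infty}\int_\Omega|\nabla u_{\varepsilon_n}|^2 \le \liminf_{n\to\infty} E_{\varepsilon_n}(u_{\varepsilon_n}) \le \int_\Omega|\nabla\widetilde\varphi_0|^2=\int_\Omega|\nabla u_0|^2.
\]
Since $\widetilde\varphi_0$ is the unique minimizer of $\int_\Omega|\nabla\psi|^2$ under the boundary constraint, $u^*=u_0$ (this is where degree zero is crucial, as it allows a global lift and reduces the $S^1$--valued minimization to a strictly convex scalar problem). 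The chain of inequalities above then forces $\int_\Omega|\nabla u_{\varepsilon_n}|^2\to\int_\Omega|\nabla u_0|^2$, and weak convergence in $H^1$ together with convergence of norms upgrades to strong convergence. Uniqueness of the limit implies the whole family converges, not just a subsequence.

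The only mildly delicate point is the lifting step for $u^*$: since $H^1$ maps in two dimensions are not continuous, one must invoke the standard fact that $S^1$--valued $H^1$ maps on a simply connected planar domain admit an $H^1$ lift, which then matches $\varphi_0$ on $\partial\Omega$ in $H^{1/2}$ once one fixes the additive $2\pi\Z$ ambiguity using the boundary trace.
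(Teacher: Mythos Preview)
Your proof is correct and follows essentially the same route as the paper's: the energy comparison with $u_0$ gives a uniform $H^1$ bound and forces $\|\nabla|u_\varepsilon|\|_{L^2}=O(\varepsilon)$, weak compactness plus the identification of the limit as an $S^1$--valued map in $H^1_g$ yields $u^*=u_0$ by uniqueness of the $S^1$--valued Dirichlet minimizer, and norm convergence upgrades weak to strong. The only cosmetic differences are that the paper uses Poincar\'e with the mean (then identifies the constant via the trace) rather than Poincar\'e for $1-|u_\varepsilon|\in H^1_0$, and the paper leaves the lifting/uniqueness step implicit while you spell it out.
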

\begin{proof}
    Since
  \begin{equation}
    \label{eq:205}
    E_\varepsilon(u_\varepsilon)=\int_{\Omega}\Big(|\nabla u_\varepsilon|^2+\big(\frac{1}{\varepsilon^2}-1\big)|\nabla|u_\varepsilon||^2\Big)\le
E_\varepsilon(u_0)=\int_{\Omega}|\nabla u_0|^2\,,
  \end{equation}
  there is a subsequence
satisfying $u_{\varepsilon_n}\rightharpoonup u$ weakly in $H^1(\Omega)$. Therefore,
\begin{equation}\label{eq:uu0}
\int_\Omega |\nabla u|^2\leq\liminf_{n\to\infty}\int_{\Omega }|\nabla u_{\varepsilon_n}|^2\leq \int_{\Omega}|\nabla u_0|^2.
\end{equation}
Denoting $\rho_\varepsilon=|u_\varepsilon|$ and $\bar\rho_\varepsilon=\frac{1}{|\Omega|}\int_{\Omega}\rho_\varepsilon$ we have by Poincar\'e inequality:
	\begin{equation}
	\label{eq:poincare}
	\int_{\Omega} |\rho_\varepsilon-\bar\rho_\varepsilon|^2\le C\int_{\Omega}|\nabla\rho_\varepsilon|^2\to 0.
	\end{equation}
	Passing to a further subsequence we may assume that $\bar\rho_{\varepsilon_n}\to R$  for some constant $R\in[0,1]$ and then by \eqref{eq:poincare}, $\rho_{\varepsilon_n}\to R$ strongly in $H^1$. It follows that $1=\text{Tr}(\rho_{\varepsilon_n})\to\text{Tr}(R)=R$ in $L^2(\partial\Omega)$, whence $R=1$. 
	It follows that $u\in H^1_g(\Omega;S^1)$ and the inequality 
	$\int_{\Omega} |\nabla u|^2\leq \int_{\Omega} |\nabla u_0|^2$
        implies that $u=u_0$. From \eqref{eq:uu0} we conclude
        that $u_{\varepsilon_n}\to u_0$ strongly in $H^1$,
        and the full convergence
          $u_{\varepsilon}\overset{H^1}\longrightarrow u_0$ follows
          from the uniqueness of $u_0$.
          Going back to \eqref{eq:205} we  deduce that also
          \begin{equation}
            \label{eq:206}
            \lim_{\varepsilon\to0} \big(\frac{1}{\varepsilon^2}-1\big)\int_\Omega|\nabla|\rho_\varepsilon||^2=0\,.
          \end{equation}
        \qed
	\end{proof}
	\begin{proposition}
	\label{prop:uniform}
	Under the same assumptions as in \rprop{prop:H1} we have:
        $\rho_\varepsilon\to 1$ uniformly on $\Omega$. More precisely,
        we have
        \begin{equation}\label{eq:prop}
	1-\rho_\varepsilon(x) \le{C}\varepsilon\,,\quad\forall x\in\Omega.
	\end{equation}
      \end{proposition}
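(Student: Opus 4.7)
\medskip
\textbf{Plan.}

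The plan is to exploit the Euler--Lagrange equation for $\rho_\varepsilon=|u_\varepsilon|$ together with a maximum-principle comparison, thereby reducing the claim to a uniform bound on the phase gradient. Since $\deg g=0$ and $\Omega$ is simply connected, \rprop{prop:H1} guarantees that $u_\varepsilon$ is $H^1$-close to $u_0=e^{i\widetilde\varphi_0}$ for small $\varepsilon$, so one may lift $u_\varepsilon=\rho_\varepsilon e^{i\phi_\varepsilon}$ in $\Omega$ with $\phi_\varepsilon=\varphi_0$ on $\partial\Omega$. In these variables the energy decomposes as
\[
E_\varepsilon(u_\varepsilon)=\int_\Omega\Big(\tfrac{1}{\varepsilon^2}|\nabla\rho_\varepsilon|^2+\rho_\varepsilon^2|\nabla\phi_\varepsilon|^2\Big),
\]
and the Euler--Lagrange system separates into
\[
\Delta\rho_\varepsilon=\varepsilon^2\rho_\varepsilon|\nabla\phi_\varepsilon|^2\qquad\text{and}\qquad\operatorname{div}(\rho_\varepsilon^2\nabla\phi_\varepsilon)=0.
\]
The first identity re-confirms the subharmonicity of $\rho_\varepsilon$ and the bound $\rho_\varepsilon\le 1$.

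Setting $w_\varepsilon:=1-\rho_\varepsilon\ge 0$, one has $w_\varepsilon|_{\partial\Omega}=0$ and $-\Delta w_\varepsilon=\varepsilon^2\rho_\varepsilon|\nabla\phi_\varepsilon|^2$. If one establishes a uniform bound $\||\nabla\phi_\varepsilon|\|_{L^\infty(\Omega)}\le C$, then $-\Delta w_\varepsilon\le C\varepsilon^2$, and comparison with the solution of $-\Delta v=C\varepsilon^2$, $v|_{\partial\Omega}=0$, yields $w_\varepsilon\le C'\varepsilon^2\le C\varepsilon$, as claimed (in fact a stronger $O(\varepsilon^2)$ estimate). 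The uniform convergence $\rho_\varepsilon\to 1$ follows immediately.

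For the uniform $L^\infty$ bound on $|\nabla\phi_\varepsilon|$, I would write $\phi_\varepsilon=\widetilde\varphi_0+\psi_\varepsilon$ with $\psi_\varepsilon|_{\partial\Omega}=0$, and observe that $\psi_\varepsilon$ satisfies the divergence-form equation
\[
\operatorname{div}(\rho_\varepsilon^2\nabla\psi_\varepsilon)=-2\rho_\varepsilon\nabla\rho_\varepsilon\cdot\nabla\widetilde\varphi_0,
\]
whose right-hand side has $L^2$-norm $O(\varepsilon)$ by the energy bound $\|\nabla\rho_\varepsilon\|_{L^2}\le C\varepsilon$ (from \eqref{eq:206}) and the smoothness of $\widetilde\varphi_0$. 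Combined with uniform ellipticity of the coefficient $\rho_\varepsilon^2$ (which is bounded away from zero since $\rho_\varepsilon\to 1$ in $H^1$ by \rprop{prop:H1}), a bootstrap via $L^p$ and Schauder estimates gives uniform $C^{1,\alpha}(\overline\Omega)$ bounds on the pair $(\rho_\varepsilon,\phi_\varepsilon)$, and in particular the required control on $|\nabla\phi_\varepsilon|$.

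The main obstacle is the coupling between the two equations: Schauder estimates for the $\phi_\varepsilon$-equation require Hölder control of the coefficient $\rho_\varepsilon^2$, which in turn rests on $L^\infty$-control of $|\nabla\phi_\varepsilon|^2$ through the $\rho_\varepsilon$-equation. The key feature that breaks the circle is the explicit factor $\varepsilon^2$ in the $\rho_\varepsilon$-equation: even crude bounds on $|\nabla\phi_\varepsilon|$ produce strong estimates on $\rho_\varepsilon$, allowing the bootstrap to be initiated from the $L^2$ bounds on $(\nabla\rho_\varepsilon,\nabla\phi_\varepsilon)$ supplied by \rprop{prop:H1}.
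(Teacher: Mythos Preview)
Your proposal has a genuine circularity that it does not resolve. The entire argument rests on two inputs that are \emph{not} available from \rprop{prop:H1} alone:
\begin{itemize}
\item[(i)] the global lifting $u_\varepsilon=\rho_\varepsilon e^{i\phi_\varepsilon}$, which requires $\rho_\varepsilon>0$ throughout $\Omega$;
\item[(ii)] uniform ellipticity of the operator $\operatorname{div}(\rho_\varepsilon^2\nabla\,\cdot\,)$, i.e.\ $\rho_\varepsilon\ge c>0$ uniformly in $\varepsilon$.
\end{itemize}
Both of these are pointwise lower bounds on $\rho_\varepsilon$, and $H^1$-convergence $\rho_\varepsilon\to 1$ in two dimensions does \emph{not} supply them: $H^1(\Omega)\not\hookrightarrow L^\infty(\Omega)$. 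Your sentence ``uniform ellipticity of the coefficient $\rho_\varepsilon^2$ (which is bounded away from zero since $\rho_\varepsilon\to 1$ in $H^1$)'' is exactly the statement you are trying to prove. Without (ii), the equation for $\psi_\varepsilon$ is degenerate and no $L^p$ or Schauder estimate is available; without (i), the Euler--Lagrange system in the form you wrote does not even make sense globally. The factor $\varepsilon^2$ in the $\rho_\varepsilon$-equation helps only \emph{after} one has some control on $|\nabla\phi_\varepsilon|$, and that control cannot be initiated from the $H^1$ level alone.

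The paper confirms this: your outline is precisely the proof of \rth{th:deg-zero}, which the paper carries out \emph{after} \rprop{prop:uniform}, invoking it explicitly to justify the lifting (``By \rprop{prop:uniform}, for $\varepsilon$ small enough, any minimizer $u=u_\varepsilon$ can be written as $u=\rho_\varepsilon e^{i\varphi_\varepsilon}$''). For \rprop{prop:uniform} itself the paper uses a completely different, PDE-free argument: a dyadic energy-decay iteration. One first uses the strong $H^1$-convergence from \rprop{prop:H1} to find a small scale $r_0$ at which the energy is below a fixed threshold $\delta_0$; a Fubini/good-radius selection then gives a circle on which $u$ has small oscillation, hence degree zero, so one can lift \emph{on that circle} and use harmonic extensions of the boundary phase and modulus as competitors. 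This yields the decay $E_\varepsilon(u_\varepsilon;B_{r/2})\le \tfrac{4}{5}E_\varepsilon(u_\varepsilon;B_r)$, which one iterates; the resulting geometric series controls $1-\rho_\varepsilon(0)$ by $C\varepsilon$. Conformal invariance (M\"obius transformations) transports the estimate to every point. This argument never assumes $\rho_\varepsilon$ is bounded below and never uses the Euler--Lagrange system.
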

      \begin{remark}
	In \rth{th:deg-zero} below we will improve the estimate in
        \eqref{eq:prop} to $1-\rho_\varepsilon(x) \le C\varepsilon^2$.
\end{remark}

\begin{proof}[of \rprop{prop:uniform}]
 Throughout the proof we will denote by $C$ different generic constants whose
 value is independent of $\varepsilon$.
 Thanks to the conformal invariance, we may assume that $\Omega=B_1$. 
	By \rprop{prop:H1} and, in particular, \eqref{eq:206} we have 
	\begin{equation}
	\label{eq:fromp}
	(\nabla u_\varepsilon,\big({(1/\varepsilon^2)-1}\big)^{1/2}\nabla\rho_\varepsilon)\xrightarrow{L^2} (\nabla u_0,0). 
	\end{equation}
	Therefore, for any $\delta_0\in(0,1)$ we can find $r_0>0$ such that
	\begin{equation}\label{eq:r0}
	\int_{B_{r_0}(x_0)\cap\Omega} |\nabla u_\varepsilon|^2+((1/\varepsilon^2)-1)|\nabla\rho_\varepsilon|^2\le\delta_0,~\forall x_0\in\Omega.
	\end{equation}
	For reasons to become clear later we fix a value of $\delta_0>0$ satisfying 
	\begin{equation}\label{eq:delta0}
	\delta_0<\frac{1}{4\pi}.
	\end{equation}
	In the sequel we shall suppress for simplicity the subscript
        $\varepsilon$ and write for short, $u=u_\varepsilon$,
        $\rho=\rho_\varepsilon$, etc. Recall that we also have
	 \begin{equation}\label{eq:bound-E}
	\int_{\Omega} |\nabla u|^2+\big((1/\varepsilon^2)-1\big)|\nabla\rho|^2\le C_0:=\int_\Omega |\nabla u_0|^2.
	\end{equation}
	
	We first consider the case $x_0=0$.
	By \eqref{eq:r0} we may  choose $r_0^\prime\in(r_0/2,r_0)$ such that
	 \begin{equation}\label{eq:fubini}
	 \int_{\partial B_{r_0^\prime}}  |\nabla
         u|^2+\big((1/\varepsilon^2)-1\big)|\nabla\rho|^2\leq
         \frac{2}{{r_0^\prime}}\int_{B_{r_0}\setminus B_{r_0/2}} |\nabla u|^2+\big((1/\varepsilon^2)-1\big)|\nabla\rho|^2\leq \frac{2\delta_0}{r_0^\prime}. 
	 \end{equation}
	 In particular, we deduce from \eqref{eq:fubini} that 
	 \begin{equation}\label{eq:close}
	 |u(x_1)-u(x_2)|\leq \int_{\partial B_{r_0^\prime}}\!\!|\nabla
         u|\leq (2\pi r_0^\prime)^{1/2}\left(\int_{\partial
             B_{r'_0}}\!\!|\nabla u|^2\right)^{1/2}\leq \big(4\pi\delta_0\big)^{1/2},~\forall x_1,x_2\in\partial B_{r_0^\prime}.
	 \end{equation}
	 Similarly,
	 \begin{equation}\label{eq:mod-close}
	 |\rho(x_1)-\rho(x_2)|\leq \big(4\pi\delta_0\big)^{1/2}\varepsilon,\quad\forall x_1,x_2\in\partial B_{r_0^\prime}.
	 \end{equation}
	 
	 We  next define the radial function 
	 \begin{equation}\label{eq:bar-rho}
	 \bar\rho(r)=\frac{1}{2\pi r}\int_{\partial B_r}\rho\,d\tau,\quad r\in(0,1].
	 \end{equation}
	 By \eqref{eq:bound-E} we have
	 \begin{equation}
	 \label{eq:721}
	 C_0\varepsilon^2\ge \int_{B_1\setminus B_{r_0^\prime}} |\nabla\rho|^2\ge\int_{B_1\setminus B_{r_0^\prime}} |\nabla\bar\rho|^2=
	 2\pi \int_{r'_0}^1 \left|\frac{d\bar\rho}{dr}\right|^2r\,dr\ge \frac{2\pi(1-\bar\rho(r_0^\prime))^2}{\ln(1/r_0^\prime)}\,,
	 \end{equation}
	 whence 
	 \begin{equation}\label{eq:rho-r'}
	 1-\bar\rho(r_0^\prime)\leq \left\{ \left(\frac{C_0\varepsilon^2}{2\pi}\right)\ln(1/r_0^\prime)\right\}^{1/2}\,.
	 \end{equation}
	 By \eqref{eq:mod-close} and \eqref{eq:rho-r'} we get that
         \begin{equation}
           \label{eq:174}
           \rho=1+O(\varepsilon)\text{ on }\partial B_{r_0^\prime},
         \end{equation}
         while  \eqref{eq:close} and \eqref{eq:delta0} imply that 
	 \begin{equation}
           \label{eq:203}
	 |u(x_1)-u(x_2)|<1\;\text{ on }\partial B_{r_0^\prime}.
	 \end{equation}
	 In particular, it follows from \eqref{eq:174}--\eqref{eq:203}
         that the image of $u/|u|\Big|_{\partial B_{r_0^\prime}}$ is
         contained {\em strictly} in $S^1$ (for sufficiently small $\varepsilon$), whence $\deg(u/|u|,\partial B_{r_0^\prime})=0$. Therefore we may write $u=\rho e^{i\varphi}$ on $\partial B_{r_0^\prime}$.
	 \par Denote by $\widetilde\rho$ and $\widetilde\varphi$ the harmonic
         extensions of $\rho$ and $\varphi$, respectively, from
         $\partial B_{r_0^\prime}$ to $B_{r_0^\prime}$. Recall that
         in dimension two any harmonic function $h$ satisfies:
	 \begin{equation}\label{eq:H2}
	 \int_{B_R} |\nabla h|^2\le R\int_{\partial B_R} \left|\frac{\partial h}{\partial\tau}\right|^2\,.
	 \end{equation}
	 Using \eqref{eq:H2} and the fact that $\rho^2\ge1/2$ on
         $\partial B_{r_0^\prime}$ (for small $\varepsilon$) thanks to
         \eqref{eq:174}, we obtain:
	   \begin{multline}
	   \label{eq:312}
	  \int_{B_{r_0/2}} |\nabla u|^2+\left(\frac{1}{\varepsilon^2}-1\right)|\nabla\rho|^2\le  \int_{B_{r'_0}} |\nabla u|^2+\left(\frac{1}{\varepsilon^2}-1\right)|\nabla\rho|^2\\
	   \le \int_{B_{r'_0}} \widetilde\rho^2|\nabla\widetilde\varphi|^2+\frac{1}{\varepsilon^2}|\nabla\widetilde\rho|^2\le 
	   r'_0\int_{\partial B_{r_0^\prime}}\left|\frac{\partial\varphi}{\partial\tau}\right|^2+\frac{1}{\varepsilon^2}\left|\frac{\partial\rho}{\partial\tau}\right|^2\\
	   \le 2r'_0\int_{\partial B_{r_0^\prime}}\rho^2\left|\frac{\partial\varphi}{\partial\tau}\right|^2+\frac{1}{\varepsilon^2}\left|\frac{\partial\rho}{\partial\tau}\right|^2
	   \le 2r'_0\int_{\partial B_{r_0^\prime}}|\nabla u|^2+\left(\frac{1}{\varepsilon^2}-1\right)|\nabla\rho|^2\\ \le 4\int_{B_{r_0}\setminus B_{r_0/2}}|\nabla u|^2+\left(\frac{1}{\varepsilon^2}-1\right)|\nabla\rho|^2\,,
	  \end{multline}
	  where in the last inequality we used \eqref{eq:fubini}.
	  An immediate consequence of \eqref{eq:312} is
	  \begin{equation}\label{eq:8/9}
	  \int_{B_{r_0/2}} |\nabla u|^2+\big(\frac{1}{\varepsilon^2}-1\big)|\nabla\rho|^2\le\frac{4}{5}\int_{B_{r_0}} |\nabla u|^2+\big(\frac{1}{\varepsilon^2}-1\big)|\nabla\rho|^2\le \frac{4\delta_0}{5}\,.
	  \end{equation}
	  \par Next, we set $r_1=r_0/2$ and choose, as in \eqref{eq:fubini}, $r'_1\in(r_1/2,r_1)$ such that 
	  \begin{equation}
	  \label{eq:fubini1}
	   \int_{\partial B_{r_1^\prime}}  |\nabla u|^2+\big(\frac{1}{\varepsilon^2}-1\big)|\nabla\rho|^2\leq \frac{2}{{r_1^\prime}}\int_{B_{r_1}\setminus B_{r_1/2}} |\nabla u|^2+\big(\frac{1}{\varepsilon^2}-1\big)|\nabla\rho|^2\leq \frac{4}{5}\cdot\frac{2\delta_0}{r_1^\prime}\,. 
	  \end{equation}
	 Similarly to \eqref{eq:close}--\eqref{eq:mod-close} we get
	 \begin{equation}
	 \label{eq:close1}
	 \begin{aligned}
	 |u(x_1)-u(x_2)|&\leq \Big(4\pi\delta_0\cdot(4/5)\Big)^{1/2}\\
	 |\rho(x_1)-\rho(x_2)|&\leq \Big(4\pi\delta_0\cdot(4/5)\Big)^{1/2}\varepsilon
	 \end{aligned}
	 ~,\quad \forall x_1,x_2\in\partial B_{r_1^\prime}.
\end{equation}
By a similar argument to the one used in \eqref{eq:721} we get 	 
\begin{equation}
\label{eq:722}
\delta_0\varepsilon^2\ge 
\int_{B_{r'_0}\setminus B_{r_1^\prime}} |\nabla\rho|^2
\ge \int_{B_{r'_0}\setminus B_{r_1^\prime}} |\nabla\bar\rho|^2 
\ge \frac{2\pi}{\ln(r'_0/r'_1)}|\bar\rho(r'_0)-\bar\rho(r'_1)|^2\geq \frac{2\pi}{\ln 4}|\bar\rho(r'_0)-\bar\rho(r'_1)|^2,
\end{equation}
whence
\begin{equation}\label{eq:723}
|\bar\rho(r'_0)-\bar\rho(r'_1)|\le \left(\frac{\delta_0\ln 4}{2\pi}\right)^{1/2}\varepsilon\,.
\end{equation}
Using the harmonic extensions of $\rho$ and $\varphi$ from $\partial B_{r'_1}$ to $B_{r'_1}$, as in \eqref{eq:312}, we obtain, analogously to \eqref{eq:8/9}:
\begin{equation}\label{eq:8/9-1}
\int_{B_{r_1/2}} |\nabla u|^2+\big(\frac{1}{\varepsilon^2}-1\big)|\nabla\rho|^2\le\frac{4}{5}\int_{B_{r_1}} |\nabla u|^2+\big(\frac{1}{\varepsilon^2}-1\big)|\nabla\rho|^2\le \left(\frac{4}{5}\right)^2\delta_0\,.
\end{equation}
\par We continue by defining recursively $r_j=r_{j-1}/2=r_0/2^j$ and then choose $r'_j\in(r_{j+1},r_j)$ satisfying
  \begin{equation}
\label{eq:fubini-j}
\int_{\partial B_{r_j^\prime}}  |\nabla u|^2+\big(\frac{1}{\varepsilon^2}-1\big)|\nabla\rho|^2\leq \frac{2}{{r_j^\prime}}\int_{B_{r_j}\setminus B_{r_j/2}} |\nabla u|^2+\big(\frac{1}{\varepsilon^2}-1\big)|\nabla\rho|^2\leq \Big(\frac{2}{r_j^\prime}\Big)\Big(\frac{4}{5}\Big)^j\delta_0\,. 
\end{equation}
Analogously to \eqref{eq:close1} we get
\begin{equation}
\label{eq:closej}
\begin{aligned}
|u(x_1)-u(x_2)|&\leq \Big(4\pi\delta_0\Big)^{1/2}\cdot\Big(\frac{4}{5}\Big)^{j/2}\\
|\rho(x_1)-\rho(x_2)|&\leq \Big(4\pi\delta_0\Big)^{1/2}\cdot\Big(\frac{4}{5}\Big)^{j/2}\varepsilon
\end{aligned}
~,\quad \forall x_1,x_2\in\partial B_{r_j^\prime}.
\end{equation}
The argument used to obtain \eqref{eq:723} yields
\begin{equation}\label{eq:724}
|\bar\rho(r'_{j-1})-\bar\rho(r'_j)|\le \left(\frac{\delta_0\ln 4}{2\pi}\right)^{1/2}\cdot\Big(\frac{4}{5}\Big)^{(j-1)/2}\varepsilon\,.
\end{equation}
Combining \eqref{eq:rho-r'} with \eqref{eq:724} gives
\begin{multline}\label{eq:bound}
1-\bar\rho(r'_j)\leq 1-\bar\rho(r'_0)+\sum_{i=1}^j|\bar\rho(r'_{i-1})-\bar\rho(r'_{i})|\\
\leq \left\{ \left(\frac{C_0}{2\pi }\right)\ln(2/r_0)\right\}^{1/2}\varepsilon+\left\{\sum_{i=1}^{j}\left(\frac{4}{5}\right)^{(i-1)/2}\right\}\Big(\frac{\delta_0\ln 4}{2\pi}\Big)^{1/2}\varepsilon\le C\varepsilon\,.
\end{multline}
Letting $j$ go to infinity in \eqref{eq:bound} yields $1-\rho(0)\leq C\varepsilon$, which is \eqref{eq:prop} for $x=0$.
\par Finally we consider the case $x\in B_1\setminus\{0\}$. First,
denote by $d_h$ the hyperbolic metric in $B_1$ with the convention
that $d_h(0,x)=\tanh^{-1}|x|$  (it is
half of the hyperbolic distance commonly used in Geometry).
 In particular, Let $D_r(x)$ denote hyperbolic disk of radius $r$, centered at $x$, that is 
\[ 
D_r(x)=\{y\in B_1:\,d_h(x,y)<r\}.
 \]
 For a given $x\ne 0$ and $r_0$ as in \eqref{eq:r0} we let $\widetilde
 r_0=\tanh^{-1} r_0$, so that $D_{\tilde r_0}(x)=M_x\big(D_{\tilde
   r_0}(0)\big)=M_x(B_{r_0})$, where $M_{x}$ denotes the M\"obius transformation sending $0$
 to $x$.  It is easy to see that $D_{\tilde r_0}(x)=B_{s}(y)$, for some $y\in B_1$ and $s<r_0$. 
  By \eqref{eq:r0} and the conformal invariance of the energy we obtain that $v:=u\circ M_{x}$ satisfies
  \begin{equation*}
  \int_{B_{r_0}} |\nabla v|^2+\big(\frac{1}{\varepsilon^2}-1\big)|\nabla|v||^2= \int_{B_{s}(y)} |\nabla u|^2+\big(\frac{1}{\varepsilon^2}-1\big)|\nabla\rho|^2\le \delta_0\,.
  \end{equation*}
  By the first part of the proof, $1-|u(x)|=1-|v(0)|\le C\varepsilon$ and \eqref{eq:prop} follows.\qed
\end{proof}	
In the next theorem we improve the estimate \eqref{eq:prop}.
\begin{theorem}
	\label{th:deg-zero}
	For a smooth boundary condition $g=e^{i\varphi_0}$ of degree zero we have:
\begin{equation}
\label{eq:thd0}
\|u_\varepsilon-u_0\|_{C^m(\overline{\Omega})}\le{C_m}\varepsilon^2\,,\quad\forall m\ge1.
\end{equation}
\end{theorem}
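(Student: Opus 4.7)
The plan is to linearize the Euler--Lagrange system for $u_\varepsilon$ in polar form around $u_0$ and then bootstrap via elliptic regularity. Since \rprop{prop:uniform} gives $\rho_\varepsilon\ge 1/2$ on $\overline{\Omega}$ for $\varepsilon$ small, $u_\varepsilon$ admits a smooth lift $u_\varepsilon = \rho_\varepsilon e^{i\varphi_\varepsilon}$ with $\varphi_\varepsilon|_{\partial\Omega}=\varphi_0$. In polar coordinates $E_\varepsilon(\rho,\varphi)=\int_\Omega \varepsilon^{-2}|\nabla\rho|^2+\rho^2|\nabla\varphi|^2$, so the Euler--Lagrange equations read
\[
\div(\rho_\varepsilon^2\nabla\varphi_\varepsilon)=0,\qquad \Delta\rho_\varepsilon=\varepsilon^2\rho_\varepsilon|\nabla\varphi_\varepsilon|^2.
\]
Setting $s_\varepsilon:=1-\rho_\varepsilon$ and $\psi_\varepsilon:=\varphi_\varepsilon-\widetilde\varphi_0$ (both vanish on $\partial\Omega$) and using $\Delta\widetilde\varphi_0=0$, the system becomes
\[
-\Delta s_\varepsilon=\varepsilon^2(1-s_\varepsilon)|\nabla\widetilde\varphi_0+\nabla\psi_\varepsilon|^2,\qquad \div(\rho_\varepsilon^2\nabla\psi_\varepsilon)=2\rho_\varepsilon\,\nabla s_\varepsilon\cdot\nabla\widetilde\varphi_0.
\]

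From \rprop{prop:uniform} and the energy inequality $\varepsilon^{-2}\|\nabla\rho_\varepsilon\|_{L^2}^2\le C$ one has $\|s_\varepsilon\|_{L^\infty}+\|s_\varepsilon\|_{H^1_0}\le C\varepsilon$, and testing the $\psi_\varepsilon$-equation against $\psi_\varepsilon$ (using $\rho_\varepsilon^2\ge 1/4$, Cauchy--Schwarz and Poincar\'e) gives $\|\psi_\varepsilon\|_{H^1_0}\le C\varepsilon$. To promote the $H^1$-bound to a uniform gradient bound I rewrite the $\psi$-equation in non-divergence form,
\[
\Delta\psi_\varepsilon=\frac{2}{\rho_\varepsilon}\,\nabla s_\varepsilon\cdot\nabla\varphi_\varepsilon,
\]
and exploit the two-dimensional Sobolev embedding $H^1\hookrightarrow L^p$ for every $p<\infty$: alternating $L^p$-regularity on the two equations (with zero Dirichlet data) finitely many times yields $\|s_\varepsilon\|_{W^{2,p}}+\|\psi_\varepsilon\|_{W^{2,p}}\le C_p\varepsilon$, so by Morrey $\|\nabla s_\varepsilon\|_{L^\infty}+\|\nabla\psi_\varepsilon\|_{L^\infty}\le C\varepsilon$; in particular $\|\nabla\varphi_\varepsilon\|_{L^\infty}\le C$ uniformly in $\varepsilon$.

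With $\|\nabla\varphi_\varepsilon\|_{L^\infty}$ under control, the right-hand side of the $s_\varepsilon$-equation is $O(\varepsilon^2)$ in $L^\infty$, so $L^p$-regularity gives $\|s_\varepsilon\|_{W^{2,p}}\le C_p\varepsilon^2$ for every $p<\infty$, and in particular $\|\nabla s_\varepsilon\|_{L^\infty}\le C\varepsilon^2$. Plugging this back into the non-divergence form of the $\psi_\varepsilon$-equation yields $\|\Delta\psi_\varepsilon\|_{L^\infty}\le C\varepsilon^2$, hence $\|\psi_\varepsilon\|_{W^{2,p}}\le C_p\varepsilon^2$. A standard Schauder bootstrap on the coupled system, using smoothness of $\partial\Omega$ and of $\varphi_0$, then upgrades these to $\|s_\varepsilon\|_{C^m(\overline\Omega)}+\|\psi_\varepsilon\|_{C^m(\overline\Omega)}\le C_m\varepsilon^2$ for every $m\ge 1$, and the decomposition $u_\varepsilon-u_0=-s_\varepsilon e^{i\varphi_\varepsilon}+e^{i\widetilde\varphi_0}(e^{i\psi_\varepsilon}-1)$ together with $|e^{i\psi}-1|\le|\psi|$ gives \eqref{eq:thd0}. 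The main technical hurdle is the intermediate bootstrap: the $s$- and $\psi$-equations are coupled through quadratic gradient terms and in two dimensions $H^1$ narrowly misses $L^\infty$, so obtaining the uniform bound on $\nabla\varphi_\varepsilon$ requires a simultaneous $L^p$-iteration on both equations with careful tracking of the $\varepsilon$-dependence.
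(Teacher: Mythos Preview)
Your overall strategy---polar decomposition $u_\varepsilon=\rho_\varepsilon e^{i\varphi_\varepsilon}$, the coupled Euler--Lagrange system for $(s_\varepsilon,\psi_\varepsilon)$, and an elliptic bootstrap---is exactly the paper's, and once $\|\nabla\varphi_\varepsilon\|_{L^p}\le C_p$ is in hand for some $p>2$ the remainder of your argument (the $\varepsilon^2$ gain from the $s$-equation, feeding it back into the $\psi$-equation, then induction on $j$ in $W^{j,p}$) coincides with the paper's proof.

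The gap is precisely the step you flag as the ``main technical hurdle''. Starting only from $\nabla s_\varepsilon,\nabla\psi_\varepsilon\in L^2$, the non-divergence form $\Delta\psi_\varepsilon=\frac{2}{\rho_\varepsilon}\nabla s_\varepsilon\cdot\nabla\varphi_\varepsilon$ has right-hand side merely in $L^1$, and in two dimensions $\Delta^{-1}$ maps $L^1$ only into $W^{1,q}$ for $q<2$; the same happens with the $s$-equation since $|\nabla\varphi_\varepsilon|^2\in L^1$. The iteration is therefore stuck at the critical exponent $p=2$, and no finite number of alternations improves it---$\varepsilon$-smallness of the norms does not help, because the obstruction is the Sobolev exponent, not the size of the constants. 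The paper avoids the difficulty by writing the phase equation in the alternative divergence form
\[
\Delta\psi_\varepsilon=\div\big((1-\rho_\varepsilon^2)\nabla\varphi_\varepsilon\big),\qquad \psi_\varepsilon=0\ \text{on}\ \partial\Omega,
\]
and applying the Calder\'on--Zygmund estimate $\|\nabla\psi_\varepsilon\|_p\le A_p\|(1-\rho_\varepsilon^2)\nabla\varphi_\varepsilon\|_p$. The crucial input here is the $L^\infty$-smallness $\|1-\rho_\varepsilon^2\|_\infty\le C\varepsilon$ from \rprop{prop:uniform} (which you invoke only to justify the polar lift): it yields $\|\nabla\psi_\varepsilon\|_p\le C\varepsilon\big(1+\|\nabla\psi_\varepsilon\|_p\big)$ for every fixed $p>2$, and absorption gives $\|\nabla\varphi_\varepsilon\|_p\le C_p$ in a single step, with no iteration needed. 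After that point your proof and the paper's are identical.
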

\begin{proof}
 Note that for $v$ with no zeros, i.e., of the form $v=\rho e^{i\varphi}$, the energy in $\eqref{eq:energy}$ takes the form
  \begin{equation}
  \label{eq:333}
 E_\varepsilon(v)=\int_{\Omega} \rho^2|\nabla\varphi|^2+\frac{1}{\varepsilon^2}|\nabla\rho|^2.
 \end{equation}
  By \rprop{prop:uniform}, for $\varepsilon$ small enough,  any
  minimizer $u=u_\varepsilon$ can be written as $u=\rho_\varepsilon e^{i\varphi_\varepsilon}=\rho e^{i\varphi}$. It follows from \eqref{eq:333} that the Euler-Lagrange system for $\rho$ and $\varphi$ reads
\begin{equation}\label{eq:EL}
\left\{
\begin{aligned}
&\Div(\rho^2\nabla\varphi)=0,\\
&-\Delta\rho+\varepsilon^2\rho|\nabla\varphi|^2=0.
\end{aligned}
\right.
\end{equation}  	
We write $\varphi=\widetilde{\varphi}_0+\psi$ which allows us to write the equation satisfied by $\psi$ as
\begin{equation}\label{eq:psi}
\left\{
\begin{aligned}
&\Delta\psi=\Div((1-\rho^2)\nabla\varphi)\text{ in }\Omega,\\
&\psi=0\text{ on }\partial\Omega.
\end{aligned}
\right.
\end{equation}
For any $p>2$ we have by standard elliptic estimates and \eqref{eq:prop},
\begin{equation*}
\|\nabla\psi\|_{p}\le C\|(1-\rho^2)\nabla\varphi\|_p\le
C\varepsilon\|\nabla\varphi\|_p\leq C\varepsilon(1+\|\nabla\psi\|_p).
\end{equation*}  	
It follows that $\|\nabla\psi\|_p\le C\varepsilon$, whence 
\begin{equation}\label{eq:nab-phi}
\|\nabla\varphi\|_p\le C_p\,,\quad\forall p>2\,.
\end{equation}
Plugging \eqref{eq:nab-phi} in the second equation in \eqref{eq:EL},
yields $\|\Delta\rho\|_p\leq C_p\varepsilon^2$, $\forall p>1$, whence,
since $1-\rho=0$ on $\partial\Omega$, 
\begin{equation}\label{eq:rho1}
\|1-\rho\|_{W^{2,p}}\le C_p\varepsilon^2\,,\quad\forall p>1.
\end{equation}
Using the first equation in \eqref{eq:EL} we obtain that 
\begin{equation}
\label{eq:Dpsi}
-\Delta\psi=-\Delta\varphi=\frac{2}{\rho}\left(\nabla\rho\cdot\nabla\varphi\right)\,,
\end{equation}
so we can now conclude from \eqref{eq:nab-phi} and \eqref{eq:rho1} that
$\|\Delta\varphi\|_p\leq C_p\varepsilon^2$, $\forall p>1$. Hence by
elliptic estimates we get that also
\begin{equation}\label{eq:psi1}
\|\psi\|_{W^{2,p}}\le C_p\varepsilon^2\,,\quad\forall p>1.
\end{equation}
Next we claim that:
\begin{equation}\label{eq:jp}
\|\psi\|_{W^{j,p}}+\|1-\rho\|_{W^{j,p}}\leq C_{j,p}\varepsilon^2\,,\quad\forall p>1,\forall j\ge 2.
\end{equation}
 We prove \eqref{eq:jp} by induction on $j$. For $j=2$ the result holds by \eqref{eq:rho1} and \eqref{eq:psi1}. Assuming the result holds for $j$, we see from \eqref{eq:Dpsi} that $\|\Delta\psi\|_{W^{j-1,p}}\le C\varepsilon^2$, implying that $\|\psi\|_{W^{j+1,p}}\le C_{j,p}\varepsilon^2$.
 Similarly, the estimate for $\|1-\rho\|_{W^{j+1,p}}$ follows from the second equation in \eqref{eq:EL}.
 Finally, \eqref{eq:thd0} follows from \eqref{eq:jp} and Sobolev
 embeddings. \qed
\end{proof}
\subsection{Uniqueness of the minimizers for small $\varepsilon$}
\begin{theorem}
  \label{th:uniq}
If $g$ is a smooth boundary condition of degree zero then there exists
$\varepsilon_0>0$ such that for all $\varepsilon\le \varepsilon_0$ the minimizer $u_\varepsilon$ for $E_\varepsilon$
over $H^1_g(\Omega)$ is unique.
\end{theorem}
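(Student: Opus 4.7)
My approach is to prove uniqueness by showing that, for $\varepsilon$ small, the energy $E_\varepsilon$ is \emph{strictly convex} along the straight-line path in the $(\rho,\varphi)$-coordinates between any two putative minimizers. The key ingredients are the quantitative $C^m$-closeness of every minimizer to $u_0=e^{i\widetilde\varphi_0}$ supplied by \rth{th:deg-zero} and the singular weight $\varepsilon^{-2}$ in front of $|\nabla\rho|^2$, which will be used to absorb a sign-indefinite cross term via Poincar\'e.

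First I set up a uniform polar representation. By \rth{th:deg-zero}, any minimizer $u_\varepsilon$ satisfies $\|u_\varepsilon-u_0\|_{C^m}\le C_m\varepsilon^2$, so in particular $|u_\varepsilon|\ge 1/2$ on $\overline\Omega$ for $\varepsilon$ small. Since $\Omega$ is simply connected and $\deg g=0$, we may write $u_\varepsilon=\rho_\varepsilon e^{i\varphi_\varepsilon}$ globally, choosing the branch with $\|\rho_\varepsilon-1\|_{C^m}+\|\varphi_\varepsilon-\widetilde\varphi_0\|_{C^m}\le C_m\varepsilon^2$. Given two minimizers $u^{(i)}=\rho_i e^{i\varphi_i}$, $i=1,2$, define
\[
\rho_s=\rho_1+s\eta,\quad \varphi_s=\varphi_1+s\psi,\quad u_s=\rho_s e^{i\varphi_s},\quad s\in[0,1],
\]
with $\eta=\rho_2-\rho_1$ and $\psi=\varphi_2-\varphi_1$ both in $H^1_0(\Omega)$. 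Then $u_s\in H^1_g(\Omega)$ for all $s$, and uniformly in $s$ and in (small) $\varepsilon$ we have $\rho_s\ge 1/2$ and $\|\nabla\varphi_s\|_\infty\le M$ for some $M$ independent of $\varepsilon$.

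Using \eqref{eq:333}, a direct computation gives
\[
\frac{d^2}{ds^2}E_\varepsilon(u_s)=\int_\Omega\Bigl(2\eta^2|\nabla\varphi_s|^2+8\rho_s\eta\,\nabla\varphi_s\cdot\nabla\psi+2\rho_s^2|\nabla\psi|^2+\tfrac{2}{\varepsilon^2}|\nabla\eta|^2\Bigr).
\]
By Young's inequality $|8\rho_s\eta\,\nabla\varphi_s\cdot\nabla\psi|\le\rho_s^2|\nabla\psi|^2+16\eta^2|\nabla\varphi_s|^2$, combined with $|\nabla\varphi_s|\le M$ and the Poincar\'e inequality $\|\eta\|_{L^2}^2\le C_P\|\nabla\eta\|_{L^2}^2$, we deduce
\[
\frac{d^2}{ds^2}E_\varepsilon(u_s)\ge\int_\Omega\rho_s^2|\nabla\psi|^2+\bigl(\tfrac{2}{\varepsilon^2}-14M^2C_P\bigr)|\nabla\eta|^2\ge c\bigl(\|\nabla\eta\|_{L^2}^2+\|\nabla\psi\|_{L^2}^2\bigr)
\]
for some $c>0$, provided $\varepsilon\le\varepsilon_0$ is small enough. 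If $u^{(1)}\ne u^{(2)}$ then $(\eta,\psi)\not\equiv(0,0)$, so $h(s):=E_\varepsilon(u_s)$ is strictly convex on $[0,1]$. Since $h(0)=h(1)=\min E_\varepsilon$, strict convexity forces $h(s)<\min E_\varepsilon$ for some $s\in(0,1)$, contradicting $u_s\in H^1_g(\Omega)$. Hence $u^{(1)}=u^{(2)}$.

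The main obstacle is handling the sign-indefinite cross term $8\rho_s\eta\,\nabla\varphi_s\cdot\nabla\psi$: a priori its negative contribution could dominate the positive quadratic terms. The singular weight $\varepsilon^{-2}|\nabla\eta|^2$ rescues the estimate, but only because $\|\nabla\varphi_s\|_\infty$ is bounded \emph{independently of $\varepsilon$}, a fact that relies precisely on the quantitative $C^1$-closeness $\|u_\varepsilon-u_0\|_{C^1}\le C\varepsilon^2$ from \rth{th:deg-zero} (which also keeps $\rho_s$ uniformly bounded away from zero along the straight-line interpolation, so that the polar form of the energy \eqref{eq:333} is valid throughout).
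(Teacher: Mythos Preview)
Your proof is correct and takes a genuinely different route from the paper's. The paper (following \cite{cm99}) writes $v/u=\eta e^{i\psi}$ \emph{multiplicatively}, computes $E_\varepsilon(v)-E_\varepsilon(u)$ exactly, and then uses the Euler--Lagrange equations \eqref{eq:EL} (testing the second equation against $\rho(\eta^2-1)$ and the first against $\psi$) to cancel several cross terms, leaving an expression that is controlled by Poincar\'e and the $\varepsilon^{-2}$ weight. You instead interpolate \emph{additively} in $(\rho,\varphi)$-coordinates and prove strict convexity of $s\mapsto E_\varepsilon(u_s)$ via the second derivative, bounding the single sign-indefinite cross term by Young and Poincar\'e. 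Your argument is more elementary in that it never invokes the Euler--Lagrange system; the paper's approach exploits the critical-point structure to obtain an exact identity (equation \eqref{eq:4}), which yields a somewhat sharper explicit threshold for $\varepsilon_0$. Both proofs rest on exactly the same two inputs: the uniform $C^1$-bound $\|\nabla\varphi\|_\infty\le c_0$ coming from \rth{th:deg-zero}, and the fact that the $\varepsilon^{-2}|\nabla\rho|^2$ term can absorb, via Poincar\'e, any term of the form $C\int_\Omega(\text{modulus deviation})^2$.
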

\begin{proof}
We follow  an argument from \cite{cm99}.
By \rth{th:deg-zero} there exists $\varepsilon_1$ such that for $\varepsilon\le \varepsilon_1$ any
minimizer $u=u_\varepsilon$ satisfies $1/2\le|u|\le1$. Let
$v=v_\varepsilon$ be any minimizer
for  $\varepsilon\le \varepsilon_1$, whence also  $1/2\le|v|\le1$. We may then write
$u=\rho e^{i\varphi}$ and also $w:=v/u=\eta e^{i\psi}$ with
$1/2\le\eta\le 2$ in $\Omega$,  $\eta=1$ on $\partial\Omega$ and
$\psi=0$ on $\partial\Omega$. A direct computation yields
\begin{multline}
  \label{eq:3}
  E_\varepsilon(v)-E_\varepsilon(u)=\int_{\Omega}
  \rho^2(\eta^2-1)|\nabla\varphi|^2+\rho^2\eta^2(2\nabla\varphi\cdot\nabla\psi+|\nabla\psi|^2)\\
 +\frac{1}{\varepsilon^2}\int_{\Omega} (\eta^2-1)|\nabla\rho|^2+(\rho^2|\nabla\eta|^2+2\rho\eta\nabla\rho\cdot\nabla\eta)\,.
\end{multline}
Next we multiply the second equation in \eqref{eq:EL} by
$\rho(\eta^2-1)$ and integrate over $\Omega$ to find
\begin{equation}
  \label{eq:1}
  \left(\int_\Omega(\eta^2-1)|\nabla\rho|^2+2\rho\eta\nabla\rho\cdot\nabla\eta\right)+\varepsilon^2\int_\Omega\rho^2(\eta^2-1)|\nabla\varphi|^2=0\,.
\end{equation}
 Substituting \eqref{eq:1} in \eqref{eq:3} gives
 \begin{equation}
   \label{eq:2}
   E_\varepsilon(v)-E_\varepsilon(u)=\int_{\Omega}\varepsilon^{-2}\rho^2|\nabla\eta|^2+2\rho^2\eta^2\nabla\varphi\cdot\nabla\psi+\rho^2\eta^2|\nabla\psi|^2.
 \end{equation}
On the other hand, multiplying the first equation in \eqref{eq:EL} by
$\psi$ and integrating, we conclude that
$$
\int_\Omega\rho^2\nabla\varphi\cdot\nabla\psi=0\,.
$$
Plugging it in 
\eqref{eq:2} yields that 
\begin{equation}
  \label{eq:4}
   E_\varepsilon(v)-E_\varepsilon(u)=\int_{\Omega}\varepsilon^{-2}\rho^2|\nabla\eta|^2+2\rho^2(\eta^2-1)\nabla\varphi\cdot\nabla\psi+\rho^2\eta^2|\nabla\psi|^2.
\end{equation}
By \rth{th:deg-zero} we have $\|\nabla\varphi\|_\infty\le c_0$ for
some constant $c_0>0$. Hence, by the Cauchy-Schwarz inequality we have
\begin{equation}
  \label{eq:6}
\begin{aligned}
  \left|\int_\Omega
    2\rho^2(\eta^2-1)\nabla\varphi\cdot\nabla\psi\right|&\leq
  4c_0^2\int_\Omega(\eta^2-1)^2+\frac{1}{4}\int_\Omega
  |\nabla\psi|^2\\
&\le 4c_0^2\int_\Omega(\eta^2-1)^2+\int_\Omega
  \rho^2\eta^2|\nabla\psi|^2\,.
 \end{aligned}
\end{equation}
Applying Poincar\'e inequality to the function $\eta^2-1\in
H^1_0(\Omega)$ yields
\begin{equation}
  \label{eq:5}
  \int_\Omega(\eta^2-1)^2\le C_P\int_\Omega|2\eta\nabla\eta|^2\le 16C_p\int_\Omega|\nabla\eta|^2.
\end{equation}
Combining \eqref{eq:6}--\eqref{eq:5} with \eqref{eq:4} yields
\begin{equation*}
  E_\varepsilon(v)-E_\varepsilon(u)\ge\int_{\Omega}(\rho^2/\varepsilon^2-64C_Pc_0^2)|\nabla\eta|^2\ge \frac{1-256\varepsilon^2c_0^2C_P}{4\varepsilon^2}\int_\Omega|\nabla\eta|^2.
\end{equation*}
It follows from the above and our assumption $E_\varepsilon(v)=E_\varepsilon(u)$, that for
$\varepsilon<\frac{1}{16c_0\sqrt{C_P}}$ we must have $|\nabla\eta|=0$ in $\Omega$, whence
$\eta\equiv 1$. Plugging it in \eqref{eq:4} we finally get that
$\psi\equiv0$ and the equality $v=u$ follows. \qed
\end{proof}
\begin{remark}
  We do not know whether the uniqueness result of \rth{th:uniq} holds
  without the assumption that $\varepsilon$ is sufficiently small.
\end{remark}

\section{Boundary condition of degree $D\ge1$: preliminary estimates}
\label{sec:prelim}
 In this section we consider the case of boundary condition of nonzero
 degree. Without loss of generality we assume that $\deg g=D\ge 1$. We
 continue to assume that $\Omega$ is a smooth, bounded and simply
 connected domain in $\R^2$; whenever convenient, we will suppose that
 $\Omega$ is the unit disc $B_1=B_1(0)$. 
 
\subsection{Minimization within the radial class}
Consider the case $\Omega=B_R=B_R(0)$ and $g(Re^{i\theta})=e^{iD\theta}$ with $D\ge1$.
Define
\begin{equation*}
V:=\{f\in H^1_{loc}(0,R)\,:\,\sqrt{r}f',\frac{f}{\sqrt{r}}\in L^2(0,R), f(R)=1\}.
\end{equation*}
For $f\in V$ we have $fe^{iD\theta}\in H^1_g(\Omega)$ and 
\begin{equation*}
E_\varepsilon(fe^{id\theta})=2\pi \int_0^R \left(\frac{f'^2}{\varepsilon^2}+\frac{D^2}{r^2}f^2\right)r\,dr\,.
\end{equation*}
We first solve the minimization problem under the restriction that the
maps satisfy the above \enquote{$D$-radial symmetry} ansatz.  
\begin{lemma}
	\label{lem:rad}
	For every $D\ge1$ and $\varepsilon>0$ we have
\begin{equation*}
\min_{f\in V}E_\varepsilon(fe^{iD\theta})=\frac{2\pi D}{\varepsilon}
\end{equation*}	
and the unique minimizer is 
\begin{equation}
\label{eq:bar}
\bar f_{\varepsilon,D}(r)=\left(\frac{r}{R}\right)^{D\varepsilon}.
\end{equation}
\end{lemma}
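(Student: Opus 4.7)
\medskip

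My plan is to obtain the lower bound by a pointwise AM--GM inequality inside the integrand and then identify the unique equality case, which will be the claimed power function $\bar f_{\varepsilon,D}$.

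First, I rewrite
\[
E_\varepsilon(fe^{iD\theta})=2\pi\int_0^R\Big(\frac{(f')^2 r}{\varepsilon^2}+\frac{D^2 f^2}{r}\Big)dr,
\]
and apply AM--GM pointwise:
\[
\frac{(f')^2 r}{\varepsilon^2}+\frac{D^2 f^2}{r}\ge \frac{2D}{\varepsilon}|f'||f|\ge \frac{2D}{\varepsilon}f'f=\frac{D}{\varepsilon}(f^2)',
\]
with equality in the first inequality iff $r^2(f')^2=D^2\varepsilon^2 f^2$ and in the second iff $ff'\ge 0$ a.e.

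Next, I would integrate this bound from $0$ to $R$, which yields
\[
\int_0^R\Big(\frac{(f')^2 r}{\varepsilon^2}+\frac{D^2 f^2}{r}\Big)dr\ge \frac{D}{\varepsilon}\big(f(R)^2-\lim_{a\to 0^+}f(a)^2\big).
\]
The remaining task is to show that the boundary term at $0$ vanishes. The point is that for any $f\in V$ one has $ff'\in L^1(0,R)$ by Cauchy--Schwarz,
\[
\int_0^R |ff'|\,dr\le \Big(\int_0^R \frac{f^2}{r}\,dr\Big)^{1/2}\Big(\int_0^R (f')^2 r\,dr\Big)^{1/2}<\infty,
\]
so $f^2(r)=f^2(R)-\int_r^R (f^2)'$ has a finite limit as $r\to 0^+$. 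If this limit were a positive number $c^2$, then $f^2\ge c^2/2$ in a neighborhood of $0$, contradicting $\int_0^R f^2/r\,dr<\infty$. Hence $\lim_{r\to 0^+}f(r)=0$, and with $f(R)=1$ the integrated lower bound becomes $D/\varepsilon$, proving $E_\varepsilon(fe^{iD\theta})\ge 2\pi D/\varepsilon$.

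Finally, I would identify the minimizer from the equality conditions. Equality in AM--GM forces $rf'=\pm D\varepsilon f$; the minus sign yields $f(r)=Cr^{-D\varepsilon}$, which violates $f/\sqrt r\in L^2$, so only $rf'=D\varepsilon f$ remains. Solving this ODE with $f(R)=1$ gives $\bar f_{\varepsilon,D}(r)=(r/R)^{D\varepsilon}$, and a direct computation confirms $E_\varepsilon(\bar f_{\varepsilon,D}e^{iD\theta})=2\pi D/\varepsilon$. The second equality condition $ff'\ge 0$ is automatic for this positive increasing function. The only real subtlety in the whole argument is justifying the vanishing boundary term at $r=0$, which I address by the Cauchy--Schwarz argument above; everything else is routine.
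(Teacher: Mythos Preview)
Your proof is correct and follows essentially the same approach as the paper: a pointwise AM--GM (equivalently, $a^2+b^2\ge 2ab$) on the integrand, integration, and identification of the equality case as the ODE $rf'=D\varepsilon f$. Your version is in fact slightly more careful than the paper's, which tacitly uses $\lim_{r\to0^+}f(r)=0$ without justification, whereas you supply the Cauchy--Schwarz argument showing $(f^2)'\in L^1(0,R)$ and then rule out a nonzero limit via the integrability of $f^2/r$.
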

\begin{proof}
	First we note that for every $f\in V$ the following pointwise inequality  holds on $(0,R)$:
	\begin{equation}
	\label{eq:cs}
	 \frac{rf'^2}{\varepsilon^2}+\frac{D^2f^2}{r}=\left(\frac{\sqrt{r}f'}{\varepsilon}\right)^2+\left(\frac{Df}{\sqrt{r}}\right)^2\geq \frac{2}{\varepsilon}ff'D.
	\end{equation}
	Integration of \eqref{eq:cs} over the interval $(0,R)$ yields $E_\varepsilon(fe^{iD\theta})\geq \frac{2\pi D}{\varepsilon}$.
	\par Equality holds in \eqref{eq:cs} iff
	\begin{equation}
	\label{eq:equality}
	 \sqrt{r}f'=Df\varepsilon/\sqrt{r}\text{  a.e. on } (0,R).
	\end{equation}
	A simple integration of \eqref{eq:equality} yields $f=\bar f_{\varepsilon,D}$ as given in \eqref{eq:bar}.\qed
      \end{proof}
 We remark that the special solutions given by \eqref{eq:bar} are
 well-known in the literature. They appeared for example in \cite{mrv}
 as part of the study of axially symmetric minimizers. In the next
 subsection, see \rcor{cor:rad} below, 
 we will prove that $\bar f_{\varepsilon,D}e^{iD\theta}$ is the
 minimizer for $E_\varepsilon$ over the whole class $H^1_g(B_R)$ (for
 $g(Re^{i\theta})=e^{iD\theta}$), i.e., without assuming the $D$-radial symmetry ansatz.  
 \subsection{Asymptotic behavior of the energy}
   In this subsection we will prove the following asymptotic formula
   for the energy: $E_\varepsilon(u_\varepsilon)=\frac{2\pi
   D}{\varepsilon}+O(1)$. We start with the  lower bound.
\begin{proposition}
	\label{prop:lb}
        Assume $g:\Omega\to S^1$ has degree $D>0$. Then we have
        \begin{equation}
\label{eq:lb}
E_\varepsilon(u)\geq \frac{2\pi D}{\varepsilon},~\forall u\in H^1_g(\Omega).
\end{equation}
\end{proposition}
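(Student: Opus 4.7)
The plan is to reduce the bound to a topological lower bound on the integral of the Jacobian via a pointwise Cauchy--Schwarz/AM--GM inequality. Write $\rho=|u|$ and, on the open set $\{\rho>0\}$, represent $u=\rho e^{i\varphi}$ locally. A direct computation gives
\[
|\nabla u|^2+\Big(\frac{1}{\varepsilon^2}-1\Big)|\nabla\rho|^2=\rho^2|\nabla\varphi|^2+\frac{1}{\varepsilon^2}|\nabla\rho|^2
\qquad\text{and}\qquad
\Jac(u)=\rho\bigl(\partial_x\rho\,\partial_y\varphi-\partial_y\rho\,\partial_x\varphi\bigr).
\]
Applying AM--GM pointwise,
\[
\rho^2|\nabla\varphi|^2+\frac{1}{\varepsilon^2}|\nabla\rho|^2\ge\frac{2\rho}{\varepsilon}|\nabla\rho||\nabla\varphi|\ge\frac{2}{\varepsilon}|\Jac(u)|.
\]
On $\{\rho=0\}$ one has $\nabla u=0$ and hence $\Jac(u)=0$ a.e.\ (by Stampacchia's lemma applied to $u_1$ and $u_2$), so the inequality $|\nabla u|^2+(\varepsilon^{-2}-1)|\nabla\rho|^2\ge 2\varepsilon^{-1}|\Jac(u)|$ extends a.e.\ to all of $\Omega$. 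Integrating,
\[
E_\varepsilon(u)\ge \frac{2}{\varepsilon}\int_\Omega|\Jac(u)|.
\]

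Next, I lower bound $\int_\Omega|\Jac(u)|$ topologically. Since $\Jac(u)=\p_x(u_1\p_y u_2)-\p_y(u_1\p_x u_2)$, a density argument (smooth approximation in $H^1$ preserving traces, plus the trace theorem) gives the Stokes identity
\[
\int_\Omega \Jac(u)\,dx=\int_{\partial\Omega} u_1\,du_2=\int_{\partial\Omega} g_1\,dg_2.
\]
Writing $g=e^{i\phi}$ on $\partial\Omega$ with $\phi$ having total increment $2\pi D$, a direct computation yields $\int_{\partial\Omega} g_1\,dg_2=\int_0^{2\pi D}\cos^2\phi\,d\phi=\pi D$. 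Hence
\[
\int_\Omega|\Jac(u)|\ge\Big|\int_\Omega \Jac(u)\Big|=\pi D,
\]
and combining with the previous step gives $E_\varepsilon(u)\ge 2\pi D/\varepsilon$.

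The main technical obstacle is rigorously justifying the pointwise identities and the Stokes computation at $H^1$ regularity, where $u$ need not be continuous and the polar lifting $u=\rho e^{i\varphi}$ is only local. The first issue is handled by the $H^1$ chain rule and the identity $|\nabla u|^2-|\nabla|u||^2=\rho^2|\nabla(u/|u|)|^2$ valid a.e.\ on $\{\rho>0\}$, together with the vanishing of $\nabla u$ on $\{u=0\}$. The second is handled by approximating $u$ by smooth maps with the same boundary trace, noting that $\Jac$ is a continuous quadratic form on $H^1$ (and that $\int_\Omega \Jac$ depends only on $g$). Equality in AM--GM would force $\rho|\nabla\varphi|=\varepsilon^{-1}|\nabla\rho|$, which, once combined with the radial ansatz in \rlemma{lem:rad}, will characterize the extremals and pave the way for the sharpness statement in \rcor{cor:rad}.
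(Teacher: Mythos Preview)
Your proof is correct and reaches the same bound through a closely related but distinct route. Both arguments begin with the identical pointwise AM--GM step, yielding $E_\varepsilon(u)\ge (2/\varepsilon)\int|u|\,|\nabla|u||\,|\nabla(u/|u|)|$. From there the paper applies the coarea formula to slice by level sets $\gamma_t=\{|u|=t\}$ and uses that the winding number of $u/|u|$ on each $\gamma_t$ equals $D$, so $\int_{\gamma_t}|\nabla(u/|u|)|\ge 2\pi D$; integrating over $t\in(0,1)$ gives the result. You instead drop one further inequality to the Jacobian, $|u|\,|\nabla|u||\,|\nabla(u/|u|)|\ge|\Jac(u)|$, and evaluate $\int_\Omega\Jac(u)=\pi D$ via Stokes and the boundary degree. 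Your argument is more direct and avoids the coarea formula and Sard-type regularity of level sets; on the other hand, the paper's coarea decomposition is reused verbatim later (see \eqref{eq:15}, \eqref{eq:123}, \eqref{eq:159}) to bound the energy on discs where $|u|\ne 1$ on the boundary, a situation in which the global Stokes identity gives less immediate control. Both proofs are cleanest after an initial reduction to smooth $u$ by density, which the paper states at the outset; adopting that would let you dispense with the chain-rule and Stampacchia digressions.
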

\begin{proof}
 By density of smooth maps in $H^1_g(\Omega)$ it
  suffices to prove \eqref{eq:lb} for {\em smooth} $u$.
Applying the Cauchy-Schwarz inequality gives
\begin{equation}\label{eq:CS}
E_\varepsilon(u)=\int_{\Omega\cap\{u\neq 0\}} \!\!\!\Big(\varepsilon^{-2}|\nabla|u||^2+|u|^2|\nabla (u/|u|)|^2\Big)\geq 
\frac{2}{\varepsilon}\int_{\Omega\cap\{u\neq 0\}} \!\!\!|\nabla|u|||u||\nabla (u/|u|)|.
\end{equation}
For each $t\in(0,1)$ set $\gamma_t=\{x\in\Omega\,:\,|u|=t\}$. For
almost every $t\in(0,1)$, $\gamma_t$ is a union of a finite number closed smooth
curves, $C_1,C_2,\ldots,C_m$. For such $t$, the boundary of the set
$\Omega_t:=\{x\in\Omega\,:\,|u(x)|>t\}$ consists of
$\partial\Omega\cup\bigcup_{k=1}^mC_k$, whence, the total winding
number of the map $u/|u|:\gamma_t\to S^1$ 
around the origin equals $D$. Hence,
\begin{equation}
  \label{eq:44}
  \int_{\gamma_t} |\nabla (u/|u|)|\ge \big| \int_{\gamma_t} (u/|u|)\wedge
  (u/|u|)_\tau \,d\tau\big|=2\pi D\,.
\end{equation}
The direction chosen for the unit vector $\tau$ was
  dictated by the requirement  
   that $(\nu,\tau)$
will be a direct frame, where $\nu$ denotes the inward unit normal to $\Omega_t$.
Applying the coarea formula to the R.H.S.~of \eqref{eq:CS}, using \eqref{eq:44}, yields
\begin{equation}
	\label{eq:111}
E_\varepsilon(u)\ge \frac{2}{\varepsilon}\int_0^1 \int_{\gamma_t} t|\nabla (u/|u|)|\,d\tau\,dt\geq\frac{4\pi D}{\varepsilon}\int_0^1 t\,dt=\frac{2\pi D}{\varepsilon},
\end{equation}
and \eqref{eq:lb} follows.\qed
\end{proof}
\begin{corollary}
\label{cor:energy}	
We  have 
\begin{equation}
  \label{eq:97}
  \frac{2\pi D}{\varepsilon} \le E_\varepsilon(\ue)\le \frac{2\pi D}{\varepsilon}+C.
\end{equation}
\end{corollary}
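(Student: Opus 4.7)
The lower bound in \eqref{eq:97} is immediate from \rprop{prop:lb} applied to $u=u_\varepsilon$. For the upper bound, the strategy is to construct, for each $\varepsilon>0$, an explicit comparison map $v_\varepsilon\in H^1_g(\Omega)$ satisfying $E_\varepsilon(v_\varepsilon)\le \frac{2\pi D}{\varepsilon}+C$ with $C$ independent of $\varepsilon$, and then invoke the minimality of $u_\varepsilon$.

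The construction splits $\Omega$ into a small inner disc carrying a single ``artificial'' vortex of degree $D$ and an outer annular region where the map is $S^1$-valued. Fix any $a\in\Omega$ and $R>0$ such that $\overline{B_R(a)}\subset\Omega$, and let $(r,\theta)$ denote polar coordinates centered at $a$. On $B_R(a)$ set $v_\varepsilon(x)=(r/R)^{D\varepsilon}e^{iD\theta}$, which is exactly the minimizer supplied by \rlemma{lem:rad} for the disc $B_R(a)$ with boundary data $e^{iD\theta}$; its contribution to $E_\varepsilon(v_\varepsilon)$ is therefore exactly $\frac{2\pi D}{\varepsilon}$.

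On the annular region $A:=\Omega\setminus\overline{B_R(a)}$, I would choose $v_\varepsilon$ to be any fixed smooth $S^1$-valued map $w$ (independent of $\varepsilon$) satisfying $w=g$ on $\partial\Omega$ and $w(x)=e^{iD\theta(x)}$ on $\partial B_R(a)$. Such $w$ exists because both boundary components of $A$ carry data of degree $D$, so with the natural orientations the total boundary degree vanishes and a smooth $S^1$-valued extension exists. Explicitly, setting $w_0(x):=((x-a)/|x-a|)^D$ on $\overline\Omega\setminus\{a\}$, the map $g\bar w_0$ has degree zero on $\partial\Omega$, hence equals $e^{i\psi_0}$ for some smooth $\psi_0:\partial\Omega\to\R$; extending $\psi_0$ smoothly to $\overline A$ with $\psi_0\equiv 0$ on $\partial B_R(a)$ and setting $w:=e^{i\psi_0}w_0$ does the job. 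Since $|w|\equiv 1$ on $A$, one has $|\nabla|w||\equiv 0$ there, so the contribution to $E_\varepsilon(v_\varepsilon)$ from $A$ reduces to $\int_A|\nabla w|^2$, a finite constant depending only on $g$, $a$, $R$.

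The two pieces match continuously across $\partial B_R(a)$ (both equal $e^{iD\theta}$ there), so $v_\varepsilon\in H^1_g(\Omega)$; summing the two contributions gives $E_\varepsilon(v_\varepsilon)\le \frac{2\pi D}{\varepsilon}+C$, which by minimality yields the upper bound. The only non-computational point is the existence of the smooth $S^1$-valued extension on the annulus, which is handled by the degree-matching construction above; everything else reduces to \rlemma{lem:rad} on the inner disc and to the observation that $|\nabla|w||\equiv 0$ on the outer annulus.
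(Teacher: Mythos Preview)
Your argument is correct and follows essentially the same approach as the paper: the lower bound comes from \rprop{prop:lb}, and for the upper bound one uses the radial minimizer $\bar f_{\varepsilon,D}(r)e^{iD\theta}$ from \rlemma{lem:rad} on an inner disc and a fixed $S^1$-valued extension to $g$ on the outer annulus. You give more detail than the paper on why the $S^1$-valued extension on the annulus exists (via the degree-zero lifting of $g\bar w_0$), but the strategy is identical.
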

\begin{proof}
	The lower bound follows from \rprop{prop:lb}. W.l.o.g.~we may
        assume that $0\in\Omega$. Fix any $R>0$ such that
        $B_R\subset\Omega$. For each $\varepsilon$ let $U_\varepsilon$
        be equal to
        $\bar f_{\varepsilon,D}(r)e^{iD\theta}$ in $B_R$ (see
        \eqref{eq:bar}) and complete it in $\Omega\setminus B_R$ by any $S^1$--valued smooth map which equals $e^{iD\theta}$ on $\partial B_R$ and $g$ on $\partial\Omega$. By  \rlemma{lem:rad} we have $E_\varepsilon(U_\varepsilon)\leq \frac{2\pi D}{\varepsilon}+C$.\qed
\end{proof}
\begin{corollary}
  \label{cor:rad}
	For $\Omega=B_R$ and $g(Re^{i\theta})=e^{iD\theta}$, the map
        $\bar f_{D}(r)e^{iD\theta}$, with
        $\bar f_{D}$ as in \eqref{eq:bar}, is a minimizer for $E_\varepsilon$ over $H^1_g(\Omega)$.
\end{corollary}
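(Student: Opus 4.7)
The plan is to combine \rlemma{lem:rad} with \rprop{prop:lb} to obtain the conclusion essentially for free. First I would observe that $\bar f_D(r) e^{iD\theta}$ belongs to $H^1_g(B_R)$ for the boundary condition $g(Re^{i\theta}) = e^{iD\theta}$, since $\bar f_{\varepsilon,D}(r) = (r/R)^{D\varepsilon}$ lies in the class $V$ defined at the start of Subsection~3.1 and satisfies $\bar f_{\varepsilon,D}(R) = 1$.

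Next, by the explicit computation in \rlemma{lem:rad}, we have
\begin{equation*}
E_\varepsilon(\bar f_D(r) e^{iD\theta}) = \frac{2\pi D}{\varepsilon}.
\end{equation*}
On the other hand, \rprop{prop:lb} provides the universal lower bound $E_\varepsilon(u) \geq \frac{2\pi D}{\varepsilon}$ valid for every $u \in H^1_g(B_R)$, since $\deg g = D > 0$. Chaining these two facts shows that $\bar f_D(r) e^{iD\theta}$ realizes the infimum of $E_\varepsilon$ over the full admissible class, not merely over the radial subclass $V$.

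There is no real obstacle here: the corollary is a direct consequence of the matching of the radial value to the general lower bound. The only subtlety worth flagging is that \rlemma{lem:rad} only established minimality within the restricted ansatz of $D$-radially symmetric maps, so the content of the corollary is precisely that the general Cauchy--Schwarz--plus--coarea argument underlying \rprop{prop:lb} is sharp and is saturated by the explicit radial profile. Accordingly, I would write the proof as a single short paragraph noting that $\bar f_D(r) e^{iD\theta} \in H^1_g(B_R)$ achieves the value $\frac{2\pi D}{\varepsilon}$ by \rlemma{lem:rad}, which coincides with the lower bound from \rprop{prop:lb}, hence it is a minimizer over $H^1_g(B_R)$.
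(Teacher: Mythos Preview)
Your proposal is correct and follows exactly the paper's own approach: the paper's proof is the single sentence ``This is an immediate consequence of \rlemma{lem:rad} and \rprop{prop:lb},'' which is precisely the matching of the radial energy value $2\pi D/\varepsilon$ from \rlemma{lem:rad} with the general lower bound from \rprop{prop:lb} that you describe.
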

\begin{proof}
 This is an immediate consequence of \rlemma{lem:rad} and \rprop{prop:lb}.\qed
\end{proof}
\begin{remark}
  From \rprop{prop:B} below it follows that  $\bar
  f_{D}(r)e^{iD\theta}$ is the unique minimizer over $H^1_g(\Omega)$ for the boundary
  condition $g(Re^{i\theta})=e^{iD\theta}$.
\end{remark}
\begin{remark}
  The combination of the proof of \rprop{prop:lb}  with the result of \rcor{cor:energy} demonstrates that the principle of
  \enquote{equipartition of the energy} holds for our problem, i.e.,
  the contributions of the two terms in
 $E_\varepsilon(u_\varepsilon)$ are  essentially equal. It is
  well-known that this principle holds for {\em scalar} problems, like
  $\Gamma$-convergence of the
  Modica-Mortola functional, see
  \cite{modica,modica-mortola,sternberg} or its vector--valued
  analogues, see e.g.~\cite[Section 2]{sternberg} and
  \cite{fonseca-tartar}. In these works the equipartition is
associated with phase-separation and, more specifically, with the profile of a minimizer being asymptotically one-dimensional. The equipartition of energy in
our problem is of a quite different nature. Roughly speaking, it results from the approximate pointwise
equality $$\varepsilon|\nabla(\ln\rho_\varepsilon)|\sim
|\nabla\varphi_\varepsilon|$$ holding for a minimizer which can be
written locally as
$u_\varepsilon=\rho_\varepsilon e^{i\varphi_\varepsilon}$.  Thus, in our case, equipartition reflects a strong coupling
between the phase and the modulus of a minimizer.
\end{remark}
      \subsection{When the boundary condition is a Blaschke
        product}
      \label{subsec:blaschke}
In this subsection we will show that the case considered above
in \rcor{cor:rad}, where we were able to give a simple explicit formula
for the minimizers for each fixed $\varepsilon$, is a special case of
a more general family of boundary data. In fact, let $\Omega=B_1$ and
$g=F|_{\partial B_1}$ where $F\in C(\overline B_1)$ is analytic
function on $B_1$ that sends $\partial B_1$ to itself. It is well-known that such $F$ must be a finite Blaschke product, i.e., of the form
\begin{equation}\label{eq:B}
F(z)=e^{i\alpha}\Prod_{j=1}^D \left(\frac{z-a_j}{1-\bar a_jz}\right),
\end{equation}
for some $\alpha\in\R$ and $D$ points $a_1,a_2,\ldots,a_D$  (not necessarily distinct) in
$B_1$. Note that the choice $a_j=0$, $\forall j$ (and $\alpha=0$)
corresponds to the $D$-symmetric boundary data considered above. 
\begin{proposition}
	\label{prop:B}
	When $\Omega=B_1$ and $g=F|_{\partial B_1}$ with  $F$ as in
        \eqref{eq:B} we have for each $\varepsilon$: the map
        $U(z):=U_\varepsilon(z)=|F(z)|^{\varepsilon}\left(\frac{F(z)}{|F(z)|}\right)$
        is the unique minimizer of $E_\varepsilon$ over $H^1_g(B_1)$.
\end{proposition}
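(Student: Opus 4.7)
My plan is to verify that $U=U_\varepsilon$ saturates the lower bound $2\pi D/\varepsilon$ of \rprop{prop:lb} (so is \emph{a} minimizer), and then to extract pointwise equality conditions from that lower bound to obtain uniqueness. Writing $U=\rho e^{i\varphi}$ with $\rho=|F|^\varepsilon$ and $\varphi=\arg F$, the local holomorphicity of $\log F$ gives $|\nabla\ln|F||=|\nabla\arg F|=|F'/F|$ by the Cauchy--Riemann equations. Plugging into the form \eqref{eq:333} of the energy I obtain
\begin{equation*}
E_\varepsilon(U)=\int_{B_1}\bigl(|F|^{2\varepsilon}|\nabla\arg F|^2+\tfrac{1}{\varepsilon^2}|\nabla|F|^\varepsilon|^2\bigr)=2\int_{B_1}|F|^{2\varepsilon-2}|F'|^2.
\end{equation*}
Since a Blaschke product of degree $D$ is a $D$--sheeted branched covering $F:B_1\to B_1$, the change of variables $w=F(z)$ yields $\int_{B_1}|F'|^2 h(F)\,dx=D\int_{B_1}h(w)\,dA(w)$ for any measurable $h\ge 0$, whence $E_\varepsilon(U)=2D\int_{B_1}|w|^{2\varepsilon-2}\,dA=2\pi D/\varepsilon$. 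Together with \rprop{prop:lb}, this shows $U$ is a minimizer.

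For uniqueness I would reprove the lower bound \emph{via the Jacobian} so that the equality cases become transparent:
\begin{equation*}
E_\varepsilon(u)\;\ge\;\tfrac{2}{\varepsilon}\!\int_{B_1}\!\rho|\nabla\rho||\nabla\varphi|\;\ge\;\tfrac{2}{\varepsilon}\!\int_{B_1}\!|\det\nabla u|\;\ge\;\tfrac{2}{\varepsilon}\Bigl|\!\int_{B_1}\!\det\nabla u\Bigr|=\tfrac{2\pi D}{\varepsilon},
\end{equation*}
combining Young's inequality, the pointwise bound $|\det\nabla u|=\rho|\nabla\rho\times\nabla\varphi|\le\rho|\nabla\rho||\nabla\varphi|$, and $\int_{B_1}\det\nabla u=\pi D$ (from Stokes' theorem and $\deg g=D$). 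For a minimizer each step is an equality, forcing, a.e.\ on $\{u\ne 0\}$,
\begin{equation*}
\rho|\nabla\varphi|=\tfrac{1}{\varepsilon}|\nabla\rho|,\qquad \nabla\rho\perp\nabla\varphi,\qquad \det\nabla u\ge 0.
\end{equation*}
Setting $\psi:=\varepsilon^{-1}\ln\rho$, so that $\nabla\rho=\varepsilon\rho\nabla\psi$, the first two conditions read $|\nabla\psi|=|\nabla\varphi|$ and $\nabla\psi\perp\nabla\varphi$, while the sign condition $\det\nabla u=\varepsilon\rho^2(\psi_x\varphi_y-\psi_y\varphi_x)\ge 0$ selects the correct orientation, yielding precisely the Cauchy--Riemann equations $\psi_x=\varphi_y$, $\psi_y=-\varphi_x$. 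Hence $H:=e^{\psi+i\varphi}=\rho^{1/\varepsilon}e^{i\varphi}$ is holomorphic on $\{u\ne 0\}$ and bounded by $1$; by Riemann's removable singularity theorem applied across the (discrete) zero set of $u$, $H$ extends to a bounded holomorphic function on $B_1$. Since its boundary trace is $\rho^{1/\varepsilon}e^{i\varphi}\big|_{\partial B_1}=g=F\big|_{\partial B_1}$, uniqueness of the holomorphic extension of a boundary datum forces $H\equiv F$ in $B_1$; therefore $\rho=|F|^\varepsilon$ and $e^{i\varphi}=F/|F|$, so $u=U$.

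The crux is the extraction of the Cauchy--Riemann equations \emph{with the correct orientation} from the three pointwise equality conditions---the non-negative Jacobian condition is what distinguishes CR from anti-CR. Once this is in hand, the remainder is standard complex analysis. The only additional delicate point is regularity near the zero set of $u$ needed to legitimately apply removable singularity, but this is routine since minimizers of $E_\varepsilon$ are smooth away from their zeros and the argument will in fact show that the zero set of any minimizer coincides with that of $F$.
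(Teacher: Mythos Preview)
Your proof is correct and follows a genuinely different route from the paper's.

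For part (i), the paper verifies $E_\varepsilon(U)=2\pi D/\varepsilon$ by showing that the Cauchy--Schwarz step in \eqref{eq:CS} is an equality for $U$ and then applying the coarea formula on the level sets of $|U|$. Your computation via the change of variables $w=F(z)$, exploiting that a Blaschke product is a $D$--sheeted branched covering $B_1\to B_1$, is shorter and more transparent.

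For uniqueness, the difference is more substantial. The paper works \emph{locally near the boundary}: using Hopf's lemma and the structure of level curves in an annulus $\{r_0<|z|<1\}$, it extracts the Cauchy--Riemann relations for $(\ln\rho^{1/\varepsilon},\varphi)$ there, matches $u$ with $U$ on that annulus via the boundary data, and then propagates the equality $u=U$ to all of $B_1$ by real analyticity of minimizers away from their zeros. Your argument is \emph{global}: recasting the lower bound through the Jacobian,
\[
E_\varepsilon(u)\ge\tfrac{2}{\varepsilon}\!\int_{B_1}\!|\det\nabla u|\ge\tfrac{2}{\varepsilon}\Bigl|\!\int_{B_1}\!\det\nabla u\Bigr|=\tfrac{2\pi D}{\varepsilon},
\]
makes the equality conditions pointwise and orientation-sensitive. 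The three conditions $\rho|\nabla\varphi|=\varepsilon^{-1}|\nabla\rho|$, $\nabla\rho\perp\nabla\varphi$, and $\det\nabla u\ge 0$ combine to give the Cauchy--Riemann equations for $\psi+i\varphi$ (with the correct sign) a.e.\ on $\{u\ne 0\}$, and the known smoothness of minimizers there upgrades this to classical CR. Then $H=\rho^{1/\varepsilon}e^{i\varphi}=|u|^{1/\varepsilon-1}u$ is holomorphic on $B_1\setminus\mathcal{S}$, bounded by $1$, and---since $\mathcal{S}$ is finite by the Lin/Hardt--Lin regularity the paper also invokes---extends holomorphically to $B_1$, continuous on $\overline{B_1}$, with boundary trace $g=F$. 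The identification $H\equiv F$ is then immediate.

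Both proofs ultimately rely on the same external regularity input (finite zero set, smoothness off it, H\"older continuity up to $\partial B_1$). What your Jacobian approach buys is a direct global derivation of holomorphicity, avoiding the level-set analysis and the analytic continuation step; what the paper's approach buys is a somewhat more self-contained argument that makes explicit contact with the coarea structure already used for the lower bound. Your last sentence (``the argument will in fact show that the zero set of any minimizer coincides with that of $F$'') is only a posteriori and should not be read as replacing the a priori finiteness of $\mathcal S$ needed for the removable-singularity step.
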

\begin{proof}
{\rm (i)} We first prove that $U$ is a minimizer.	Let us denote $\widetilde{\rho}=|F|$ in $B_1$ and $h=\ln|\widetilde\rho|$  in $E:=B_1\setminus\{a_1,\ldots,a_D\}$. 
	Locally in $E$ we may write
        $F=\widetilde{\rho}e^{i\varphi}=e^{h+i\varphi}$. The function
        $\varphi$ is then a harmonic conjugate of the harmonic
        function $h$, locally in $E$.
         Note that although $\varphi$ is defined only locally in $E$,
         its gradient is globally defined there since
         $\nabla\varphi=(F/|F|)\wedge\nabla(F/|F|)$.
      In particular, the equality
	\begin{equation}
	\label{eq:nab}
	\left|\nabla\left(\frac{F(z)}{|F(z)|}\right)\right|=|\nabla h|\,,
      \end{equation}
      holds globally in $E$.
	Consider $U$, defined in $B_1$ as in the statement of the proposition, i.e., $U=\rho\frac{F(z)}{|F(z)|}$ with $\rho=(\widetilde{\rho})^\varepsilon$, so locally in $E$ we have $U=\rho e^{i\varphi}$.  
	\par Next we notice that for $u=U$, the Cauchy-Schwarz
        inequality used in \eqref{eq:CS} reduces to an
        equality. Indeed, we need the pointwise equality
        $|\nabla\rho|/\varepsilon=\rho|\nabla\varphi|$, which is
        equivalent to
        \begin{equation}
	\label{eq:777} |\nabla(\ln\rho)|/\varepsilon=|\nabla\varphi|.
	\end{equation}
	Since $\ln\rho=\varepsilon\ln h$ we finally deduce \eqref{eq:777} from \eqref{eq:nab}. To sum-up, so far we proved that 
	\begin{equation}\label{eq:sofar}
	E_\varepsilon(U)=\frac{2}{\varepsilon}\int_E
        |\nabla\rho||\nabla(F/|F|)|=
        \frac{2}{\varepsilon} \int_E |\nabla\rho||\nabla(U/|U|)|.
	\end{equation}
	\par Next we continue to follow the proof of \rprop{prop:lb} for the case $u=U$. We denote
	\begin{equation}\label{eq:Gamma}
	\Gamma=\{t\in(0,1)\,:\,t\text{ is a regular value of }\rho\}=\{t\in(0,1)\,:\,t\text{ is a regular value of }\widetilde\rho\}.
	\end{equation}
	Clearly $\Gamma$ has full measure in $(0,1)$. For each
        $t\in\Gamma$ the set $\gamma_t:=\{\rho^{-1}(t)\}$ consists of
        a finite union of smooth closed curves, each encircles some of
        the points $\{a_1,\ldots,a_D\}$ (and the union of them
        encircle all the points). At each point of $\gamma_t$, with
        $t\in\Gamma$, we have
        $\frac{\partial\varphi}{\partial\tau}=\frac{\partial(\ln\widetilde{\rho})}{\partial\nu}>0$,
        since $|\nabla\widetilde{\rho}|>0$ on $\gamma_t$. Moreover,
        $\frac{\partial\varphi}{\partial\nu}=-\frac{\partial(\ln\widetilde{\rho})}{\partial\tau}=0$. Whence, for each $t\in\Gamma$ there holds 
	\begin{equation}\label{eq:ong}
	\int_{\gamma_t} |\nabla (U/|U|)|\,d\tau=\int_{\gamma_t} |\partial_\tau\varphi|\,d\tau=\int_{\gamma_t}\partial_\tau\varphi\,d\tau=\int_{\gamma_t} (U/|U|)\wedge (U/|U|)_\tau= =\int_{\partial\Omega} (U/|U|)\wedge (U/|U|)_\tau=2\pi D.
      \end{equation}
	Using \eqref{eq:ong} in conjunction with the coarea formula as in \eqref{eq:111} gives
	\begin{equation}\label{eq:3333}
	\int_E |\nabla\rho||\nabla(U/|U|)|=\int_\Gamma \int_{\gamma_t} t|\nabla (U/|U|)|\,d\tau\,dt= (2\pi D)\int_0^1 t\,dt=\pi D.
	\end{equation}
	Combining \eqref{eq:3333} with \eqref{eq:sofar} yields
        $E_\varepsilon(U)=2\pi D/\varepsilon$. Applying
        \rprop{prop:lb} we finally conclude that indeed $U$ is a
        minimizer. \\[2mm]
    (ii) Next we will prove the uniqueness assertion
    -- 
    the main idea is to analyse the equality cases in the inequalities
    we used in part (i). We shall need a result
  of F.\,H.\,Lin~\cite{lin89} who derived (in a more general setting) 
 the Euler-Lagrange equation satisfied by
 $\rho_\varepsilon^2=|u_\varepsilon|^2$:
 \begin{equation}
    \label{eq:41}
    \Delta(\rho_\varepsilon^2)=2\varepsilon^2\left(|\nabla u_\varepsilon|^2+\left(\frac{1}{\varepsilon^2}-1\right)|\nabla\rho_\varepsilon|^2\right)\,,
  \end{equation}
  by using variations of the form
$u^{(t)}=(1+t\phi(x))u_\varepsilon$ (the same equation can
be deduced from the second equation in \eqref{eq:EL} on the set $\{\rho_\varepsilon>0\}$).  
In particular, the function $\rho^2_{\varepsilon}$ is subharmonic in $\Omega$.
 We next recall
    known regularity properties of any minimizer $u_\varepsilon$. By
    \cite{hl93,lin91}, the function $u_\varepsilon$ is H\"older continuous in ${\bar
      B}_1$ and is real analytic in $B_1\setminus {\mathcal S}$, where
    ${\mathcal S}$ is a
    finite set consisting of the zeros of $u_\varepsilon$. By
    \eqref{eq:41} and the strong maximum principle
    $\rho_\varepsilon=|u_\varepsilon|$ satisfies
    $0<\rho_\varepsilon<1$ in $B_1$. Moreover, by Hopf lemma,
    $\frac{\partial(\rho^2_\varepsilon)}{\partial n}>0$ on $\partial
    B_1$. Next we fix $r_0$ satisfying  $\displaystyle\max_{1\le j\le D}|a_j|<r_0<1$ and
    \begin{equation}
      \label{eq:209}
      \frac{\partial\rho_\varepsilon}{\partial r}>0 \text{ in }
    \{r_0\le|x|\le1\}\,,
    \end{equation}
    and let
    \begin{equation*}
      T:=\max_{\{|x|=r_0\}}\rho_\varepsilon(x)\,.
    \end{equation*}
Since by assumption
    $E_\varepsilon(u_\varepsilon)=E_\varepsilon(U)=\frac{2\pi
      D}{\varepsilon}$, equality must hold when we plug
    $u_\varepsilon$ for $u$ in all the inequalities in
    \eqref{eq:CS},\eqref{eq:44},\eqref{eq:111}. Since by the maximum
    principle $\rho_\varepsilon<T$ in $B_{r_0}$, for each $t\in(T,1)$
    we have $\bar\gamma_t:=\{\rho_\varepsilon^{-1}(t)\}\subset
    \{r_0<|x|<1\}$. Moreover, thanks to \eqref{eq:209} every
    $t\in(T,1)$ is a regular value of $\rho_\varepsilon$, whence 
    the set $\bar\gamma_t$ consists of a single closed smooth curve (since the
    topology of $\bar\gamma_t$ should be the same as that of $\bar\gamma_1=S^1$ in the
    absence of a critical point).
    For each such $t$  we write locally
    $u_\varepsilon=\rho_\varepsilon e^{i\varphi_\varepsilon}$ on the
    curve $\bar\gamma_t$. 
    From the pointwise equality,
    $\nabla(u_\varepsilon/|u_\varepsilon|)=(u_\varepsilon/|u_\varepsilon|)\wedge(u_\varepsilon/|u_\varepsilon|)_\tau$
    that  holds in \eqref{eq:44}, we obtain that
    $\nabla\varphi_\varepsilon=\vec\tau\perp\nabla\rho_\varepsilon$,
  i.e., taking into account orientation, 
    \begin{equation}
      \label{eq:211}
     \big( (\varphi_\varepsilon)_x,
     (\varphi_\varepsilon)_y\big)=\lambda(x,y)\big(-(\rho_\varepsilon)_y,
     (\rho_\varepsilon)_x\big)~\text{ locally on }\bar\gamma_t\,,
   \end{equation}
    for some $\lambda(x,y)>0$.
    On the other hand,  the pointwise equality
    $$\frac{|\nabla\rho_\varepsilon|}{\varepsilon\rho_\varepsilon}=\left|\nabla\Big(\frac{u_\varepsilon}{|u_\varepsilon|}\Big)\right|\,,$$
    that must hold for $u=u_\varepsilon$ in
   \eqref{eq:CS} on each $\bar\gamma_t$ for
   $t\in(T,1)$, can be rewritten as
   \begin{equation}
     \label{eq:212}
     |\nabla(\ln\rho_\varepsilon^{1/\varepsilon})|=|\nabla\varphi_\varepsilon|~\text{
       locally on }\bar\gamma_t,\,t\in(T,1).
   \end{equation}
    Combining \eqref{eq:211} with \eqref{eq:212} we deduce that
    locally on the set $\bigcup_{T<t<1} \bar\gamma_t$, the pair of
    functions $\ln\rho_\varepsilon^{1/\varepsilon}$ and 
    $\varphi_\varepsilon$ satisfy the Cauchy-Riemann equations
    ($\varphi_\varepsilon$ being a complex conjugate of $\ln\rho_\varepsilon^{1/\varepsilon}$).
    In particular, this holds locally  on some annulus
    $A_{r_1}=\{r_1<|x|<1\}\subset\bigcup_{T<t<1} \bar\gamma_t$ for
    some $r_1\in(r_0,1)$ and {\em globally} on   $A_{r_1}\setminus{\mathcal
      L}$ where ${\mathcal L}=\{(s,0)\,:\,r_0<s<1\}$. Since $U=u_\varepsilon$ on $\partial B_1$
    we have $\varphi=\varphi_\varepsilon+2\pi J$ on $\partial B_1\setminus{\mathcal
      L}$ for some $J\in\Z$, and we may assume w.l.o.g. that
    $J=0$. Since we also have
    $\frac{\partial\varphi}{\partial\nu}=\frac{\partial\varphi_\varepsilon}{\partial\nu}=0$
    on $\partial B_1\setminus{\mathcal L}$ (using
    $\rho=\rho_\varepsilon=1$ on $\partial B_1$ and the Euler-Lagrange
    equations) and since a harmonic function is uniquely determined by
    its values and the values of its normal derivative on the
    boundary, we deduce that $\varphi=\varphi_\varepsilon$ in $A_{r_1}\setminus{\mathcal
      L}$. By the Euler-Lagrange equations we obtain that also
    $\rho=\rho_\varepsilon$ in $A_{r_1}\setminus{\mathcal
      L}$, whence $u_\varepsilon=U$ in $A_{r_1}$. Finally, as both
    $u_\varepsilon$ and $U$ are real analytic in
    $B_1\setminus({\mathcal S}\cup\{a_j\}_{j=1}^D)$ that coincide on
    the open subset $A_{r_1}$, they must coincide also on
    $B_1\setminus({\mathcal S}\cup\{a_j\}_{j=1}^D)$, and then also on $B_1$.
    \qed
      \end{proof}
      \section{Proof of \rth{th:main1}}
      \label{sec:main1}
 This section is devoted to the proof of \rth{th:main1}. Note that
 estimate \eqref{eq:42} was already established in \rcor{cor:energy}.
 In most of this section we assume, as we may w.l.o.g., that
 $\Omega=B_1$. 
 \subsection{Construction of the bad discs}
 \label{subsec:bd}
   The objective of this subsection is to show that the set where
 $|u_\varepsilon|$ is close to zero is \enquote{small}. This is
 established in \rprop{prop:bad-disc} below, where we show that for some
 $\beta<1$ the set $\{|u_\varepsilon|<\beta\}$ can be covered
 by a finite collection of discs of small radii whose number is
 bounded uniformly in $\varepsilon$. This is the same approach
 as that used in \cite{BBH} for studying minimizers of the Ginzburg-Landau
 energy, but the technique we use here is different. 

Recall that by  \rcor{cor:energy} we have for some $c_1>0$,
\begin{equation}\label{eq:upper}
E_\varepsilon(u_\varepsilon)\le \frac{c_1}{\varepsilon}\,,~\forall
\varepsilon\in(0,1).
\end{equation}
In the sequel we fix a $\beta\in(1/\sqrt{2},1)$ that for reasons to become
clear later we assume to satisfy
\begin{equation}
  \label{eq:16}
  \beta^2>\frac{D}{D+1}\,.
\end{equation}

\smallskip
\begin{lemma}
\label{lem:1/2}	
	Let $u_\varepsilon$ be a minimizer satisfying
	\begin{equation}\label{eq:cond}
	r_0\int_{\partial B_{r_0}}|\nabla u_\varepsilon|^2+\Big(\frac{1}{\varepsilon^2}-1\Big)|\nabla\rho_\varepsilon|^2\le\delta_0\,,
	\end{equation}
	 with $\delta_0$ as in \eqref{eq:delta0}  and $r_0>0$ satisfying 
	 \begin{equation}\label{eq:73}
	 \frac{c_1\varepsilon}{2\pi}\ln(1/r_0)<\frac{(1-\beta)^2}{4}.
	 \end{equation}
Then,  for $\varepsilon<\varepsilon_0$ we have
\begin{equation}\label{eq:74}
  |u_\varepsilon(0)|\ge \beta\,.
\end{equation}	 
\end{lemma}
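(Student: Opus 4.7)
The plan is to reduce the problem to a degree-zero local setting and then invoke (a local version of) the iterative argument from the proof of \rprop{prop:uniform}. Applying Cauchy--Schwarz on $\partial B_{r_0}$ to the hypothesis \eqref{eq:cond} yields the oscillation bounds
\begin{equation*}
|u_\varepsilon(x_1)-u_\varepsilon(x_2)| \leq \sqrt{2\pi\delta_0} < \tfrac{1}{\sqrt{2}}, \qquad |\rho_\varepsilon(x_1)-\rho_\varepsilon(x_2)| \leq \sqrt{4\pi\delta_0}\,\varepsilon,
\end{equation*}
for all $x_1,x_2\in\partial B_{r_0}$, where the first inequality uses \eqref{eq:delta0}. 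Next, using the global upper bound $E_\varepsilon(u_\varepsilon)\leq c_1/\varepsilon$ from \eqref{eq:upper} together with a logarithmic annular estimate applied to the radial average $\bar\rho(r):=\frac{1}{2\pi r}\int_{\partial B_r}\rho_\varepsilon$ on $B_1\setminus B_{r_0}$ (in the spirit of \eqref{eq:721}), I obtain
\begin{equation*}
(1-\bar\rho(r_0))^2 \leq \frac{c_1\,\varepsilon\,\ln(1/r_0)}{2\pi} < \frac{(1-\beta)^2}{4},
\end{equation*}
the last inequality being precisely the hypothesis \eqref{eq:73}. Hence $\bar\rho(r_0) > (1+\beta)/2$, and combined with the oscillation bound this gives $\rho_\varepsilon \geq \beta$ pointwise on $\partial B_{r_0}$ for $\varepsilon$ sufficiently small.

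Since $\rho_\varepsilon \geq \beta > 1/\sqrt{2}$ on $\partial B_{r_0}$ and the oscillation of $u_\varepsilon$ there is $<1/\sqrt{2}$, the image $u_\varepsilon(\partial B_{r_0})\subset\R^2$ lies in a Euclidean disc that avoids the origin. Therefore $\deg(u_\varepsilon,\partial B_{r_0})=0$ and we may write $u_\varepsilon=\rho_\varepsilon e^{i\varphi_\varepsilon}$ on $\partial B_{r_0}$. Using the minimality of $u_\varepsilon|_{B_{r_0}}$ relative to its own boundary data, comparing it with the harmonic-extension competitor $\widetilde\rho\,e^{i\widetilde\varphi}$ (whose modulus $\widetilde\rho \geq \beta$ by the maximum principle, so it is well-defined and nonvanishing), and invoking \eqref{eq:H2} together with the pointwise inequality $|\partial_\tau\varphi_\varepsilon|^2 \leq |\nabla u_\varepsilon|^2/\rho_\varepsilon^2 \leq |\nabla u_\varepsilon|^2/\beta^2$ on $\partial B_{r_0}$, yields the volume estimate
\begin{equation*}
\int_{B_{r_0}}|\nabla u_\varepsilon|^2+(\varepsilon^{-2}-1)|\nabla\rho_\varepsilon|^2 \leq r_0\int_{\partial B_{r_0}}\tfrac{1}{\beta^2}|\nabla u_\varepsilon|^2+\varepsilon^{-2}|\nabla\rho_\varepsilon|^2 \leq C\delta_0,
\end{equation*}
with $C$ depending only on $\beta$.

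This bound places us in the setting of \rprop{prop:uniform}: a minimizer with bounded volume energy on $B_{r_0}$ and degree-zero boundary data on $\partial B_{r_0}$. Running the dyadic iteration from that proof on the nested balls $B_{r_0/2^j}$ (choosing $r'_j\in(r_0/2^{j+1},r_0/2^j)$ by Fubini so that the boundary energy on $\partial B_{r'_j}$ decays geometrically as in \eqref{eq:fubini-j}), I obtain at each step a bound $|\bar\rho(r'_{j-1})-\bar\rho(r'_j)| \leq C\left(\tfrac{4}{5}\right)^{(j-1)/2}\varepsilon$ from the annular log-estimate, and the series telescopes to $|\bar\rho(r_0)-\rho_\varepsilon(0)| \leq C'\varepsilon$ in the limit $j\to\infty$ (using continuity of $\rho_\varepsilon$ at the origin). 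Combined with $\bar\rho(r_0)>(1+\beta)/2$ this yields $\rho_\varepsilon(0) > (1+\beta)/2 - C'\varepsilon \geq \beta$ provided $\varepsilon \leq \varepsilon_0$ for a suitable $\varepsilon_0>0$. The main technical obstacle is verifying that the dyadic iteration of \rprop{prop:uniform} really carries over to the present local setting, where the boundary data on $\partial B_{r_0}$ does not satisfy $\rho_\varepsilon\equiv 1$ (as the global boundary data does there). The key point that makes this work is that the oscillation of $\rho_\varepsilon$ on $\partial B_{r_0}$ is already $O(\varepsilon)$, so each step of the iteration picks up only an $O(\varepsilon)$ correction that is absorbed into the same geometric series; one may need to choose $\delta_0$ somewhat smaller than $1/(4\pi)$ to ensure that the degree-zero conclusion persists at every dyadic scale.
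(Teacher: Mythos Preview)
Your proof is correct and follows essentially the same route as the paper's: Cauchy--Schwarz oscillation bounds on $\partial B_{r_0}$, the radial-average log-estimate via \eqref{eq:upper} and \eqref{eq:73} to force $\bar\rho(r_0)>(1+\beta)/2$, the degree-zero conclusion, the harmonic-extension comparison to get a volume energy bound on $B_{r_0}$, and then the dyadic iteration of \rprop{prop:uniform}. Your closing worry about needing to shrink $\delta_0$ is unnecessary: the paper runs the iteration with the same $\delta_0<1/(4\pi)$, since the geometric decay \eqref{eq:175} keeps both the oscillation of $u$ and the deviation of $\rho$ from $1$ small enough at every scale to maintain the degree-zero condition.
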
	
\begin{proof}
  For simplicity we shall drop the subscript $\varepsilon$.
Analogously to \eqref{eq:close} and \eqref{eq:mod-close} we have
\begin{align}\label{eq:close-r0}
|u(x_1)-u(x_2)|&\leq \int_{\partial B_{r_0}}\!\!|\nabla u|\leq (2\pi r_0)^{1/2}\Big(\int_{\partial B_{r_0}}\!\!|\nabla u|^2\Big)^{1/2}\leq \sqrt{2\pi\delta_0},~\forall x_1,x_2\in\partial B_{r_0},\\
\label{eq:mod-close-r0}
|\rho(x_1)-\rho(x_2)|&\leq \sqrt{2\pi\delta_0}\,\varepsilon,~\forall x_1,x_2\in\partial B_{r_0}.
\end{align}
Defining $\bar\rho$ as in \eqref{eq:bar-rho}, we find by \eqref{eq:upper}, analogously to \eqref{eq:721}: 
\begin{equation}
\label{eq:1721}
c_1\varepsilon\ge \int_{B_1\setminus B_{r_0}} |\nabla\rho|^2\ge\int_{B_1\setminus B_{r_0}} |\nabla\bar\rho|^2=
2\pi \int_{r_0}^1 \left|\frac{d\bar\rho}{dr}\right|^2r\,dr\ge \frac{2\pi(1-\bar\rho(r_0))^2}{\ln(1/r_0)}\,,
\end{equation}
whence, by \eqref{eq:73}
\begin{equation}\label{eq:1722}
1-\bar\rho(r_0)\leq \left\{ \frac{c_1\varepsilon}{2\pi}\ln(1/r_0)\right\}^{1/2}<\frac{1-\beta}{2}\,.
\end{equation}
By \eqref{eq:1722} and \eqref{eq:mod-close-r0} we get that 
\begin{equation*}
1-\rho(x)\leq \frac{1-\beta}{2}+O(\varepsilon)\text{  on }\partial B_{r_0},
\end{equation*}
so in particular,
\begin{equation}
  \label{eq:12}
  \rho^2\ge(3/4)^2>1/2~\text{ on }\partial B_{r_0}\,.
\end{equation}
From \eqref{eq:12} and  \eqref{eq:close-r0} we conclude that
$\deg(u/|u|,\partial B_{r_0})=0$. Therefore we may write on $\partial
B_{r_0}$, $u=\rho e^{i\varphi}$. Using the harmonic extensions of
$\rho$ and $\varphi$, as in the proof of \rprop{prop:uniform}, we
obtain (using \eqref{eq:H2} and \eqref{eq:cond}) that
 \begin{multline}
	   \label{eq:3122}
	  \int_{B_{r_0}} |\nabla u|^2+\left(\frac{1}{\varepsilon^2}-1\right)|\nabla\rho|^2
	   \le \int_{B_{r_0}} |\nabla\widetilde\varphi|^2+\frac{1}{\varepsilon^2}|\nabla\widetilde\rho|^2\le 
	   r_0\int_{\partial B_{r_0}}\left|\frac{\partial\varphi}{\partial\tau}\right|^2+\frac{1}{\varepsilon^2}\left|\frac{\partial\rho}{\partial\tau}\right|^2\\
	   \le 2r_0\int_{\partial B_{r_0}}\rho^2\left|\frac{\partial\varphi}{\partial\tau}\right|^2+\frac{1}{\varepsilon^2}\left|\frac{\partial\rho}{\partial\tau}\right|^2
	   \le 2r_0\int_{\partial B_{r_0}}|\nabla u|^2+\left(\frac{1}{\varepsilon^2}-1\right)|\nabla\rho|^2\le 2\delta_0.
	  \end{multline}

Next we continue as in the proof of \rprop{prop:uniform}, defining $r_j=r_0/2^j$ for $j\ge1$ and choosing successively, for $j\ge0$, 
 $r'_j\in(r_{j+1},r_j)$ satisfying 
 \begin{equation}
 \label{eq:fubini-jj}
 \int_{\partial B_{r_j^\prime}}  |\nabla u|^2+(1/\varepsilon^2-1)|\nabla\rho|^2\leq \frac{2}{{r_j^\prime}}\int_{B_{r_j}\setminus B_{r_j/2}} |\nabla u|^2+(1/\varepsilon^2-1)|\nabla\rho|^2\,. 
\end{equation}
This allows us to conclude, arguing as in \eqref{eq:8/9-1} and \eqref{eq:fubini-j}, that
\begin{equation}
  \label{eq:175}
\int_{B_{r_{j+1}}} |\nabla u|^2+(1/\varepsilon^2-1)|\nabla\rho|^2\le
\frac{4}{5}\int_{B_{r_{j}}} |\nabla
u|^2+(1/\varepsilon^2-1)|\nabla\rho|^2\le 2\delta_0\left(\frac{4}{5}\right)^{j+1}\,.
\end{equation}
Combining \eqref{eq:175} with \eqref{eq:fubini-jj} yields
\begin{equation*}
   \int_{\partial B_{r_j^\prime}}  |\nabla u|^2+(1/\varepsilon^2-1)|\nabla\rho|^2\leq \frac{4\delta_0}{r_j^\prime}\left(\frac{4}{5}\right)^j\,,
\end{equation*}
implying, in particular, that
\begin{equation}
  \label{eq:176}
  |\rho(x)-\rho(y)|\le
  (8\pi\delta_0)^{1/2}\varepsilon\left(\frac{4}{5}\right)^{j/2},~\forall x,y\in \partial B_{r_j^\prime}.
\end{equation}
As in \eqref{eq:723} and \eqref{eq:724} we get that
\begin{equation}
  \label{eq:13}
  |\bar\rho(r'_{j-1})-\bar\rho(r'_j)|\le \left(\frac{\delta_0\ln 4}{\pi}\right)^{1/2}\cdot\Big(\frac{4}{5}\Big)^{(j-1)/2}\varepsilon\,.
\end{equation}
Therefore, analogously  to \eqref{eq:bound} we obtain 
 \begin{equation}\label{eq:bound-9}
 1-\bar\rho(r'_j)\leq 1-\bar\rho(r'_0)+\sum_{i=1}^j|\bar\rho(r'_{i-1})-\bar\rho(r'_{i})|
 \leq  \frac{1-\beta}{2}+\left\{\sum_{i=1}^{j}\left(\frac{4}{5}\right)^{(i-1)/2}\right\}\left(\frac{\ln 4}{\pi}\delta_0\right)^{1/2}\varepsilon\,.
 \end{equation}
 Thanks to \eqref{eq:176}--\eqref{eq:bound-9}, we have for each
 $j$ that $1-\rho(x)\le \frac{1-\beta}{2}+O(\varepsilon)$ on $\partial
 B_{r'_j}$, which allows us to continue with the
 construction. Finally, letting $j$ go to $\infty$ in
 \eqref{eq:bound-9} we get that $1-\rho(0)\le
 (1-\beta)/2+O(\varepsilon)$ so, in particular, \eqref{eq:74} holds
 for $\varepsilon<\varepsilon_0$. \qed
\end{proof}
\begin{definition} $~$\\
	(i) We shall say that $0$ is a {\em good point} for $u_\varepsilon$ if there exists $r_0$ satisfying \eqref{eq:73} and \eqref{eq:cond}. \\
	(ii) We shall say that $a\in B_1$ is a {\em good point} of $u_\varepsilon$ if $0$ is a {good point} for $v_\varepsilon=u_\varepsilon\circ M_{a}$.
 \end{definition}
 \begin{corollary}$~$\\
 \label{cor:good}
\noindent (i) If $a$ is a good point for $u_\varepsilon$ then $|u_\varepsilon(a)|\ge\beta$. \\
\noindent (ii) If $|u_\varepsilon(0)|<\beta$ then 
\begin{align}\label{eq:bad}
r\int_{\partial B_r} |\nabla
  u_\varepsilon|^2+\big((1/\varepsilon)^2-1\big)|\nabla\rho_\varepsilon|^2&>\delta_0\quad\mathrm{when}\quad r>\rho_0(\varepsilon), \\
 \label{eq:Ebad}
\int_{B_{r_2}\setminus B_{r_1}} |\nabla u_\varepsilon|^2+\big((1/\varepsilon)^2-1\big)|\nabla\rho_\varepsilon|^2&>\delta_0 \ln(r_2/r_1)\ \mathrm{when}\   1>r_2>r_1\ge\rho_0(\varepsilon).
\end{align}
Here
$\rho_0(\varepsilon):=\exp(-\frac{\pi(1-\beta)^2}{2c_1\varepsilon})=\exp(-\frac{c_2}{\varepsilon})$,
with $c_2=\frac{\pi(1-\beta)^2}{2c_1}$.

\noindent (iii) There exists a constant $c_3>0$ such that, if 
$|u_\varepsilon(a)|<\beta$, then for
  $\rho_1=\rho_1(\varepsilon):=\rho_0(\varepsilon)^{1/2}$
we have
\begin{equation}
\label{eq:Ebad-eps}
\int_{D_{\tanh^{-1}\rho_1}(a)} |\nabla u_\varepsilon|^2+\big((1/\varepsilon)^2-1\big)|\nabla\rho_\varepsilon|^2\ge c_3/\varepsilon\,.
\end{equation}
 \end{corollary}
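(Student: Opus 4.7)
The three parts are essentially direct consequences of \rlemma{lem:1/2} together with conformal invariance of $E_\varepsilon$, and of the contrapositive applied to the definition of a good point. I would organize the argument as follows.

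For part (i), if $a$ is a good point for $u_\varepsilon$ then by definition $0$ is a good point for $v_\varepsilon=u_\varepsilon\circ M_a$, i.e., there exists $r_0$ satisfying \eqref{eq:73} and \eqref{eq:cond} with $v_\varepsilon$ in place of $u_\varepsilon$. By conformal invariance of $E_\varepsilon$, $v_\varepsilon$ is itself a minimizer (for the boundary data $u_\varepsilon\circ M_a\big|_{\partial B_1}$) so \rlemma{lem:1/2} applies and yields $|v_\varepsilon(0)|\ge\beta$, which is the claim.

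For part (ii), suppose $|u_\varepsilon(0)|<\beta$. By \rlemma{lem:1/2} (contrapositive) there is no $r_0$ satisfying both \eqref{eq:73} and \eqref{eq:cond}. The condition \eqref{eq:73} is equivalent to $r_0>\exp\big(-\tfrac{\pi(1-\beta)^2}{2c_1\varepsilon}\big)=\rho_0(\varepsilon)$, so for every $r>\rho_0(\varepsilon)$ the integral in \eqref{eq:cond} must exceed $\delta_0$, which is exactly \eqref{eq:bad}. Integrating \eqref{eq:bad} in the radial variable via Fubini,
\[
\int_{B_{r_2}\setminus B_{r_1}}|\nabla u_\varepsilon|^2+\Big(\tfrac{1}{\varepsilon^2}-1\Big)|\nabla\rho_\varepsilon|^2=\int_{r_1}^{r_2}\!\Big(\int_{\partial B_r}|\nabla u_\varepsilon|^2+\big(\tfrac{1}{\varepsilon^2}-1\big)|\nabla\rho_\varepsilon|^2\Big)dr\ge\int_{r_1}^{r_2}\frac{\delta_0}{r}\,dr,
\]
giving \eqref{eq:Ebad}.

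For part (iii), assume $|u_\varepsilon(a)|<\beta$ and set $v_\varepsilon=u_\varepsilon\circ M_a$, so $|v_\varepsilon(0)|<\beta$. Applying part (ii) to $v_\varepsilon$ on the annulus $B_{\rho_1}\setminus B_{\rho_0}$ with $\rho_1=\rho_0(\varepsilon)^{1/2}$ yields
\[
\int_{B_{\rho_1}\setminus B_{\rho_0}}|\nabla v_\varepsilon|^2+\Big(\tfrac{1}{\varepsilon^2}-1\Big)|\nabla|v_\varepsilon||^2\ge\delta_0\ln(\rho_1/\rho_0)=\frac{\delta_0}{2}\ln(1/\rho_0)=\frac{\delta_0 c_2}{2\varepsilon}.
\]
By conformal invariance of $E_\varepsilon$, and because $M_a(B_{\rho_1})=D_{\tanh^{-1}\rho_1}(a)\supset M_a(B_{\rho_1}\setminus B_{\rho_0})$, the left-hand side equals the integral over a subset of $D_{\tanh^{-1}\rho_1}(a)$, whence \eqref{eq:Ebad-eps} holds with $c_3:=\delta_0 c_2/2$.

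There is no real obstacle; the only mildly delicate point is the calibration of parameters in part (iii), where the choice $\rho_1=\sqrt{\rho_0(\varepsilon)}$ is made precisely so that $\ln(\rho_1/\rho_0)=c_2/(2\varepsilon)$, converting the logarithmic divergence supplied by \eqref{eq:Ebad} into the $1/\varepsilon$ lower bound required.
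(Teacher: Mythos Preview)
Your proof is correct and follows essentially the same route as the paper: part (i) and \eqref{eq:bad} are obtained directly from \rlemma{lem:1/2} (via its contrapositive for \eqref{eq:bad}), \eqref{eq:Ebad} by radial integration, and part (iii) by specializing \eqref{eq:Ebad} to $r_1=\rho_0$, $r_2=\rho_1$ and invoking conformal invariance for general $a$. Your write-up simply makes explicit the details the paper leaves implicit, including the identification of \eqref{eq:73} with $r_0>\rho_0(\varepsilon)$ and the value $c_3=\delta_0 c_2/2$.
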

\begin{proof}
Assertion (i) and \eqref{eq:bad} are  immediate consequences of
\rlemma{lem:1/2}. The inequality \eqref{eq:Ebad} follows by integration of
\eqref{eq:bad}. The case $a=0$ in (iii) follows from (ii) applied with
$r_1=\rho_0(\varepsilon)$ and $r_2=\rho_1(\varepsilon)$. For general $a$ we use conformal
invariance. \qed
\end{proof}
\begin{definition}
	We denote the set of {\em bad points} of $u_\varepsilon$ by 
	\begin{equation}\label{eq:S}
	S=S_\varepsilon=\{x\in B_1\,:\,|u_\varepsilon(x)|<\beta\}.
	\end{equation}
      \end{definition}
 
      \begin{proposition}  
	\label{prop:bad-disc}	
	For each $\varepsilon>0$, there is a set of $m=m_\varepsilon$
        points
        $$\{x_j\}_{j=1}^m=\{x_j^{(\varepsilon)}\}_{j=1}^m\subset S,$$
        and a number $R=R^{(\varepsilon)}$ satisfying
        \begin{equation}
          \label{eq:208}
          -\frac{A}{\varepsilon}<\ln R<-\frac{B}{\varepsilon},\ \mbox{where }A,B>0,
        \end{equation}
         such that the
        (hyperbolic) discs $\{D_{\tanh^{-1}R}(x_j)\}_{j=1}^m$ are
        mutually disjoint and the following properties  hold:
  \begin{align}
    \label{eq:cover}
    	& (a)\ S\subset\bigcup_{j=1}^m D_{\tanh^{-1}(R)}(x_j)\,,\\
                  \label{eq:207}
    & (b)\ m\le N, \text{ for some $N$  independently of $\varepsilon$},\\
    \label{eq:bound-kappa}
   & (c)\ \kappa_j=\kappa_j^\varepsilon:=\deg(u_\varepsilon/|u_\varepsilon|,\partial
  D_{\tanh^{-1}R}(x_j))\in[1,C], \,\forall j, \text{ for some constant
             }C\ge1,\\
    \label{eq:28}
  & (d)\ R\int_{\partial  B_R}\Big(|\nabla (u_\varepsilon\circ M_{x_j})|^2+((1/\varepsilon^2)-1)
  |\nabla(\rho_\varepsilon\circ M_{x_j})|^2\Big)\le c_4,~\forall j,
     \text{ for some constant }c_4>0.
 \end{align}
		\end{proposition}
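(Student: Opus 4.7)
My approach is a Vitali-type covering combined with energy/topology bookkeeping, in the spirit of Bethuel--Brezis--H\'elein~\cite{BBH} but using the sharper localized lower bound \eqref{eq:Ebad-eps} in place of the Ginzburg--Landau one. \rcor{cor:good}(iii) guarantees that every hyperbolic disc $D_{\tanh^{-1}\rho_1}(a)$ with $a\in S$ carries energy at least $c_3/\varepsilon$. I first extract a maximal disjoint subfamily $\{D_{\tanh^{-1}\rho_1}(x_j^{(0)})\}_{j=1}^{m_0}$ with centers in $S$; maximality together with tripling in the hyperbolic metric gives $S\subset\bigcup_j D_{\tanh^{-1}(3\rho_1)}(x_j^{(0)})$, while the energy bound \eqref{eq:upper} forces $m_0\le c_1/c_3=:N$. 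I then iterate a merging step: whenever two of the (enlarged) discs overlap, I replace them by a single disc of tripled radius centered at one of the bad points. Each merge decreases the count by one and at most triples the radius, so after at most $N$ steps I obtain $m\le N$ mutually disjoint hyperbolic discs $\{D_{\tanh^{-1}R_1}(x_j)\}$ with $x_j\in S$, covering $S$, and with $R_1\le 3^{N+1}\rho_1$. This yields (a) and (b), and, since $R_1\ge\rho_1$, the two-sided bound \eqref{eq:208} (after adjusting $A,B$).

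Next I sharpen the radius to obtain (d), via a dyadic Fubini argument exploiting the conformal invariance of $E_\varepsilon$. Set $r_k:=R_1 2^k$ for $k=0,\ldots,K$ with $K\sim c/\varepsilon$ chosen so that $r_K$ remains exponentially small in $\varepsilon$. For each $j$, the conformally transported map $v_j:=u_\varepsilon\circ M_{x_j}$ has total energy at most $c_1/\varepsilon$, hence
\[
\sum_{k=0}^{K-1}\sum_{j=1}^{m}\int_{B_{r_{k+1}}\setminus B_{r_k}}\Bigl(|\nabla v_j|^2+\bigl((1/\varepsilon^2)-1\bigr)|\nabla|v_j||^2\Bigr)\;\le\;\frac{Nc_1}{\varepsilon}.
\]
Pigeonhole in $k$, together with $K\sim c/\varepsilon$, produces a level $k^*$ for which the inner $j$-sum is bounded by an $\varepsilon$-independent constant; since $m\le N$, every one of the $m$ annular summands is $O(1)$. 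A Chebyshev step inside $[r_{k^*},r_{k^*+1}]$, again using $m\le N$, then yields a single radius $R\in[r_{k^*},r_{k^*+1}]$ for which
\[
R\int_{\partial B_R}\Bigl(|\nabla v_j|^2+\bigl((1/\varepsilon^2)-1\bigr)|\nabla|v_j||^2\Bigr)\;\le\; c_4
\]
holds simultaneously for all $j$; this is (d), and $R_1\le R\le r_K$ preserves \eqref{eq:208}.

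Finally I address the degree bound (c). Since the discs cover $S$, $|u_\varepsilon|\ge\beta$ on each $\partial D_{\tanh^{-1}R}(x_j)$, so each $\kappa_j\in\Z$ is well defined and $\sum_j\kappa_j=D$ by topology. A local version of the Cauchy--Schwarz/coarea argument in \rprop{prop:lb}, with the $t$-integration restricted to $(0,\beta)$ (so that the level sets $\{|u_\varepsilon|=t\}$ lie inside the disc), gives
\[
\int_{D_{\tanh^{-1}R}(x_j)}\Bigl(|\nabla u_\varepsilon|^2+\bigl((1/\varepsilon^2)-1\bigr)|\nabla\rho_\varepsilon|^2\Bigr)\;\ge\;\frac{2\pi\beta^2|\kappa_j|}{\varepsilon}.
\]
Summing and comparing to \eqref{eq:42} yields $2\pi\beta^2\sum_j|\kappa_j|\le 2\pi D+O(\varepsilon)$; combined with $\sum_j\kappa_j=D$, integrality of the $\kappa_j$, and the hypothesis \eqref{eq:16}, this forces, for $\varepsilon$ small, $\kappa_j\ge 0$ for all $j$ and $|\kappa_j|\le D=:C$. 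To exclude $\kappa_j=0$ I use a competitor argument: if some $\kappa_j=0$, then $u_\varepsilon$ admits a global lifting $\rho_\varepsilon e^{i\varphi}$ on $\partial D_{\tanh^{-1}R}(x_j)$; let $\tilde\rho,\tilde\varphi$ be the harmonic extensions inside, and replace $u_\varepsilon$ on the disc by $\tilde\rho e^{i\tilde\varphi}$. By \eqref{eq:H2} (used as in \eqref{eq:3122}), conformal invariance, and property (d), the competitor carries only $O(1)$ energy on the disc, whereas $u_\varepsilon$'s energy there is at least $c_3/\varepsilon$ by \eqref{eq:Ebad-eps}; for $\varepsilon$ small this strictly decreases the total energy, contradicting minimality, and forces $\kappa_j\ge 1$. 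The most delicate step is this last one: ruling out zero-degree discs simultaneously requires the degree lower bound, the hypothesis \eqref{eq:16}, the sharp localized bound \eqref{eq:Ebad-eps}, and the harmonic-extension competitor controlled via (d).
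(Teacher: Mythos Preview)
Your strategy matches the paper's (Vitali, iterative merging, Fubini selection for (d), coarea and harmonic-extension competitor for (c)); the only structural difference is that the paper merges by \emph{squaring} the radius ($\rho_k\mapsto\rho_{k+1}=\sqrt{\rho_k}$), so that the Fubini annulus $(\rho_{l-1},\rho_l)$ is produced by the merge itself, whereas you triple in the merge and then run a separate dyadic pigeonhole.

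That decoupling creates a gap you do not close. After your merge only the $R_1$-discs are known to be mutually disjoint, while your Fubini selects $R\in[R_1,2^K R_1]$ with $K\sim c/\varepsilon$, so $R/R_1$ can be exponentially large in $1/\varepsilon$ and you never verify that the $D_{\tanh^{-1}R}(x_j)$ are disjoint. Disjointness is part of the statement, and it is exactly what you use when you sum the local coarea lower bounds to obtain $2\pi\beta^2\sum_j|\kappa_j|\le 2\pi D+O(\varepsilon)$; without it a point of $S$ coming from another disc could even lie on $\partial D_{\tanh^{-1}R}(x_j)$, so $\kappa_j$ need not be well-defined. The remedy is simple but must be stated: run the merge with respect to the \emph{outer} scale $r_K=2^K R_1$ (drop a center whenever two $r_K$-discs overlap, enlarging the covering radius accordingly); since each merge reduces $m$ by one the loop still terminates in at most $N$ steps, and $r_K$ remains exponentially small provided $c$ is chosen small enough relative to $c_2$. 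Then $R\le r_K$ gives disjointness while $R\ge R_1$ preserves the covering of $S$. The paper's squaring merge has the advantage that a single annulus of logarithmic width $\sim 1/\varepsilon$ is created in the same step as the merge, so this issue is absorbed into one extra pass of the loop. (Minor aside: in your Vitali step the correct enlargement is $D_{3\tanh^{-1}\rho_1}$ rather than $D_{\tanh^{-1}(3\rho_1)}$; harmless here since $\rho_1\to 0$.)
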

\begin{proof}
 The proof is divided to several steps.

\medskip\noindent\underline{\color{black}Step 1: Select an initial collection of bad discs.} Let $\rho_1=\rho_1(\varepsilon)$ be as defined in \rcor{cor:good}. Applying Vitali covering lemma for the collection of discs
$\{D_{\tanh^{-1}\rho_1}(x)\}_{x\in S}$ yields a finite collection of
mutually disjoint hyperbolic discs
$\{D_{\tanh^{-1}\rho_1}(x_j)\}_{j=1}^{m_\varepsilon}$, with
$\{x_j\}_{j=1}^{m_\varepsilon}=\{x_j^{(\varepsilon)}\}_{j=1}^{m_\varepsilon}\subset
S$, such that
\begin{equation}
  \label{eq:210}
  S\subset\bigcup_{j=1}^{m_\varepsilon} D_{5\tanh^{-1}\rho_1}(x_j)\,.
\end{equation}
Moreover, thanks to \eqref{eq:Ebad-eps} and \eqref{eq:upper} we have
$m_\varepsilon\le N$ for some $N$, for all $\varepsilon$.

\medskip\noindent\underline{\color{black}Step 2: Construct the final collection of bad discs.} First, we extend $\rho_0$ and $\rho_1$ from \rcor{cor:good} to an infinite sequence by
setting
\begin{equation}
  \label{eq:7}
  \rho_j=\rho_{j-1}^{1/2}\,,~j=2,3,\ldots, \text{ i.e., }\rho_j=\exp(-\frac{c_2}{2^j\varepsilon})\,.
\end{equation}
Consider the collection $\{D_{\tanh^{-1}\rho_2}(x_j)\}_{j=1}^m$ that
clearly covers $S$ by \eqref{eq:210}, since $\tanh^{-1}\rho_2\gg 5\tanh^{-1}(\rho_1)$. If
the discs are mutually disjoint we are done. Otherwise, if for example
$D_{\tanh^{-1}\rho_2}(x_1)\cap D_{\tanh^{-1}\rho_2}(x_2)\ne\emptyset$,
we keep $D_{\tanh^{-1}\rho_2}(x_1)$ and drop
$D_{\tanh^{-1}\rho_2}(x_2)$. We relabel the new centers and keep the
same notation for $m=m_\varepsilon$ (which is strictly smaller than the original
one) and consider the new collection
$\{D_{\tanh^{-1}\rho_3}(x_j)\}_{j=1}^m$ that also covers $S$. If these discs are all
mutually disjoint we are done, otherwise we eliminate some discs, taking
into account the intersections. We continue in this way until we reach $l$ for which
\begin{equation}
  \label{eq:8}
  S\subset\bigcup_{j=1}^m D_{\tanh^{-1}\rho_{l-1}}(x_j)~\text{ and
  }\{D_{\tanh^{-1}\rho_{l-1}}(x_j)\}_{j=1}^m\text{ are mutually disjoint.}
 \end{equation}
 This process must stop after at most $N$ steps ($N$ is given below
 \eqref{eq:210}).

\par
Let us assume for a moment that $x_j=0$, and then
$D_{\tanh^{-1}\rho_{l-1}}(0)=B_{\rho_{l-1}}$. By the upper bound
\eqref{eq:upper} we 
have
\begin{equation}
  \label{eq:9}
  \int_{B_{\rho_{l}}\setminus B_{\rho_{l-1}}}\!\!\!\!|\nabla
  u|^2+((1/\varepsilon^2)-1)|\nabla\rho|^2=\int_{\exp(-\frac{c_2}{2^{l-1}\varepsilon})}^{\exp(-\frac{c_2}{2^{l}\varepsilon})}
  \Big(\int_{\partial B_r}\big(|\nabla u|^2+((1/\varepsilon^2)-1) |\nabla\rho|^2\big)\Big)\,dr
  \le \frac{c_1}{\varepsilon}\,.
\end{equation}
Therefore, we can find $R=R^{(\varepsilon)}\in
\big(\exp(-\frac{c_2}{2^{l-1}\varepsilon}),
\exp(-\frac{c_2}{2^{l}\varepsilon})\big)$ such that
\begin{equation}
  \label{eq:10}
  R\int_{\partial  B_R}\Big(|\nabla
  u|^2+((1/\varepsilon^2)-1) |\nabla\rho|^2\Big)\le \frac{2^{l}c_1}{c_2}:=c_4.
\end{equation}
For a general $x_j$ (not necessarily $x_j=0$) we apply the above to
$\tilde u:=u\circ M_{x_j}$. This gives first $R_j$ such that
\eqref{eq:10} holds for $\tilde u$. Actually we can apply the argument
in a way that insures that the same $R^{(\varepsilon)}:=R=R_j$ works for all $j$, so that
\eqref{eq:28} holds true. This completes the construction of the bad
discs $\{D_{\tanh^{-1}R}(x_j)\}_{j=1}^m$.

\medskip\noindent\underline{\color{black}Step 3: Verify \eqref{eq:bound-kappa}.} Since $\rho\ge\beta$ on $\partial D_{\tanh^{-1}(R)}(x_j) $ for
all $j$, the degree
 $$\kappa_j=\kappa_j^\varepsilon=\deg(u/|u|,\partial D_{\tanh^{-1}(R)}(x_j))\in\Z$$ is well defined and we may write
 \begin{equation*}
   u=\rho e^{i(\kappa_j\theta+\eta)} \text{ on }\partial D_{\tanh^{-1}(R)}(x_j)\,,~j=1,\ldots,m,
 \end{equation*}
 for some scalar function $\eta$.
We first claim that
\begin{equation}
  \label{eq:14}
  |\kappa_j|\le C\,,~\forall j,
\end{equation}
 for some $C$, independently of $\varepsilon$. Again, it suffices to
 consider the case $x_j=0$. The only
 interesting case is when $\kappa_j\ne0$. Applying the argument used in
 the proof of \rprop{prop:lb} yields, denoting this time
 $\gamma_t=\{x\in B_R\,:\,|u|=t\}$,
\begin{equation}
	\label{eq:15}
E_\varepsilon(u;B_R)\ge \frac{2}{\varepsilon}\int_0^\beta
\int_{\gamma_t} t|\nabla (u/|u|)|\,dx\,dt\geq\frac{2}{\varepsilon}
(2\pi|\kappa_j|)\int_0^\beta
t\,dt=\frac{2\pi|\kappa_j|\beta^2}{\varepsilon}> \frac{\pi|\kappa_j|}{\varepsilon}\,,
\end{equation}
since $\beta^2> 1/2$. Our claim \eqref{eq:14} clearly follows from
\eqref{eq:15} and the upper bound \eqref{eq:upper}.

To conclude we show:
\begin{equation}
  \label{eq:17}
  \kappa_j> 0,~\forall j.
\end{equation}
     We first show the weaker inequality
     \begin{equation}
       \label{eq:99}
       \kappa_j\ge 0,~\forall j.
     \end{equation}
   Indeed,  combining \eqref{eq:97} with \eqref{eq:15} yields
    \begin{equation}
      \label{eq:19}
     \frac{2\pi D}{\varepsilon}+C\ge E_\varepsilon(u_\varepsilon)
     \ge\sum_{j=1}^m E_\varepsilon(u_\varepsilon;D_{\tanh^{-1}R}(x_j))\geq\frac{2\pi\beta^2}{\varepsilon}\sum_{j=1}^m|\kappa_j|\,.
   \end{equation}
   Sending $\varepsilon$ to $0$ in \eqref{eq:19} gives
   \begin{equation}
     \label{eq:20}
     D\ge\beta^2 \sum_{j=1}^m|\kappa_j|.
   \end{equation}
   Combining \eqref{eq:20} with \eqref{eq:16} yields
   $$
   D=|\sum_{j=1}^m \kappa_j|\le\sum_{j=1}^m |\kappa_j|<D+1\,.
   $$
   Therefore, necessarily $\sum_{j=1}^m |\kappa_j|=D$, implying that  
 $\kappa_j=|\kappa_j|$ for all $j$, and \eqref{eq:99} follows.

 To prove that the inequality in \eqref{eq:99} is strict, i.e.,
 \eqref{eq:17} holds, 
    we fix one $j$ for which we may assume  w.l.o.g.~that
    $x_j=0$. Looking for contradiction, suppose that
    $\kappa_j=0$. Then we may write $u=\rho e^{i\varphi}$ on $\partial
    B_R$ and let again $\widetilde\rho$ and $\widetilde\varphi$ denote,
    respectively, the harmonic extensions of $\rho$ and $\varphi$ to
    $B_R$. Analogously to \eqref{eq:3122} we obtain, using
    \eqref{eq:28},  that
    \begin{equation}
      \label{eq:98}
      \int_{B_R}|\nabla
      u|^2+\left(\frac{1}{\varepsilon^2}-1\right)|\nabla\rho|^2\le \int_{B_R}|\nabla
      \widetilde\varphi|^2+\frac{1}{\varepsilon^2}|\nabla\widetilde\rho|^2\le
      2c_4.
    \end{equation}
    But the  assumption $|u(0)|<\beta$ implies by
    \eqref{eq:Ebad-eps} that
    \begin{equation*}
      E_\varepsilon(u_\varepsilon;B_R)\ge E_\varepsilon(u_\varepsilon;B_{\rho_1})\ge\frac{c_3}{\varepsilon},
    \end{equation*}
     which clearly contradicts\eqref{eq:98}, for sufficiently small
     $\varepsilon$. \qed
   \end{proof}
   \subsection{Control of the phase oscillations away from the bad
     discs}
   To prove convergence of $\ue$ away from the bad discs the main
   difficulty is to prove  a bound on the oscillations of the
   phase. For that matter we shall use an
   appropriate 
   modification of the strategy employed in \cite{ms} for a different
   problem.  
We denote
\begin{equation}
  \label{eq:39}
  \Omega_\varepsilon=B_1\setminus\bigcup_{j=1}^m
D_{\tanh^{-1}R}(x_j)\,.
\end{equation}
 Whenever there is no
 confusion we shall drop the subscript $\varepsilon$. On $\Omega_\varepsilon$ we may write
\begin{equation}
  \label{eq:21}
  u(z)=\rho e^{i\eta(z)}\prod_{j=1}^m\left(\frac{M_{-x_j}(z)}{|M_{-x_j}(z)|}\right)^{\kappa_j}\,,
\end{equation}
 for some scalar function $\eta=\eta_\varepsilon$, which is unique up
 to addition of an integer multiple of $2\pi$. 
By adding an appropriate multiple of $2\pi$ we may assume then ~that 
\begin{equation}
\label{eq:fix}
\min_{\partial B_1}\eta\in[0,2\pi).
\end{equation}
Since $g$ is smooth, we deduce from \eqref{eq:fix} that
\begin{equation}
  \label{eq:107}
 \|\eta\|_{L^\infty(\partial B_1)}\leq C(g).
\end{equation}
By \eqref{eq:bound-kappa} and \eqref{eq:28}  it follows that
\begin{equation}
  \label{eq:27}
  |\eta(x)-\eta(y)|\le C,\quad \text{ for all }x,y\in \partial D_{\tanh^{-1}R}(x_j),\,j=1,\ldots,m.
\end{equation}
We shall use the following estimate for $\int_{\Omega_\varepsilon}
|\nabla\eta|^2$.
\begin{lemma}
  \label{lem:eta-energy}
  We have
  \begin{equation}
    \label{eq:22}
    \int_{\Omega_\varepsilon}|\nabla\eta|^2\leq \frac{C}{\varepsilon}\,.
  \end{equation}
  \end{lemma}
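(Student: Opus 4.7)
The plan is to write $\nabla\eta = \nabla\Psi - \nabla\arg B$ on $\Omega_\varepsilon$, where $B(z)=\prod_{j=1}^{m} M_{-x_j}(z)^{\kappa_j}$ is the holomorphic Blaschke-type factor appearing in \eqref{eq:21} and $\Psi = \eta + \arg B$ is the (locally defined) full phase of $u_\varepsilon/|u_\varepsilon|$, and to bound each of $\int_{\Omega_\varepsilon}|\nabla\Psi|^2$ and $\int_{\Omega_\varepsilon}|\nabla\arg B|^2$ by $C/\varepsilon$. Although $\Psi$ and $\arg B$ are multi-valued, their gradients are globally well-defined vector fields on $\Omega_\varepsilon$ (where $u_\varepsilon\neq 0$). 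The key identity, obtained by writing $u_\varepsilon = \rho e^{i\Psi}$, is
\begin{equation*}
|\nabla u_\varepsilon|^2 + \left(\tfrac{1}{\varepsilon^2}-1\right)|\nabla\rho|^2 \;=\; \frac{|\nabla\rho|^2}{\varepsilon^2} + \rho^2|\nabla\Psi|^2.
\end{equation*}

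For the first piece, since $\rho\ge\beta$ on $\Omega_\varepsilon$ by the definition of the bad discs, combining the above identity with the upper bound of \rcor{cor:energy} gives
\begin{equation*}
\beta^2\int_{\Omega_\varepsilon}|\nabla\Psi|^2 \;\le\; \int_{\Omega_\varepsilon}\rho^2|\nabla\Psi|^2 \;\le\; E_\varepsilon(u_\varepsilon) \;\le\; \frac{2\pi D}{\varepsilon}+C\,,
\end{equation*}
hence $\int_{\Omega_\varepsilon}|\nabla\Psi|^2 \le C/\varepsilon$.

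For the second piece, since $B$ is holomorphic and nonvanishing on $\Omega_\varepsilon$, the Cauchy--Riemann equations yield $|\nabla\arg B|^2 = |\nabla\log|B||^2 = |B'/B|^2$. From $B'/B = \sum_{j=1}^m \kappa_j\,M'_{-x_j}/M_{-x_j}$ and Cauchy--Schwarz in this finite sum, $|B'/B|^2 \le m\sum_j \kappa_j^2|M'_{-x_j}/M_{-x_j}|^2$. For each $j$, the M\"obius map $w = M_{-x_j}(z)$ is an automorphism of $B_1$ that sends $D_{\tanh^{-1}R}(x_j)$ onto $B_R$ and, by the mutual disjointness of the bad discs (which are hyperbolic discs of the same radius), sends the remaining bad discs into $B_1\setminus B_R$; thus $M_{-x_j}(\Omega_\varepsilon)\subset B_1\setminus B_R$. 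Changing variables via $dA_w = |M'_{-x_j}(z)|^2\,dA_z$,
\begin{equation*}
\int_{\Omega_\varepsilon}\left|\frac{M'_{-x_j}}{M_{-x_j}}\right|^2 dA_z \;=\; \int_{M_{-x_j}(\Omega_\varepsilon)}\frac{dA_w}{|w|^2} \;\le\; \int_{B_1\setminus B_R}\frac{dA_w}{|w|^2} \;=\; 2\pi\log(1/R)\,,
\end{equation*}
which is $O(1/\varepsilon)$ by \eqref{eq:208}. The uniform bounds $m\le N$ and $\kappa_j\le C$ from \rprop{prop:bad-disc} then give $\int_{\Omega_\varepsilon}|\nabla\arg B|^2 \le C/\varepsilon$. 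Combining with $|\nabla\eta|^2\le 2|\nabla\Psi|^2 + 2|\nabla\arg B|^2$ yields \eqref{eq:22}.

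No genuine obstacle arises in this argument: the estimate is essentially bookkeeping that combines the sharp upper bound $E_\varepsilon(u_\varepsilon)\le 2\pi D/\varepsilon + C$, the lower bound $\rho\ge\beta$ on $\Omega_\varepsilon$, the explicit structural decomposition \eqref{eq:21}, the $\varepsilon$-uniform control of $m$, $\kappa_j$, and $\log(1/R)$, together with the conformal change of variables that turns the singular integrand $|M'_{-x_j}/M_{-x_j}|^2$ on $\Omega_\varepsilon$ into the standard $1/|w|^2$ on the annulus $B_1\setminus B_R$. The mildly delicate point is ensuring $M_{-x_j}(\Omega_\varepsilon)\subset B_1\setminus B_R$, which is guaranteed precisely by the disjointness property built into the construction of the bad discs.
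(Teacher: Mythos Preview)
Your proof is correct and follows essentially the same approach as the paper: decompose $\nabla\eta=\nabla\Psi-\sum_j\kappa_j\nabla\theta_j$, bound $\int_{\Omega_\varepsilon}|\nabla\Psi|^2$ via $\rho\ge\beta$ and the energy upper bound, and bound each $\int_{\Omega_\varepsilon}|\nabla\theta_j|^2$ by the conformal change of variables $w=M_{-x_j}(z)$ giving $2\pi\ln(1/R)\le C/\varepsilon$. One small remark: the inclusion $M_{-x_j}(\Omega_\varepsilon)\subset B_1\setminus B_R$ does not require the mutual disjointness of the bad discs --- it follows immediately from $\Omega_\varepsilon\subset B_1\setminus D_{\tanh^{-1}R}(x_j)$ and $M_{-x_j}\big(D_{\tanh^{-1}R}(x_j)\big)=B_R$, so that part of your justification is unnecessary (though harmless).
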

  \begin{proof}
  By the upper bound \eqref{eq:upper}, the representation
  \eqref{eq:21} and \eqref{eq:bound-kappa}  it suffices to show that
  \begin{equation}
    \label{eq:23}
    \int_{\Omega_\varepsilon}\left|\nabla\left(\frac{M_{-x_j}(z)}{|M_{-x_j}(z)|}\right)\right|^2\le \frac{C}{\varepsilon}\,,~j=1,\ldots,m.
  \end{equation}
  In fact,  \eqref{eq:23} follows easily by using conformal invariance:
  \begin{equation}
    \label{eq:24}
  \begin{aligned}
    \int_{\Omega_\varepsilon}\left|\nabla\left(\frac{M_{-x_j}(z)}{|M_{-x_j}(z)|}\right)\right|^2&\le
    \int_{B_1\setminus
                                                                                                  D_{\tanh^{-1}R}(x_j)}\left|\nabla\left(\frac{M_{-x_j}(z)}{|M_{-x_j}(z)|}\right)\right|^2\\
    &=\int_{B_1\setminus B_R}\left|\nabla\left(\frac{z}{|z|}\right)\right|^2=2\pi\ln\frac{1}{R}\le\frac{C}{\varepsilon}.
  \end{aligned}
\end{equation}
\qed
\end{proof}

Our first step consists of proving an $L^\infty$ bound for $\eta$. We
 will use the method of  selection of ``good rays'', that was  introduced in
 \cite{ms}. This will be done by removing from 
  $\Omega_\ve$ a collection of ``rays'', that in our settings will be
  usually arcs of circles orthogonal to $\partial B_1$, 
   connecting the boundaries 
 of  the holes $\partial D_{\tanh^{-1}R}(x_j)$, $j=1,\ldots, m$,  to the boundary of $B_1$. The choice of these \enquote{good
 rays} will depend on energy considerations.
 Consider first the case where $x_j=0$. For any $\alpha\in[0,2\pi)$
 let
 \begin{equation}
   \label{eq:26}
{\mathcal C}_0(\alpha):=\{ re^{i\alpha}:\,
 r\in[R,1)\}.
 \end{equation}
 In the general case, when $x_j$ is any point in $B_1$, we set
 \begin{equation}
 \label{eq:25}
 {\mathcal C}_{x_j}(\alpha):=\{ M_{x_j}(re^{i\alpha});\, r\in[R,1)\}.
 \end{equation}
Note that for $x_j\ne0$ the set ${\mathcal C}_{x_j}(\alpha)$ is 
an arc of a circle joining $x_j$ to $\partial B_1$ which
is orthogonal to $\partial B_1$ (a geodesic for the hyperbolic metric).

\begin{lemma}
\label{lem:alpha}
There exists $C>0$ such that for each $j=1,\ldots,m$ and
$\varepsilon\in(0,1/2)$ there exists $\alpha_j=\alpha_j
(\ve)\in[0,2\pi)$, such that the following holds,
\begin{equation}
\label{eq:ray}
|\eta(x)-\eta(y)|\le\frac{C}{\varepsilon},\quad \text{ for all }x,y\in
{\mathcal C}_{x_j}(\alpha_j)\cap\Omega_\varepsilon.
\end{equation}
\end{lemma}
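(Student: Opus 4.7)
\textbf{Proof plan for \rlemma{lem:alpha}.}

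By conformal invariance I would fix $j$ and reduce to the case $x_j=0$ using the M\"obius transformation $M_{-x_j}$: the ray ${\mathcal C}_{x_j}(\alpha)$ becomes the radial segment $\{re^{i\alpha}:r\in[R,1)\}$, the bad disc $D_{\tanh^{-1}R}(x_j)$ becomes $B_R$, and the remaining bad discs are sent to mutually disjoint Euclidean discs $\{B_{r'_i}(y'_i)\}_{i\ne j}$ contained in $B_1\setminus\overline{B_R}$ (hyperbolic discs are Euclidean discs, and disjointness is preserved). The function $\widetilde\eta:=\eta\circ M_{x_j}$ is well defined on the transformed domain $\widetilde\Omega_\varepsilon$, and since the Dirichlet energy is conformally invariant in two dimensions, \rlemma{lem:eta-energy} gives $\int_{\widetilde\Omega_\varepsilon}|\nabla\widetilde\eta|^2\le C/\varepsilon$.

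Set $I_\alpha:=\{r\in[R,1):re^{i\alpha}\in\widetilde\Omega_\varepsilon\}$. Writing the energy in polar coordinates and using Fubini,
\begin{equation*}
\int_0^{2\pi}\Big(\int_{I_\alpha}|\partial_r\widetilde\eta(re^{i\alpha})|^2\,r\,dr\Big)\,d\alpha
\le \int_{\widetilde\Omega_\varepsilon\cap(B_1\setminus B_R)}|\nabla\widetilde\eta|^2\le \frac{C}{\varepsilon}\,.
\end{equation*}
By Chebyshev's inequality, the set of $\alpha\in[0,2\pi)$ for which $\int_{I_\alpha}|\partial_r\widetilde\eta|^2 r\,dr\le C'/\varepsilon$ has measure at least $\pi$; removing the (measure-zero) set of angles for which $C_0(\alpha)$ is tangent to some $\partial B_{r'_i}(y'_i)$, I can pick such an $\alpha=\alpha_j$ for which $I_{\alpha_j}$ is moreover a finite union of open subintervals of $(R,1)$. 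Now for any two points on a \emph{single} component of $I_{\alpha_j}$, Cauchy--Schwarz gives
\begin{equation*}
|\widetilde\eta(x)-\widetilde\eta(y)|\le \int_{|x|}^{|y|}|\partial_r\widetilde\eta|\,dr\le\Big(\int_{I_{\alpha_j}}|\partial_r\widetilde\eta|^2 r\,dr\Big)^{1/2}\Big(\int_R^1\frac{dr}{r}\Big)^{1/2}\le \Big(\frac{C'}{\varepsilon}\Big)^{1/2}\Big(\frac{A}{\varepsilon}\Big)^{1/2}\le\frac{C''}{\varepsilon},
\end{equation*}
where I used $\ln(1/R)\le A/\varepsilon$ from \eqref{eq:208}.

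For two points on \emph{different} components of $I_{\alpha_j}$, I would connect them by a path that follows the ray on each component and detours along (half of) each circle $\partial B_{r'_i}(y'_i)$ that separates them; these detours lie in $\widetilde\Omega_\varepsilon$ thanks to the disjointness of the transformed bad discs. The estimate \eqref{eq:27}, which is also invariant under the conformal reduction, bounds the oscillation of $\widetilde\eta$ on each such circle by a constant, and since there are at most $N$ such detours (by \eqref{eq:207}), their total contribution is $\le NC$. Combining this with the piecewise bound above yields $|\widetilde\eta(x)-\widetilde\eta(y)|\le C''/\varepsilon+NC\le C/\varepsilon$, which upon undoing the conformal transformation gives \eqref{eq:ray}.

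The principal obstacle is the possibility that the chosen ray crosses other bad discs, so that $C_{x_j}(\alpha_j)\cap\Omega_\varepsilon$ is disconnected; it would be natural to try to choose $\alpha_j$ avoiding all other bad discs, but the hyperbolic ``angular size'' of each $D_{\tanh^{-1}R}(x_i)$ viewed from $x_j$ need not be small, so the total angular measure to avoid can exceed $2\pi$. The key observation that resolves this difficulty is that the a priori bound \eqref{eq:27}, coming from the energy estimate \eqref{eq:28} on each bad circle, is already of order $1$ (independent of $\varepsilon$), so the cumulative cost of the detours is swamped by the ray contribution of order $1/\varepsilon$.
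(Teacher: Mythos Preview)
Your argument is correct and follows the same line as the paper: reduce to $x_j=0$ by conformal invariance, use Fubini on the energy bound \eqref{eq:22} to select a good angle $\alpha_j$, and apply Cauchy--Schwarz together with $\ln(1/R)\le A/\varepsilon$ from \eqref{eq:208} to bound $\int_{I_{\alpha_j}}|\partial_r\widetilde\eta|\,dr$ by $C/\varepsilon$.

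The one genuine difference is your treatment of the disconnectedness of ${\mathcal C}_{x_j}(\alpha_j)\cap\Omega_\varepsilon$. The paper's proof simply records the bound on $\int|\partial_r\widetilde\eta|\,dr$ over this set and asserts \eqref{eq:ray} without further comment; strictly speaking, if the ray crosses another bad disc, the fundamental theorem of calculus alone does not give the oscillation bound between points in different components. You correctly identify this gap and close it by detouring along the circles $\partial D_{\tanh^{-1}R}(x_i)$ and invoking \eqref{eq:27}, at a total cost $O(N)\ll C/\varepsilon$. The paper effectively absorbs this step into the proof of the next lemma (\rlemma{lem:MP}), where \eqref{eq:27} is combined with \eqref{eq:ray} to bound $\|\eta\|_{L^\infty(\partial\omega_\varepsilon)}$; so the overall argument in the paper is complete, just organized slightly differently. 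Your version has the virtue of making \eqref{eq:ray} hold literally as stated, for all pairs $x,y$ in the (possibly disconnected) intersection.
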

\begin{proof}
By \eqref{eq:22} 
 there exists $\alpha_j\in[0,2\pi)$ such that
 \begin{equation}
 \label{eq:choice}
\int_{C_0(\alpha_j)\cap M_{-x_j}(\Omega_\varepsilon)}|\nabla(\eta\circ M_{x_j})|^2\,rdr\leq
\frac{1}{2\pi} \int_{M_{-x_j}(\Omega_\varepsilon)}|\nabla(\eta\circ M_{x_j})|^2
=\frac{1}{2\pi} \int_{\Omega_\varepsilon}|\nabla\eta|^2\le\frac{C}{\varepsilon}.
\end{equation}
Therefore, 
\begin{align*}
\int_{C_0(\alpha_j)\cap M_{-x_j}(\Omega_\varepsilon)} \left|\frac{\partial(\eta\circ
    M_{x_j})}{\partial r}\right|&\leq
\left(\int_{R}^{1}\frac{dr}{r}\right)^{1/2} 
\left(\int_{C_0(\alpha_j)\cap M_{-x_j}(\Omega_\varepsilon)} \left|\frac{\partial(\eta\circ
      M_{x_j})}{\partial r}\right|^2\,rdr\right)^{1/2}
\\
&\leq \left(\ln\left(\frac{1}{R}\right)\right)^{1/2}\left(\frac{C}{\varepsilon}\right)^{1/2}\le\frac{C}{\ve}.
\end{align*}
 Here, $\p/\p r$ stands for the tangential derivative along $C_0(\alpha_j)$.
 It follows that
 \begin{equation*}
|(\eta\circ M_{x_j})(x)-(\eta\circ M_{x_j})(y)|\le\frac{C}{\varepsilon},\quad \text{ for all }x,y\in C_0(\alpha_j)\cap M_{-x_j}(\Omega_\varepsilon),
\end{equation*}
which is clearly equivalent to \eqref{eq:ray}. \qed 
\end{proof}
 Next, we denote 
 $\omega_\ve:=\Omega_\ve\setminus\displaystyle\bigcup_{j=1}^m
 {\mathcal C}_{x_j}(\alpha_j).$
      For each $j$, let $\theta_j$ denote a polar coordinate around the
  point $x_j$, taking values in $[\alpha_j,\alpha_j+2\pi)$ associated
  with the factor $\frac{M_{-x_j}z}{|{M_{-x_j}z}|}$, i.e.,
  \begin{equation}
    \label{eq:29}
    \frac{M_{-x_j}z}{|{M_{-x_j}z}|}=e^{i \theta_j(z)}\,.
  \end{equation}
  Then the function
 \begin{equation}
 \label{eq:Theta}
\Theta=\sum_{j=1}^m \kappa_j\theta_j,
\end{equation}
is  {\it smooth} in  $\omega_\ve$ and satisfies
\begin{equation}
\label{eq:bound-theta}
\|\Theta\|_{L^\infty(\omega_\ve)}\leq 4\pi \sum_{j=1}^m |\kappa_j|.
\end{equation}
We define $\va=\va_\ve:=\eta+\Theta$ in $\omega_\ve$, so that 
\begin{equation*}
u=\rho e^{i\eta}\prod_{j=1}^m
\left(\frac{M_{-x_j}z}{|{M_{-x_j}z}|}\right)^{\kappa_j}=\rho e^{i
  (\Theta+\eta)}=\rho e^{i\va}\text{ in }\omega_\ve.
\end{equation*}
Hence $\va$ is a well-defined phase of $u$ in $\omega_\ve$.
\begin{lemma}
  \label{lem:MP}
  We have for all $0<\varepsilon<1/2$:
  \begin{equation}
    \label{eq:30}
    \|\eta_\varepsilon\|_{L^\infty(\omega_\varepsilon)}\le\frac{C}{\varepsilon}\,.
  \end{equation}
\end{lemma}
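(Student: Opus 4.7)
The plan is to prove the stronger estimate $\|\va_\ve\|_{L^\infty(\omega_\ve)} \le C/\ve$, from which \eqref{eq:30} follows since $\eta_\ve = \va_\ve - \Theta$ and $\|\Theta\|_{L^\infty(\omega_\ve)} \le C$ by \eqref{eq:bound-theta} and \eqref{eq:bound-kappa}. Since $\omega_\ve$ is simply connected by construction and $\rho_\ve \ge \beta > 0$ on $\omega_\ve$, the phase $\va_\ve$ is a single-valued smooth function there, and the first equation in \eqref{eq:EL} (which is valid wherever $\ue$ does not vanish) yields
\begin{equation*}
\Div(\rho_\ve^2 \na \va_\ve) = 0 \quad \text{in } \omega_\ve.
\end{equation*}
As $\beta^2 \le \rho_\ve^2 \le 1$ on $\omega_\ve$, this is a uniformly elliptic equation in divergence form with smooth coefficients, so the weak maximum principle reduces the problem to bounding $|\va_\ve|$ on $\partial \omega_\ve$ by $C/\ve$. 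The boundary $\partial \omega_\ve$ decomposes into three types of pieces: the outer circle $\partial B_1$, the inner circles $\partial D_{\tanh^{-1} R}(x_j)$, and the two sides of each cut $\mathcal{C}_{x_j}(\alpha_j)$.

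For the boundary estimate I would proceed in three steps. On $\partial B_1$, \eqref{eq:107} gives $\|\eta_\ve\|_{L^\infty(\partial B_1)} \le C$, and $\|\Theta\|_{L^\infty} \le C$, so $|\va_\ve| \le C$ there. Propagating along each cut, \eqref{eq:ray} shows that $\eta_\ve$ oscillates by at most $C/\ve$ along $\mathcal{C}_{x_j}(\alpha_j) \cap \Omega_\ve$; starting from a bounded value at the outer endpoint gives $|\eta_\ve| \le C/\ve$ on both sides of the cut, and in particular at the inner endpoint on $\partial D_{\tanh^{-1} R}(x_j)$. The oscillation bound \eqref{eq:27} then extends this estimate to all of $\partial D_{\tanh^{-1} R}(x_j)$, and adding the bounded $\Theta$ yields $|\va_\ve| \le C/\ve$ uniformly on $\partial \omega_\ve$.

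The point I expect to be most delicate is the legitimacy of the weak maximum principle in the slit domain $\omega_\ve$: each cut $\mathcal{C}_{x_j}(\alpha_j)$ contributes two arcs to $\partial \omega_\ve$ carrying two distinct one-sided traces of $\va_\ve$, the traces differing by $2\pi \kappa_j = O(1)$ (because $\ue$ is smooth across the cut in $\Omega_\ve$, so $\eta_\ve$ is continuous there while $\Theta$ jumps by $2\pi \kappa_j$). Both traces are nonetheless controlled by $C/\ve$, which is exactly what is needed for the maximum principle to deliver $\|\va_\ve\|_{L^\infty(\omega_\ve)} \le C/\ve$, after which subtracting the bounded $\Theta$ gives $\|\eta_\ve\|_{L^\infty(\omega_\ve)} \le C/\ve$ as required.
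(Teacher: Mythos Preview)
Your proposal is correct and follows essentially the same route as the paper: bound $\eta$ on $\partial\omega_\ve$ by chaining \eqref{eq:107}, \eqref{eq:ray}, and \eqref{eq:27}, transfer this to a bound on $\va$ via \eqref{eq:bound-theta}, apply the maximum principle to $\va$ (which satisfies $\Div(\rho^2\nabla\va)=0$), and finally subtract $\Theta$. The only stylistic difference is that the paper handles the slit domain by applying the maximum principle on the smooth approximations $\{x\in\omega_\ve:\dist(x,\partial\omega_\ve)>\delta\}$ and letting $\delta\to0$, whereas you invoke two-sided traces on the cuts directly; both are legitimate.
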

\begin{proof}
  First we notice, combining \eqref{eq:107}--\eqref{eq:27} with
  \eqref{eq:ray}, that
  \begin{equation}
  \label{eq:31}
    \|\eta\|_{L^\infty(\partial\omega_\varepsilon)}\le\frac{C}{\varepsilon}\,.
  \end{equation}
Therefore, by the definition of $\varphi$ we have
\begin{equation}
\label{eq:bound-eta-phi}
\limsup_{\delta\to 0}\, \sup\{ |\varphi (x)|:\, x\in\omega_\ve,\, \dist (x, \p\omega_\ve)\le\delta\} 
\leq \frac{C}{\varepsilon}\,.
\end{equation}
We apply the maximum principle to $\varphi$ on each component of the open set
  $\{ x\in\omega_\ve:\, \dist(x, \p\omega_\ve)>\delta\}$, on which
  $\varphi$ satisfies
  \begin{equation*}
    \Div(\rho^2\nabla\varphi)=0\,.
  \end{equation*}
 Then we let $\delta\to 0$ and use \eqref{eq:bound-eta-phi} to obtain
 that
 \begin{equation}
   \label{eq:32}
    \|\varphi\|_{L^\infty(\omega_\varepsilon)}\le\frac{C}{\varepsilon}\,.
 \end{equation}
Finally, \eqref{eq:30} follows from
\eqref{eq:32},\eqref{eq:bound-theta},\eqref{eq:bound-kappa} and the definition
of $\varphi$. \qed
\end{proof}
\subsection{An $L^p$-bound for the gradient, $p\in[1,2)$}
\label{subsec:Lp}
The main result of this subsection is
\begin{proposition}
\label{prop:W1p}
We have 
$\|\nabla u_\varepsilon\|_{L^p(B_1)}\leq C_p$, $1\leq p<2.$ 
\end{proposition}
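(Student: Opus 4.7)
The plan is to split
\[
\int_{B_1}|\nabla u_\varepsilon|^p \;=\; \sum_{j=1}^m \int_{D_{\tanh^{-1}R}(x_j)}|\nabla u_\varepsilon|^p \;+\; \int_{\Omega_\varepsilon}|\nabla u_\varepsilon|^p,
\]
and to show that the bad-disc contribution vanishes in the limit while the contribution from $\Omega_\varepsilon$ stays uniformly bounded. For the bad-disc part, the key point is that each hyperbolic disc equals $D_{\tanh^{-1}R}(x_j)=M_{x_j}(B_R)$, whose Euclidean area is at most $\pi R^2/(1-R)^4\le CR^2\le Ce^{-2B/\varepsilon}$ by \eqref{eq:208}, and is therefore exponentially small. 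Combined with $\int_{D_{\tanh^{-1}R}(x_j)}|\nabla u_\varepsilon|^2\le E_\varepsilon(u_\varepsilon)\le c_1/\varepsilon$ from \eqref{eq:upper}, Hölder's inequality yields
\[
\int_{D_{\tanh^{-1}R}(x_j)}|\nabla u_\varepsilon|^p \le |D_{\tanh^{-1}R}(x_j)|^{1-p/2}\Bigl(\int|\nabla u_\varepsilon|^2\Bigr)^{p/2}\le C e^{-(2-p)B/\varepsilon}\varepsilon^{-p/2}\xrightarrow[\varepsilon\to 0]{} 0,
\]
and summing over the $m\le N$ bad discs (cf.\ \eqref{eq:207}) gives an $o(1)$ contribution.

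For the $\Omega_\varepsilon$-contribution, I will use the representation \eqref{eq:21}, $u_\varepsilon=\rho_\varepsilon e^{i\eta_\varepsilon}h$ with $h=\prod_{j}(M_{-x_j}(z)/|M_{-x_j}(z)|)^{\kappa_j}$, and denote by $\Theta=\sum_j\kappa_j\theta_j$ the locally-defined phase of $h$. Since $\rho_\varepsilon\le 1$,
\[
|\nabla u_\varepsilon|^p \le C_p\bigl(|\nabla\rho_\varepsilon|^p+|\nabla\eta_\varepsilon|^p+|\nabla\Theta|^p\bigr).
\]
The $\rho_\varepsilon$-term is handled by the energy bound: $(1/\varepsilon^2-1)\int|\nabla\rho_\varepsilon|^2\le c_1/\varepsilon$ gives $\int|\nabla\rho_\varepsilon|^2\le C\varepsilon$, hence $\int|\nabla\rho_\varepsilon|^p\le C\varepsilon^{p/2}$. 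The $\Theta$-term reduces, via conformal invariance under each $M_{-x_j}$ and the bound $|\kappa_j|\le C$ from \eqref{eq:bound-kappa}, to the elementary uniform-in-$R$ estimate
\[
\int_{B_1\setminus B_R(0)}|\nabla\theta|^p\,dx \;=\; 2\pi\int_R^1 r^{1-p}\,dr \;\le\; \frac{2\pi}{2-p},\qquad p<2,
\]
which yields $\int_{\Omega_\varepsilon}|\nabla\Theta|^p\le C_p$.

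The main obstacle is to bound $\int_{\omega_\varepsilon}|\nabla\eta_\varepsilon|^p$: the $L^\infty$ bound $\|\eta_\varepsilon\|_{L^\infty(\omega_\varepsilon)}\le C/\varepsilon$ from \rlemma{lem:MP} and the $L^2$ bound $\int|\nabla\eta_\varepsilon|^2\le C/\varepsilon$ from \rlemma{lem:eta-energy} both degrade as $\varepsilon\to 0$, and a direct Hölder argument only gives $\int|\nabla\eta_\varepsilon|^p\le C\varepsilon^{-p/2}$. I plan to close the argument by establishing a pointwise interior gradient bound of the form $|\nabla u_\varepsilon(x)|\le C\sum_j|x-x_j|^{-1}$ on $\Omega_\varepsilon$, which upon integration yields the uniform $L^p$ bound for $p<2$ (since $|z-x_j|^{-1}\in L^p_{\text{loc}}$ for $p<2$). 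This pointwise bound should follow from interior elliptic regularity applied to the coupled Euler--Lagrange system $\Div(\rho_\varepsilon^2\nabla\varphi_\varepsilon)=0$ and $\Delta\rho_\varepsilon=\varepsilon^2\rho_\varepsilon|\nabla\varphi_\varepsilon|^2$ on balls $B(x,d/2)\subset\Omega_\varepsilon$ with $d=\min_j|x-x_j|$, exploiting the uniform bounds $\beta\le\rho_\varepsilon\le 1$ and $|u_\varepsilon|\le 1$ in such balls. The principal technical difficulty is to carry out this regularity with $\varepsilon$-independent constants despite the $1/\varepsilon^2$ coefficient in the energy.
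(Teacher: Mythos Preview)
Your decomposition and the treatment of the bad-disc contribution, the $\nabla\rho_\varepsilon$ term, and the $\nabla\Theta$ term are essentially correct (though for $\Theta$ you should invoke the pointwise bound $|\nabla\theta_j|\le C/|z-x_j|$ of \rlemma{lem:mobius} rather than ``conformal invariance'', which does not preserve $L^p$ norms for $p\neq 2$). The genuine gap is exactly where you locate it: the bound on $\int_{\Omega_\varepsilon}|\nabla\eta_\varepsilon|^p$.

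Your proposed route---a pointwise bound $|\nabla u_\varepsilon(x)|\le C/\min_j|x-x_j|$ via interior elliptic regularity on balls $B(x,d/2)$---does not close with $\varepsilon$-independent constants. On such a ball the equation $\Div(\rho_\varepsilon^2\nabla\varphi_\varepsilon)=0$ has coefficients $\rho_\varepsilon^2\in[\beta^2,1]$ that are merely bounded measurable uniformly in $\varepsilon$; De Giorgi--Nash--Moser theory gives H\"older continuity of $\varphi_\varepsilon$, not a gradient bound. A Lipschitz estimate would require uniform control on $\nabla\rho_\varepsilon$, which is coupled back to $|\nabla\varphi_\varepsilon|^2$ through the second equation---precisely the circularity you want to break. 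Moreover, the only scale-invariant input available is $|u_\varepsilon|\le 1$; the local energy on $B(x,d/2)$ is a priori only $O(1/\varepsilon)$, so any estimate starting from the energy carries an $\varepsilon$-loss. The ``principal technical difficulty'' you flag is therefore not a technicality but the heart of the matter, and your proposal does not resolve it.

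The paper avoids pointwise estimates entirely and works directly at the $L^p$ level via an absorption argument. One multiplies $\eta$ by a cutoff $\xi^2$ vanishing on the bad discs, subtracts $\xi^2 H$ with $H$ the harmonic extension of $\eta\big|_{\partial B_1}$, and writes the elliptic equation satisfied by the difference. Its right-hand side contains, besides controlled remainders, the term $\Div\big((\rho_\varepsilon^2-1)\nabla(\widetilde\eta-\widetilde H)\big)$. The key idea is to rerun the bad-disc construction with a threshold $\tilde\beta$ chosen so close to $1$ that $2A_p(1-\tilde\beta)<1/2$, where $A_p$ is the elliptic constant in \eqref{eq:81}; then $|1-\rho_\varepsilon^2|\le 2(1-\tilde\beta)$ on the support of the gradient, and this term is absorbed on the left, yielding $\|\nabla\widetilde\eta\|_{L^p}\le C$ directly. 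This smallness-of-$(1-\rho^2)$ absorption trick is the missing ingredient in your approach.
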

The following simple
lemma will be needed in the proof of \rprop{prop:W1p}.
 \begin{lemma}
   \label{lem:mobius}
   For every $a\in B_1$ there holds
   \begin{equation}
     \label{eq:37}
     \left|\nabla\left(\frac{M_{-a}(z)}{|M_{-a}(z)|}\right)\right|\le\frac{C}{|z-a|}\,,\quad\forall
     z\in B_1.
   \end{equation}
 \end{lemma}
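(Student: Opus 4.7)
The plan is to reduce the estimate to an explicit computation on the Blaschke factor $M_{-a}(z)=\dfrac{z-a}{1-\bar a z}$ and then exploit the cancellation $1-|a|^2$ at the numerator.

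First I would apply the chain rule to the composition $z\mapsto w:=M_{-a}(z)\mapsto w/|w|$. Since the map $w\mapsto w/|w|$ from $\R^2\setminus\{0\}$ to $S^1$ has Jacobian of operator norm at most $1/|w|$ (indeed $\nabla(w/|w|)=\frac{1}{|w|}\bigl(I-\frac{w\otimes w}{|w|^2}\bigr)$), and since $M_{-a}$ is holomorphic, the chain rule gives the pointwise bound
\begin{equation*}
\left|\nabla\!\left(\frac{M_{-a}(z)}{|M_{-a}(z)|}\right)\right|\le \frac{|M_{-a}'(z)|}{|M_{-a}(z)|}.
\end{equation*}

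Next I would compute the two quantities on the right explicitly. A direct differentiation of the Möbius transformation yields
\begin{equation*}
M_{-a}'(z)=\frac{(1-\bar a z)+\bar a(z-a)}{(1-\bar a z)^2}=\frac{1-|a|^2}{(1-\bar a z)^2},
\qquad |M_{-a}(z)|=\frac{|z-a|}{|1-\bar a z|}.
\end{equation*}
Substituting these into the chain-rule bound gives
\begin{equation*}
\left|\nabla\!\left(\frac{M_{-a}(z)}{|M_{-a}(z)|}\right)\right|\le \frac{1-|a|^2}{|1-\bar a z|\,|z-a|}.
\end{equation*}

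Finally, to eliminate the factor $|1-\bar a z|$ from the denominator, I would use the elementary lower bound valid for all $z\in B_1$:
\begin{equation*}
|1-\bar a z|\ge 1-|a||z|\ge 1-|a|=\frac{1-|a|^2}{1+|a|}\ge\frac{1-|a|^2}{2}.
\end{equation*}
Therefore $(1-|a|^2)/|1-\bar a z|\le 2$, and \eqref{eq:37} follows with $C=2$. There is no real obstacle here; the only thing to keep an eye on is that the a priori dangerous factor $|1-\bar a z|^{-1}$ (which can blow up when $|a|\to 1$) is perfectly compensated by the numerator $1-|a|^2$ produced by the derivative of the Blaschke factor.
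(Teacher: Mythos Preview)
Your proof is correct and follows essentially the same route as the paper's: compute $M_{-a}'(z)=(1-|a|^2)/(1-\bar a z)^2$, apply the chain rule with the bound $|\nabla(w/|w|)|\le C/|w|$, and then absorb the factor $(1-|a|^2)/|1-\bar a z|$ by a constant. Your version is in fact slightly more explicit than the paper's, since you justify the last step via $|1-\bar a z|\ge (1-|a|^2)/2$ and obtain the concrete constant $C=2$.
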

 \begin{proof}
   Using $(M_{-a})'(z)=\frac{1-|a|^2}{(1-{\bar a}z)^2}$, we get that
   \begin{equation*}
     \left|\nabla\left(\frac{M_{-a}(z)}{|M_{-a}(z)|}\right)\right|\le \frac{C}{|M_{-a}(z)|}\cdot
     \frac{1-|a|^2}{|1-{\bar a}z|^2}
     =\frac{C(1-|a|^2)}{|z-a||1-{\bar a}z|}\le\frac{C}{|z-a|}\,.
   \end{equation*}
   \qed
 \end{proof}
\begin{proof}[of \rprop{prop:W1p}]
Fix any $p\in(1,2)$.
By standard elliptic estimates, there exists a constant
$A_p=A_p(\Omega)$ such that the solution $w$ of the problem
\begin{equation}
  \label{eq:80}
\begin{cases}
  -\Delta w=\Div {\mathbf g} &\text{ in }\Omega\\
    w=0 &\text{ on }\partial \Omega
\end{cases}
,
\end{equation}
 with ${\mathbf g}\in(L^p(\Omega))^2$ satisfies
 \begin{equation}
   \label{eq:81}
 \|\nabla w\|_{L^p(\Omega)}\leq A_p \|{\mathbf g}\|_{L^p(\Omega)}.
\end{equation}
We now apply the bad discs construction of \rprop{prop:bad-disc}, but this time covering the bad set
\begin{equation*}
  S=S_\varepsilon=\{x\in B_1\,:\,|u(x)|<\tilde\beta\}\,,
\end{equation*}
with   $\tilde\beta\in[\beta,1)$  that satisfies 
\begin{equation}
  \label{eq:82}
 0<1-\tilde\beta<\frac{1}{4A_p}\,.
\end{equation}
In the sequel, $\Omega_\varepsilon$ denotes
 the set given in \eqref{eq:39} for the resulting bad discs from this choice of  $\tilde\beta$. Note that the
 number of discs and the value of $l$ may change as well,   but we shall use the same notation
 as before.

Let $H$ denote the harmonic function in $B_1$ satisfying $H=\eta$ on
$\partial B_1$. By \eqref{eq:107} and the maximum principle,
\begin{equation}
  \label{eq:100}
\|H\|_{L^\infty(B_1)}=\|\eta\|_{L^\infty(\partial
  B_1)}\leq C(g).
\end{equation}
Note that \rlemma{lem:mobius} implies that
\begin{equation}\label{eq:mobius-p}
   \left\|\prod_{j=1}^m \left(\frac{M_{-x_j}(z)}{|M_{-x_j}(z)|}\right)^{\kappa_j}\right\|_{W^{1,p}(B_1)}\leq C.
 \end{equation}
 Therefore 
\begin{equation*}
  \|\eta\|_{W^{1-1/p,p}(\partial B_1)}\leq C, 
\end{equation*}
 and also
 \begin{equation}
   \label{eq:86}
\|H\|_{W^{1,p}(B_1)}\leq C.
 \end{equation}

 Next we define the function $\xi_0$ in $B_{R^{1/2}}$ by
  \begin{equation}
  \label{hm2}
 \xi_0(z)=
 \begin{cases}
   0 & |z|\le R\\
    -\frac{\ln(|z|/R)}{\ln \sqrt{R}} & R<|z|<\sqrt{R}
 \end{cases}
.
  \end{equation}
  Note that
  \begin{equation}
    \label{eq:33}
    \int_{B_{\sqrt{R}}}|\nabla\xi_0|^2
    =\frac{2\pi}{(\ln \sqrt{R})^2}\int_{R}^{\sqrt{R}}\frac{dr}{r}=
    -\frac{2\pi}{\ln \sqrt{R}}\le  C\varepsilon\,.
  \end{equation}
   For $j=1,\ldots,m$ we set in ${\cal D}_j:=D_{\tanh^{-1}\sqrt{R}}(x_j)$:
   $\xi_j(z)=\xi_0(M_{-x_j}(z))$. From \eqref{eq:33} we deduce that
   \begin{equation}
     \label{eq:34}
     \int_{{\cal D}_j}|\nabla\xi_j|^2=\int_{B_{\sqrt{R}}}|\nabla\xi_0|^2\le C\varepsilon\,.
   \end{equation}
   We finally define a function $\xi$ in $B_1$ by
   \begin{equation}
     \label{eq:35}
     \xi(z)=\begin{cases}\xi_j(z) & \text{ if }z\in {\cal D}_j
       \text{ for some }j,\\
                 1 & \text{ on }B_1\setminus\bigcup_{j=1}^m{\cal D}_j.
     \end{cases}
   \end{equation}

   Note that for any $p\in[1,2)$ we have by \eqref{eq:34} and \eqref{eq:208},
   \begin{equation}
     \label{eq:36}
     \begin{aligned}
     \int_{B_1} |\nabla\xi|^p=\sum_{j=1}^m \int_{{\cal D}_j}|\nabla\xi|^p
     &\le \sum_{j=1}^m\left(\int_{{\cal
           D}_j}|\nabla\xi|^2\right)^{p/2}|{\cal D}_j|^{1-p/2}\\
     &\le C\varepsilon^{p/2}R^{1-p/2}\le
    C\varepsilon^{p/2}\exp\big(-\frac{(2-p)B}{2\varepsilon}\big)\,.
    \end{aligned}
   \end{equation}

In $B_1$ we set $\widetilde\eta:=\xi^2\eta$ and
$\widetilde H:=\xi^2H$. From
\eqref{eq:86} and \eqref{eq:36} we conclude that 
\begin{equation}
  \label{eq:1111}
  \|\widetilde H\|_{W^{1,p}(B_1)}\leq C.
\end{equation}

The function $\widetilde \eta$ satisfies
\begin{equation*}
\begin{aligned}
  -\Div(\rho^2\nabla \widetilde\eta)\,=&-\Div(\rho^2\xi^2\nabla\eta)-\Div(\rho^2\eta\nabla(\xi^2))\\
 \,=&\underbrace{-\xi^2\Div(\rho^2\nabla\varphi)}_{F_1}\underbrace{-\rho^2\nabla(\xi^2)\cdot\nabla\varphi}_{F_2}+\Div(\underbrace{\rho^2\xi^2\nabla\Theta}_{G_1})+\Div(\underbrace{-2 \rho^2\eta\xi\nabla\xi}_{G_2})
\\:=&F_1+F_2+\Div G_1+\Div G_2.
\end{aligned}
\end{equation*}
First we note that $F_1=0$ by \eqref{eq:EL}.
Therefore, 
\begin{equation}
  \label{eq:85}
\begin{cases}
-\Delta (\widetilde\eta-\widetilde
H)=F_2+\Div( G_1+G_2) +\Div(\rho^2\nabla \widetilde H)+\Div((\rho^2-1)\nabla (\widetilde\eta-\widetilde
                      H))&\text{in }B_1,\\              
  \widetilde\eta-\widetilde H=0&\text{on }\partial B_1.
\end{cases}
\end{equation}
By elliptic estimates, for any $p\in[1,2)$ there exists $B_p=B_p(\Omega)>0$ such that the solution $w$ of the problem
\begin{equation}
  \label{eq:88}
\begin{cases}
  -\Delta w=v &\text{ in }\Omega,\\
    w=0 &\text{ on }\partial \Omega,
\end{cases}
\end{equation}
 with $v\in L^1(\Omega)$, satisfies
 \begin{equation}
   \label{eq:89}
 \|\nabla w\|_p\leq B_p \|v\|_1.
 \end{equation}
We bound $F_2$ in $L^1$ by
\begin{multline}
  \int_{B_1}|F_2|= \int_{B_1} |\rho^2\nabla(\xi^2)\cdot\nabla\varphi|\le2\sum_{j=1}^m\int_{{\cal D}_j}
  |\nabla\xi||\nabla\varphi|
  \\\le C\Big(\sum_{j=1}^m\big(\int_{{\cal D}_j}|\nabla\xi|^2\big)^{1/2}\Big)
   \big(\int_{B_1}|\nabla u|^2\big)^{1/2}\le
   \big(C\varepsilon \big)^{1/2}\cdot
   \big(\frac{C}{\varepsilon}\big)^{1/2}\le C,
 \end{multline}
 where we used \eqref{eq:upper} and \eqref{eq:34}.

 Clearly \eqref{eq:mobius-p} implies a bound
 \begin{equation}
   \label{eq:38}
   \|G_1\|_{L^p(B_1)}\le C\,.
 \end{equation}
To bound $G_2$ in $L^p$ we use \eqref{eq:36} and \eqref{eq:30} to get
\begin{equation}
\int_{B_1}|G_2|^p\leq C\|\eta\|_\infty^p\|\nabla\xi\|_{L^p(B_1)}^p\le
\left(\frac{C}{\varepsilon^p}\right)\varepsilon^{p/2}
\exp\big(-\frac{(2-p)B}{2\varepsilon}\big)\le\exp(-\frac{c}{\varepsilon})\,,
\end{equation}
for some positive constant $c$.
A bound in $L^p(B_1)$ for $\rho^2\nabla \widetilde H$ follows from \eqref{eq:1111}.

We also note that
\begin{equation*}
  1-\rho^2\leq 2(1-\tilde\beta)~\text{ on }~\supp(\nabla(\widetilde\eta-\widetilde H))\subset\Omega_\varepsilon.
\end{equation*}
 Using the above in \eqref{eq:85} we get by
 \eqref{eq:81} and \eqref{eq:89} that
 \begin{align}
   \label{eq:87}
\begin{split}
 \|\nabla(\widetilde\eta-\widetilde H)\|_{L^p}&\leq A_p\left(\|(\rho^2-1)\nabla
 (\widetilde\eta-\widetilde H)\|_{L^p}+
 \|G_1\|_{L^p}+\|G_2\|_{L^p}+\|\rho^2\nabla \widetilde H\|_{L^p}\right)\\
&\phantom{=}+B_p \|F_2\|_{L^1}
\leq
2A_p(1-\tilde\beta)\|\nabla(\widetilde\eta-\widetilde H)\|_{L^p}+C.
\end{split}
 \end{align}
 
 Combining \eqref{eq:82} and \eqref{eq:87}, we find that $ \|\nabla(\widetilde\eta
 -\widetilde H)\|_{L^p}\leq
 C$, which in conjunction with \eqref{eq:1111} implies that
 $\|\nabla\widetilde\eta \|_{L^p}\leq C$. Since
 $\|\nabla\Theta\|_{L^p(\Omega_\varepsilon)}\leq C$, we obtain that
 \begin{equation}
 \label{hm4}
   \|\nabla u\|_{L^p(B_1\setminus\cup_{j=1}^m  {\cal D}_j)}\leq C.
 \end{equation}
 Finally we note that for each $j=1,\ldots,m$ we have
 \begin{equation}
   \label{eq:40}
   \begin{aligned}
   \int_{{\cal D}_j}|\nabla
   u|^p\le\left(\int_{{\cal D}_j}|\nabla u|^2\right)^{p/2}|{\cal D}_j|^{1-p/2}
   &\le C\varepsilon^{-p/2}R^{1-p/2}\\
   &\le  C\varepsilon^{-p/2}\exp\big(-\frac{(2-p)B}{2\varepsilon}\big)=o_\varepsilon(1).
   \end{aligned}
 \end{equation}
 The conclusion of Proposition \ref{prop:W1p}   follows from
 \eqref{hm4} and \eqref{eq:40}. \qed 
\end{proof}
\subsection{Some identities satisfied by $u_\varepsilon$}
\label{subsec:iden}
In this subsection we list some (essentially known) identities satisfied by the minimizers that will be useful in  the proofs of both \rth{th:main1} and \rth{th:main2}. An important property of the minimizers is that the associated Hopf
 differential is a holomorphic function (see \cite[Lemma~3.1]{hl93}). Note that in dimension two this
property is equivalent to the \enquote{divergence free} property of the
stress-energy tensor, that holds in higher dimensions (of the
domain and the target), 
see e.g., \cite{ahl} and the references therein. In this subsection we
represent a point in $\Omega$ as $z=x_1+ix_2$ and we continue to drop
the subscript $\varepsilon$.
\begin{proposition}
  \label{prop:hopf}
  For any $\varepsilon>0$ the function
  \begin{equation}
    \label{eq:77}
    \chi=\chi_\varepsilon=|u_{x_1}|^2-|u_{x_2}|^2-2i\,u_{x_1}\cdot u_{x_2}+
    \Big(\frac{1}{\varepsilon^2}-1\Big)\Big(|u|_{x_1}^2-|u|_{x_2}^2-2i\,|u|_{x_1}|u|_{x_2}\Big)
  \end{equation}
   is holomorphic in $\Omega$ and the Cauchy-Riemann equations hold in
   the classical sense in a neighborhood of the boundary.
 \end{proposition}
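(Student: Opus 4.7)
My plan is to derive the holomorphicity of $\chi_\varepsilon$ as the 2D incarnation of the divergence-free stress-energy tensor, obtained via inner variations of $u_\varepsilon$. Fix $X \in C_c^\infty(\Omega;\R^2)$, define the family of diffeomorphisms $\psi_t(y) = y + tX(y)$ (a diffeomorphism for $|t|$ small), and consider the admissible competitors $u_t := u_\varepsilon \circ \psi_t^{-1} \in H^1_g(\Omega)$ (admissible since $X$ vanishes near $\partial\Omega$, so $u_t = g$ on $\partial\Omega$). Minimality gives $\frac{d}{dt}\big|_{t=0} E_\varepsilon(u_t) = 0$.

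Next, I would compute this derivative via change of variables $x = \psi_t(y)$, using the expansions $(\nabla\psi_t)^{-1} = I - t\,\nabla X + O(t^2)$ and $\det\nabla\psi_t = 1 + t\,\operatorname{div}X + O(t^2)$. A direct but routine calculation (valid for $u\in H^1$ and $|u| \in H^1$) yields the weak identity
\begin{equation*}
\int_{\Omega} \sum_{i,j=1}^{2} T_{ij}\, \partial_j X^{i}\, dx = 0, \qquad \forall X \in C_c^\infty(\Omega;\R^2),
\end{equation*}
where the stress-energy tensor is
\begin{equation*}
T_{ij} = |\nabla u|^2 \delta_{ij} - 2\, \partial_i u \cdot \partial_j u + \Big(\frac{1}{\varepsilon^2}-1\Big) \Big[|\nabla|u||^2 \delta_{ij} - 2\, \partial_i|u|\, \partial_j|u|\Big].
\end{equation*}
Equivalently, each row of $T$ is divergence-free in $\mathcal{D}'(\Omega)$: $\partial_1 T_{1j} + \partial_2 T_{2j} = 0$ for $j=1,2$.

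The next step is to translate this into holomorphicity. Observe that $T$ is symmetric and $\operatorname{tr} T = 0$ in dimension two, so $T_{22} = -T_{11}$ and $T_{12} = T_{21}$, and a direct inspection of \eqref{eq:77} shows that $\chi_\varepsilon = -T_{11} + i\, T_{12}$. With this identification, the two distributional divergence identities become precisely $\partial_{\bar z} \chi_\varepsilon = 0$ in $\mathcal{D}'(\Omega)$. Since $\chi_\varepsilon \in L^1_{\text{loc}}(\Omega)$ (as $\nabla u,\nabla|u| \in L^2$), Weyl's lemma for the Cauchy-Riemann operator implies that $\chi_\varepsilon$ coincides a.e.\ with a genuinely holomorphic function on $\Omega$.

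For the final assertion that the Cauchy-Riemann equations hold in the \emph{classical} sense near $\partial\Omega$, I would invoke boundary regularity: since $g$ is smooth and $S^1$-valued, the maximum-principle-type argument already used in the paper (together with Hopf's lemma as in the proof of \rprop{prop:B}) shows that $|u_\varepsilon|$ is bounded away from zero in a neighborhood $\mathcal{U}$ of $\partial\Omega$. On $\mathcal{U}$ the Euler-Lagrange system \eqref{eq:EL} for $(\rho_\varepsilon,\varphi_\varepsilon)$ is a smooth elliptic system with smooth boundary data, so the regularity results of \cite{lin89,lin91,hl93} yield $u_\varepsilon \in C^\infty(\overline{\mathcal{U}})$. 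Consequently $\chi_\varepsilon$ is smooth in $\mathcal{U}$, and the distributional Cauchy-Riemann equations become classical pointwise identities. The main technical point is simply the careful inner-variation computation; everything else is a standard translation between tensor identities and complex analysis, plus the boundary regularity, which does not require anything beyond what the paper has already set up.
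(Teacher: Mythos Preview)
Your proposal is correct and follows essentially the same approach as the paper: inner variations by compactly supported diffeomorphisms yield the distributional divergence-free stress-energy tensor (the paper simply cites \cite{ahl} for this), which in two dimensions is equivalent to $\partial_{\bar z}\chi_\varepsilon=0$; near the boundary, H\"older continuity forces $|u_\varepsilon|>0$, whence smoothness via \cite{lin89,lin91,hl93} and the classical Cauchy--Riemann equations follow. Your write-up is in fact more explicit than the paper's sketch---you spell out the tensor $T_{ij}$, the identification $\chi_\varepsilon=-T_{11}+iT_{12}$, and the appeal to Weyl's lemma---but the underlying argument is the same.
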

 We emphasize that in \eqref{eq:77} the dot product refers to scalar
 product of vectors in $\R^2$.
\begin{proof}
    To see that the Cauchy-Riemann equations are satisfied in the
    sense of distributions, we consider the effect of a family of diffeomorphisms
    generated by an arbitrary vector field X on the
    energy $E_\varepsilon$ (see \cite{ahl}) . Since
    $u$ is H\"older continuous on $\overline{\Omega}$, in a small enough
    neighborhood
    of the boundary it satisfies $|u|>0$. Therefore $u$ is
    smooth in that neighborhood. We can then verify by a direct computation that the Cauchy-Riemann
    equations hold for $\chi$ in this neighborhood using
    \eqref{eq:EL}. \qed
  \end{proof}
  From \rprop{prop:hopf} we deduce the following Pohozaev identity. 
  \begin{corollary}
  \label{cor:Pohozaev}
  Every minimizer $u=u_\varepsilon$ satisfies
  \begin{equation}
    \label{eq:Poho}
    \int_{\partial B_1} (|\partial_r u|^2-|\partial_\tau g|^2)+\Big(\frac{1}{\varepsilon^2}-1\Big)|\partial_r\rho|^2=0\,.
  \end{equation}
\end{corollary}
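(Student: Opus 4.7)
The plan is to apply Cauchy's theorem to $\chi(z)\,z$, where $\chi$ is the Hopf differential from \rprop{prop:hopf}, and then extract \eqref{eq:Poho} by rewriting $\chi$ in polar coordinates on $\partial B_1$. Since $\chi$ is holomorphic in $B_1$ and continuous up to the boundary (the Cauchy--Riemann equations hold classically in a neighborhood of $\partial B_1$ by \rprop{prop:hopf}), the product $\chi(z)\,z$ is holomorphic in $B_1$ and continuous on $\overline{B_1}$, so Cauchy's theorem gives
\[
0=\int_{\partial B_1}\chi(z)\,z\,dz=i\int_0^{2\pi}\chi(e^{i\theta})\,e^{2i\theta}\,d\theta,
\]
and in particular the real part of $\chi\,e^{2i\theta}$ integrates to zero on $\partial B_1$.

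To unpack $\chi\,e^{2i\theta}$, I will convert from Cartesian to polar coordinates. The chain-rule identities $u_{x_1}=\cos\theta\,u_r-\tfrac{\sin\theta}{r}u_\theta$ and $u_{x_2}=\sin\theta\,u_r+\tfrac{\cos\theta}{r}u_\theta$ yield by a direct calculation
\[
|u_{x_1}|^2-|u_{x_2}|^2-2i\,u_{x_1}\cdot u_{x_2}=e^{-2i\theta}\Bigl(|u_r|^2-\tfrac{1}{r^2}|u_\theta|^2-\tfrac{2i}{r}\,u_r\cdot u_\theta\Bigr),
\]
and the identical relation holds for the $\rho$-terms (with the dot product replaced by $\rho_r\rho_\theta$). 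Evaluating at $r=1$ and using $u=g$ and $\rho=|g|\equiv 1$ on $\partial B_1$ (so that $u_\theta=\partial_\tau g$ and $\partial_\tau\rho=0$), one obtains
\[
\chi(e^{i\theta})\,e^{2i\theta}=|\partial_r u|^2-|\partial_\tau g|^2-2i\,\partial_r u\cdot\partial_\tau g+\Bigl(\tfrac{1}{\varepsilon^2}-1\Bigr)|\partial_r\rho|^2.
\]
Taking the real part and recalling that $d\tau=d\theta$ on $\partial B_1$ gives exactly \eqref{eq:Poho}.

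The only nontrivial input is the holomorphicity of $\chi$ with enough boundary regularity to justify the contour integral, which is precisely the content of \rprop{prop:hopf}; the interior zeros of $u_\varepsilon$ lie strictly away from $\partial B_1$ and do not affect this boundary computation. Once the Cauchy integral is in hand the rest is a routine polar-coordinate manipulation, so I do not expect additional difficulties.
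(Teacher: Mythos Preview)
Your proof is correct and follows essentially the same route as the paper's: apply Cauchy's theorem to $\chi(z)\,z$ to get $\int_0^{2\pi}\chi(e^{i\theta})e^{2i\theta}\,d\theta=0$, then identify the real part of $\chi\,e^{2i\theta}$ on $\partial B_1$ with the integrand in \eqref{eq:Poho}. The only cosmetic difference is that the paper bundles $u$ and $\rho$ into a single vector $U=\bigl(u,\sqrt{\varepsilon^{-2}-1}\,\rho\bigr)$ so that $\chi=|U_{x_1}|^2-|U_{x_2}|^2-2i\,U_{x_1}\cdot U_{x_2}$ and then computes $\text{Re}(\chi z^2)=|U_\nu|^2-|U_\tau|^2$ directly, whereas you carry out the polar-coordinate conversion for the $u$- and $\rho$-parts separately; the underlying calculation is identical.
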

\begin{proof}
  We denote
  \begin{equation}
    \label{eq:84}
    U=\left(u,\Big(\frac{1}{\varepsilon^2}-1\Big)\rho\right)\,.
  \end{equation}
  Therefore
  \begin{equation}
    \label{eq:78}
   \chi= |U_{x_1}|^2-|U_{x_2}|^2-2i\,U_{x_1}\cdot U_{x_2}.
 \end{equation}
 Since $\chi$ is holomorphic in $B_1$ and continuous on
 $\overline{B}_1$ we have in particular,
 \begin{equation}
   \label{eq:90}
   0=\int_{\partial B_1}\chi z\,dz=i\int_0^{2\pi}\chi(e^{i\theta})e^{2i\theta}\,d\theta.
 \end{equation}
 A direct computation shows that
 \begin{equation}
   \label{eq:92}
   |U_\nu|^2-|U_\tau|^2=|x_1U_{x_1}+x_2U_{x_2}|^2-|-x_2U_{x_1}+x_1U_{x_2}|^2=\text{Re}\left(\chi(z) z^2\right)\quad\text{ on }\partial B_1.
 \end{equation}
 Combining \eqref{eq:90} with \eqref{eq:92} gives that
 \begin{equation*}
   \int_{\partial B_1}|U_\nu|^2-|U_\tau|^2=0\,,
 \end{equation*}
  which is equivalent to \eqref{eq:Poho}. \qed 
\end{proof}

Next we present  a weak formulation of the equation
satisfied by the phase of $u$.
\begin{proposition}
  \label{prop:wedge}
  We have
  \begin{equation}
    \label{eq:72}
    \frac{\partial}{\partial x_1}\Big(u_\varepsilon\wedge
    (u_\varepsilon)_{x_1}\Big)+
    \frac{\partial}{\partial x_2}\Big(u_\varepsilon\wedge (u_\varepsilon)_{x_2}\Big)=0
  \end{equation}
 in the sense of distributions.
\end{proposition}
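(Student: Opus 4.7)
The target identity \eqref{eq:72} is the ``phase equation'' in weak form, and the natural way to obtain it is to test the energy against rotations of the target $\R^2$, which leave the modulus $\rho_\varepsilon=|u_\varepsilon|$ untouched and hence kill the problematic second term in $E_\varepsilon$.

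My plan is to use the variation $u_t := R(t\psi)\, u_\varepsilon$, where $\psi\in C_c^\infty(\Omega)$ and $R(\alpha)$ denotes the rotation of $\R^2$ by angle $\alpha$, with infinitesimal generator $J=\left(\begin{smallmatrix}0 & -1\\1 & 0\end{smallmatrix}\right)$. Since $R(t\psi)$ is an isometry of $\R^2$ pointwise, we have $|u_t(x)|=|u_\varepsilon(x)|$ for every $x$ and $t$, so the second term $(1/\varepsilon^2-1)\int|\nabla|u_t||^2$ is independent of $t$. Moreover $u_t=u_\varepsilon$ on $\partial\Omega$ because $\psi$ has compact support, so $u_t\in H^1_g(\Omega)$ is an admissible competitor. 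Since $u_\varepsilon$ minimises $E_\varepsilon$, the function $t\mapsto\int_\Omega|\nabla u_t|^2$ has a critical point at $t=0$.

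The computation is routine. Differentiating $u_t=R(t\psi)u_\varepsilon$ gives $\partial_t u_t|_{t=0}=\psi\, J u_\varepsilon$, and hence
\begin{equation*}
\left.\tfrac{d}{dt}(u_t)_{x_i}\right|_{t=0}=\psi_{x_i}J u_\varepsilon+\psi\, J (u_\varepsilon)_{x_i}.
\end{equation*}
Using that $(u_\varepsilon)_{x_i}\cdot J(u_\varepsilon)_{x_i}=0$ in $\R^2$ and the elementary identity $a\cdot Jb=-(a\wedge b)=b\wedge a$ for $a,b\in\R^2$, one obtains
\begin{equation*}
0=\left.\tfrac{d}{dt}\int_\Omega|\nabla u_t|^2\right|_{t=0}=2\sum_{i=1}^2\int_\Omega \psi_{x_i}\,\bigl(u_\varepsilon\wedge (u_\varepsilon)_{x_i}\bigr).
\end{equation*}
As this holds for every $\psi\in C_c^\infty(\Omega)$, the distributional identity \eqref{eq:72} follows.

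The only thing to check is that the above variation makes analytic sense, but since $u_\varepsilon\in H^1(\Omega;\R^2)$ and $\psi$ is smooth and compactly supported, $R(t\psi)u_\varepsilon\in H^1_g(\Omega)$ with $t\mapsto \|u_t\|_{H^1}^2$ smooth in $t$, and the wedge products $u_\varepsilon\wedge(u_\varepsilon)_{x_i}$ lie in $L^1(\Omega)$ (in fact $L^2$), so the distributional derivatives appearing in \eqref{eq:72} are well-defined. I anticipate no real obstacle; the only subtlety worth flagging is that one should resist the temptation to write $u_\varepsilon=\rho_\varepsilon e^{i\varphi_\varepsilon}$ and derive \eqref{eq:72} from $\Div(\rho_\varepsilon^2\nabla\varphi_\varepsilon)=0$, because such a representation is only local away from the zero set of $u_\varepsilon$, whereas the rotational-variation argument yields the equation globally on $\Omega$ with no a priori knowledge of the zero set.
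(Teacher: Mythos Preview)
Your proof is correct and is essentially identical to the paper's argument: the paper uses the variation $u_\varepsilon^{(t)}=e^{it\phi}u_\varepsilon$ (complex notation), which is exactly your $R(t\psi)u_\varepsilon$ (rotation notation), and computes the same first variation of $E_\varepsilon$ at $t=0$. Your write-up is slightly more explicit about why the $|\nabla|u||^2$ term drops out, but the idea and the computation coincide.
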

\begin{proof}
  Fix $\phi\in C^\infty_c(\Omega)$ and for $t\in\R$ let $u_\varepsilon=(u_1,u_2)$ and
  $u_\varepsilon^{(t)}:=e^{it\phi}u_\varepsilon$. From the minimality of $u_\varepsilon$ we derive by a
  simple computation that
  \begin{equation}
    \label{eq:75}
    0=\frac{d}{dt}\big|_{t=0}
        E_\varepsilon(u_\varepsilon^{(t)})=2\int_\Omega\sum_{j=1}^2 \Big((u_2)_{x_j}u_1-(u_1)_{x_j}u_2\Big)\phi_{x_j}\,.
  \end{equation}
Since $\phi$ is arbitrary we immediately deduce \eqref{eq:72}. \qed 
\end{proof}
\subsection{An $L^2$-bound for $|\nabla u_\varepsilon|$ away from the singularities}

 We denote by $a_1,\ldots,a_N\in\overline{B_1}$ the different limits of the families
 $\{x_j^{(\ve)}\}$, $j=1,\ldots, m$ (possibly along a subsequence). Since two different families may converge to the same limit, we have $N\le m$. 
 At this point we do not
 exclude the possibility that some of the $a_i$'s belong to $\partial B_1$. Consider any $r>0$ satisfying
 \begin{equation}
 \label{hm6}
 r<\min\{|a_i-a_j|:\, i\neq j\}\text{ and }r<\dist (a_j,\p B_1),\ \fo
 j\text{ such that }a_j\in B_1.
 \end{equation}
  
  We denote
 \begin{equation*}
\widetilde\Omega_r:=B_1\setminus \bigcup_{j=1}^N \overline {B_r(a_j)},
\end{equation*}
and by $d_j$ the  degree of $u_\ve$ on $\partial (B_s(a_j)\cap B_1)$
for a small $\ve$ and (a small but fixed) $s$. The following equality
is clear: if $J_j:=\{ \ell:\, x_\ell^{(\ve)}\to a_j\}$, then
$d_j=\sum_{\ell\in J_j}\kappa_\ell$. 
\begin{theorem}
\label{th:bound}
 For each $r$ as in \eqref{hm6} we have
 \begin{equation}
\label{eq:enrgy-bd}
\int_{\widetilde\Omega_r}|\nabla u_\varepsilon|^2\leq C(r).
\end{equation}
\end{theorem}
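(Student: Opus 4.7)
The plan is to prove the matching lower bound
\begin{equation*}
\sum_{j=1}^N E_\varepsilon(u_\varepsilon; B_r(a_j)\cap B_1)\geq \frac{2\pi D}{\varepsilon}-C(r),
\end{equation*}
which, when combined with the upper bound $E_\varepsilon(u_\varepsilon)\leq 2\pi D/\varepsilon+C$ from \rcor{cor:energy} and the identity $\sum_j d_j=D$, immediately gives
\begin{equation*}
\int_{\widetilde\Omega_r}|\nabla u_\varepsilon|^2\leq E_\varepsilon(u_\varepsilon;\widetilde\Omega_r)\leq C(r).
\end{equation*}
For $\varepsilon$ small enough (depending on $r$), every bad disc $D_{\tanh^{-1}R^{(\varepsilon)}}(x_k^{(\varepsilon)})$ from \rprop{prop:bad-disc} is contained in some $B_{r/2}(a_j)$, since the $x_k^{(\varepsilon)}$ cluster at the $a_j$'s and the bad discs have Euclidean diameter $O(e^{-B/\varepsilon})$. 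In particular $\rho_\varepsilon\geq\beta$ on $\widetilde\Omega_{r/2}$, and the winding of $u_\varepsilon/|u_\varepsilon|$ around each $a_j$ on every circle $\partial B_s(a_j)\cap B_1$ with $s\in[r/2,r]$ equals $d_j$.

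The key tool is the Pohozaev-type identity coming from the holomorphicity of the Hopf differential $\chi_\varepsilon$ (\rprop{prop:hopf}). Assuming first that $a_j\in B_1$ with $B_r(a_j)\subset B_1$, Cauchy's theorem applied to $\chi_\varepsilon(z)(z-a_j)\,dz$ on $\partial B_s(a_j)$, combined with the identity $|\partial_r U_\varepsilon|^2-|\partial_\tau U_\varepsilon|^2=\operatorname{Re}(\chi_\varepsilon(z)\,e^{2i\theta})$ on $\partial B_s(a_j)$ (with $U_\varepsilon=(u_\varepsilon,\sqrt{1/\varepsilon^2-1}\,\rho_\varepsilon)$), yields
\begin{equation*}
\int_{\partial B_s(a_j)}|\partial_r U_\varepsilon|^2\,d\tau=\int_{\partial B_s(a_j)}|\partial_\tau U_\varepsilon|^2\,d\tau \qquad \text{for every }s\in(0,r).
\end{equation*}
Integrating in $s$ gives $E_\varepsilon(u_\varepsilon;B_r(a_j))=2\int_{B_r(a_j)}|\partial_\tau U_\varepsilon|^2\,dx$. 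The tangential energy on each circle $\partial B_s(a_j)$ outside the bad-disc scale is bounded below by writing $u_\varepsilon=\rho_\varepsilon e^{i\varphi_\varepsilon}$ locally and using $|\partial_\tau U_\varepsilon|^2\geq 2\varepsilon^{-1}\rho_\varepsilon|\partial_\tau\rho_\varepsilon||\partial_\tau\varphi_\varepsilon|$ together with the coarea-plus-degree argument of \rprop{prop:lb} applied inside $B_r(a_j)$. For boundary singularities $a_j\in\partial B_1$, the Pohozaev identity on $B_s(a_j)\cap B_1$ carries an extra boundary term supported on $\partial B_1\cap\overline{B_s(a_j)}$, which is harmless because $\rho_\varepsilon=1$ and $g$ is smooth there.

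The main obstacle is achieving the sharp constant in the lower bound. The coarea lower bound naively yields only $E_\varepsilon(u_\varepsilon;B_r(a_j)\cap B_1)\geq 2\pi d_j M_j^2/\varepsilon$ with $M_j=\min_{\partial B_r(a_j)\cap B_1}\rho_\varepsilon\geq\beta$, losing a factor $\beta^2<1$. To close this gap and obtain $\geq 2\pi d_j/\varepsilon-C_j(r)$, one relies on the $L^2$ control $\int_\Omega|\nabla\rho_\varepsilon|^2\leq C\varepsilon$ inherited from \rcor{cor:energy}, which forces $\rho_\varepsilon\to 1$ in an averaged sense on most circles $\partial B_s(a_j)$ and thereby tightens the Cauchy-Schwarz step used in bounding the tangential energy. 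The contribution from the small bad-disc region $s\in(0,\sigma_\varepsilon)$ with $\sigma_\varepsilon\sim R^{(\varepsilon)}$, where this step fails, is handled directly by the coarea lower bound of \rprop{prop:lb} applied on $B_{\sigma_\varepsilon}(a_j)$.
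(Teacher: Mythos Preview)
Your approach has a genuine gap that prevents it from closing. The strategy of proving a matching lower bound $\sum_j E_\varepsilon(u_\varepsilon;B_r(a_j)\cap B_1)\ge \frac{2\pi D}{\varepsilon}-C(r)$ requires, via the coarea argument, that $m_j:=\min_{\partial B_r(a_j)}\rho_\varepsilon$ satisfy $1-m_j=O(\varepsilon)$, so that $\frac{2\pi d_j m_j^2}{\varepsilon}=\frac{2\pi d_j}{\varepsilon}-O(1)$. But at this stage of the argument the only available control is $\int_\Omega|\nabla\rho_\varepsilon|^2\le C\varepsilon$, and the radial-average argument (as in \eqref{eq:721}--\eqref{eq:rho-r'}) then yields only $1-\rho_\varepsilon=O(\sqrt{\varepsilon})$ on good circles. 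Plugging this in gives $E_\varepsilon(u_\varepsilon;\widetilde\Omega_r)\le C/\sqrt{\varepsilon}$, not $C(r)$. The sharp bound $1-\rho_\varepsilon=O(\varepsilon)$ is established in the paper only in \rprop{prop:rhoto1}, whose proof (via \rlemma{lem:bound-grad} and \rprop{prop:away}) relies on the very convergence statements that use \rth{th:bound}; invoking it here would be circular. Your attempt to salvage the constant via the Pohozaev/tangential-energy identity does not help either: the inequality $|\partial_\tau U_\varepsilon|^2\ge 2\varepsilon^{-1}\rho_\varepsilon|\partial_\tau\rho_\varepsilon||\partial_\tau\varphi_\varepsilon|$ involves tangential derivatives along circles centred at $a_j$, whereas the coarea formula is built on level sets of $\rho_\varepsilon$, which are unrelated to those circles; no degree information is encoded in $\partial_\tau\rho_\varepsilon$.

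The paper's proof proceeds by an entirely different route. It writes $u_\varepsilon=\rho_\varepsilon e^{i(\Theta+\eta)}$ on $\Omega_\varepsilon$ and reduces the problem to bounding $\int_{\widetilde\Omega_{\tilde r}}|\nabla\eta|^2$. Using the $W^{1,p}$ bound (\rprop{prop:W1p}) it selects a good radius $\tilde r\in(r/2,r)$ and good rays on which $\eta$ has controlled oscillation, then applies the maximum principle (as in \rlemma{lem:MP}) to get $\|\eta\|_{L^\infty(\widetilde\Omega_{\tilde r})}\le C(r)$. Finally it multiplies the equation $-\Div(\rho^2\nabla\eta)=\Div(\rho^2\nabla\Theta)$ by $\eta$ and integrates by parts; the boundary term on $\partial B_1$ is controlled by the Pohozaev identity \eqref{eq:Poho}, and Cauchy--Schwarz closes the estimate. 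No sharp lower bound on the energy in the bad balls is needed.
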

\begin{proof}
Note that, dropping the subscript $\varepsilon$, 
\begin{equation}
  \label{eq:47}
  |\nabla u|^2=|\nabla\rho|^2+\rho^2|\nabla\varphi|^2=|\nabla\rho|^2+\rho^2|\nabla(\Theta+\eta)|^2\,.
\end{equation}
 Since $\int_{B_1}|\nabla\rho|^2\le C\varepsilon$ by
 \eqref{eq:upper}, and $\int_{\widetilde\Omega_r} |\nabla\Theta|^2\le
 C(r)$ thanks to \rlemma{lem:mobius} and \eqref{eq:bound-kappa}, we only need to find a bound for $\int_{\widetilde\Omega_r}|\nabla \varphi|^2$.
By the boundedness of  $\{\nabla\eta\}$ in $L^1(\Omega_\ve)$ (see \rprop{prop:W1p}), it follows that there exists $\widetilde r=\widetilde r(\ve)\in (r/2,r)$ such that
 \begin{equation}
\label{eq:s}
\sum_{j=1}^N \int_{\partial B_{\widetilde r}(a_j)\cap\Omega} |\nabla\eta|\,d\sigma\leq C_1(r).
\end{equation}
Similarly,  we can  find for each
\begin{equation*}
  j\in I:=\{k\in 1,\ldots,N~\text{ such that }a_k\in B_1\},
  \end{equation*}
a number $\beta_j\in[0,2\pi)$ such that the set
\begin{equation*}
\widetilde L_j=\widetilde L_j(\beta_j):=\{a_j+s\, e^{i\beta_j}:\, s\ge \widetilde r\}\cap \widetilde \Omega_{\widetilde r}
\end{equation*}
satisfies
\begin{equation}
\label{eq:tilde}
\int_{\widetilde L_j} \left|\frac{\partial\eta}{\partial s}\right|\,ds\leq C_2(r).
\end{equation}
By the argument of the proof of Lemma \ref{lem:MP} and using  \eqref{eq:s} and \eqref{eq:tilde}, we find that 
\begin{equation}
\label{eq:eta-tilde}
\|\eta\|_{L^\infty(\widetilde \Omega_{\widetilde r})}\leq C_3(r).
\end{equation}
For $\varepsilon$ sufficiently small we have
\begin{equation}
  \label{eq:115}
  |x_\ell^{(\ve)}- a_j|<\widetilde r/2,~\forall\ell\in J_j,\,j=1,\ldots,N.
\end{equation}
Next, we multiply  the equation
\begin{equation*}
  -\Div(\rho^2\nabla\eta)=\Div(\rho^2\nabla\Theta)
\end{equation*}
by $\eta$, and integrate over
$\widetilde\Omega_{\widetilde r}$. 
This yields 
 \begin{equation}
 \label{eq:ontilde}
\int_{\widetilde\Omega_{\widetilde r}}
\rho{^2}|\nabla\eta|^2=-\int_{\widetilde\Omega_{\widetilde r}}\rho{^2}\nabla\Theta\cdot\nabla\eta
+\int_{\p\widetilde\Omega_{\widetilde r}}\rho^2\frac{\partial\va}{\partial n}\eta
:=I_1+I_2.
\end{equation}


We first claim that
\begin{equation}
  \label{eq:213}
  |I_2|\leq C_4(r)\,.
\end{equation}
Indeed, we use \eqref{eq:Poho} and
\eqref{eq:eta-tilde} for the integral on $\partial
\widetilde\Omega_{\widetilde r}\cap\partial B_1$ and for the
integral on $\partial B_{\widetilde r}(a_j)\cap B_1$ we use \eqref{eq:s} and
the fact that thanks to \eqref{eq:115} we have
$$\left|\frac{\partial\Theta}{\partial n}\right|\leq
\frac{C}{\widetilde r}~\text{ on }\partial B_{\widetilde r}(a_j).
$$
Applying the Cauchy-Schwarz inequality to $I_1$ in conjunction with \eqref{eq:213}
in \eqref{eq:ontilde} yields
\begin{equation}
\label{eq:116}
  \int_{\widetilde\Omega_{\widetilde r}}
\rho^2|\nabla\eta|^2\leq C_4(r)+\int_{\widetilde\Omega_{\widetilde r}}\frac{\rho^2}{2}|\nabla\eta|^2+\int_{\widetilde\Omega_{\widetilde r}}\frac{\rho^2}{2}|\nabla\Theta|^2\,.
\end{equation}

Since $\int_{\widetilde\Omega_{\widetilde r}}(\rho^2/2)\,
|\nabla\Theta|^2\leq C_5(r)(|\log r|+1)$, we deduce from \eqref{eq:116}
that $\int_{\widetilde\Omega_{\widetilde r}}|\nabla\eta|^2\leq
C_6(r)$. It follows that also 
$\int_{\widetilde \Omega_{\widetilde r}}|\nabla\va|^2\leq C_7(r)$,
which in view of \eqref{eq:47} clearly implies
\eqref{eq:enrgy-bd}. \qed 
\end{proof}
\subsection{Convergence of $u_{\varepsilon_n}$}
 Next, we will prove convergence of $u_{\varepsilon_n}$ on
 $\overline{B}_1\setminus\{a_1,\ldots,a_N\}$.
 \begin{proposition}
   \label{prop:conv-int}
   Let $b\in B_1$ and $r_1>0$ be such that $B_{r_1}(b)\subset
   B_1\setminus\{a_1,\ldots,a_N\}$. Then $u_{\varepsilon_n}\to u_0$ in
   $C^k~(B_{r_1/2}(b))$ for all $k\ge0$, where $u_0$ is a smooth
   $S^1$--valued harmonic map. 
  
 \end{proposition}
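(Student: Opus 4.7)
The plan is to pass to the limit in the Euler-Lagrange system \eqref{eq:EL} written in the $\rho,\varphi$ variables, which is legitimate in $B_{r_1}(b)$ because the ball is separated from the singular set. For $\varepsilon$ small enough, all bad discs $D_{\tanh^{-1}R}(x_j^{(\varepsilon)})$ are contained in arbitrarily small neighborhoods of $a_1,\ldots,a_N$ and hence do not intersect $B_{r_1}(b)$; so $\rho_\varepsilon\ge\beta$ on $B_{r_1}(b)$ and, since the ball is simply connected, we may write $u_\varepsilon=\rho_\varepsilon e^{i\varphi_\varepsilon}$ with a globally defined smooth phase there. The upper bound \eqref{eq:upper} forces $\int_{B_1}|\nabla\rho_\varepsilon|^2\le C\varepsilon$; combined with $\rho_\varepsilon-1\in H^1_0(B_1)$ and Poincaré's inequality this gives $\rho_\varepsilon\to 1$ in $L^p(B_{r_1}(b))$ for every $p<\infty$, while \rth{th:bound} yields $\int_{B_{r_1}(b)}|\nabla\varphi_\varepsilon|^2\le C$. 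After subtracting its average, the family $\{\varphi_\varepsilon\}$ is bounded in $H^1(B_{r_1}(b))$.

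Along a subsequence, $\varphi_{\varepsilon_n}\rightharpoonup\varphi_*$ weakly in $H^1$ and strongly in $L^p$, so $u_{\varepsilon_n}\to u_0:=e^{i\varphi_*}$ in $L^p$ with $|u_0|=1$ a.e. To identify the limit equation, I would use \rprop{prop:wedge}, which reads $\Div(u_\varepsilon\wedge\nabla u_\varepsilon)=0$: since $u_{\varepsilon_n}\to u_0$ strongly in $L^p$ while $\nabla u_{\varepsilon_n}\rightharpoonup\nabla u_0$ weakly in $L^2$, the product $u_{\varepsilon_n}\wedge\nabla u_{\varepsilon_n}$ converges in $\mathcal{D}'$ to $u_0\wedge\nabla u_0=\nabla\varphi_*$. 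Therefore $\Delta\varphi_*=0$, and $u_0$ is a smooth $S^1$-valued harmonic map on $B_{r_1}(b)$.

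The hard part is upgrading this weak convergence to $C^k$ convergence, which requires a uniform-in-$\varepsilon$ elliptic bootstrap applied to \eqref{eq:EL}. Since $\rho_\varepsilon^2\in[\beta^2,1]$, the divergence-form equation $\Div(\rho_\varepsilon^2\nabla\varphi_\varepsilon)=0$ is uniformly elliptic, so Meyers' higher integrability theorem yields $\nabla\varphi_\varepsilon\in L^p_{\mathrm{loc}}$ for some $p>2$ with bounds independent of $\varepsilon$. Feeding this into $\Delta\rho_\varepsilon=\varepsilon^2\rho_\varepsilon|\nabla\varphi_\varepsilon|^2$ gives $\|\Delta\rho_\varepsilon\|_{L^{p/2}_{\mathrm{loc}}}\le C\varepsilon^2$, so interior $W^{2,p/2}$-estimates, together with the $L^p$-convergence of $\rho_\varepsilon$ to $1$, yield $\rho_\varepsilon\to 1$ in $C^{\alpha}_{\mathrm{loc}}$ and, after a further iteration, in $C^{1,\alpha}_{\mathrm{loc}}$. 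Once $\nabla\rho_\varepsilon$ is small in $L^\infty_{\mathrm{loc}}$, the equation $\Delta\varphi_\varepsilon=-2\nabla\rho_\varepsilon\cdot\nabla\varphi_\varepsilon/\rho_\varepsilon$ becomes a small perturbation of Laplace's equation, which improves the regularity of $\varphi_\varepsilon$. Alternating these two steps in the spirit of the bootstrap used in the proof of \rth{th:deg-zero} and shrinking the ball slightly at each stage, we obtain uniform $C^k$ bounds on $u_\varepsilon$ on $B_{r_1/2}(b)$; Arzelà--Ascoli together with the uniqueness of the weak limit $u_0$ then gives $u_{\varepsilon_n}\to u_0$ in $C^k(B_{r_1/2}(b))$ for every $k\ge 0$.
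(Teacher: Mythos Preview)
Your argument is correct and takes a genuinely different route from the paper's. The paper does not invoke Meyers' theorem; instead it first uses Fubini to select a good circle $\partial B_{\tilde r}(b)$ on which $\int|\nabla\varphi_\varepsilon|^2+\varepsilon^{-1}\int|\nabla\rho_\varepsilon|^2\le C$, defines harmonic extensions $\widetilde\varphi_\varepsilon,\widetilde\rho_\varepsilon$ from this circle, and compares $E_\varepsilon(u_\varepsilon)$ against $E_\varepsilon(\widetilde\rho_\varepsilon e^{i\widetilde\varphi_\varepsilon})$ to extract strong $H^1$-convergence directly. To launch the bootstrap, the paper writes $\varphi_\varepsilon=\widetilde\varphi_\varepsilon+\psi_\varepsilon$ with $\psi_\varepsilon=0$ on $\partial B_{\tilde r}(b)$ and uses $\Delta\psi_\varepsilon=\Div((1-\rho_\varepsilon^2)\nabla\varphi_\varepsilon)$; the key device is that the bad-disc threshold $\tilde\beta$ may be taken close enough to $1$ (namely $1-\tilde\beta<1/(4A_4)$) so that $\psi\mapsto(-\Delta)^{-1}\Div((1-\rho_\varepsilon^2)\nabla\psi)$ becomes a contraction in $W^{1,4}$, yielding $\|\nabla\varphi_\varepsilon\|_{L^4}\le C$ in one step. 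From there the argument proceeds as in the proof of \rth{th:deg-zero}.

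Your appeal to Meyers' higher integrability is more economical: it dispenses with the good-circle selection and the need to fine-tune $\tilde\beta$, and delivers $\nabla\varphi_\varepsilon\in L^p_{\mathrm{loc}}$ for some $p>2$ straight from the uniform ellipticity $\rho_\varepsilon^2\in[\beta^2,1]$. The price is that $p$ may be only slightly above $2$, so the alternating bootstrap between $\Delta\rho_\varepsilon=\varepsilon^2\rho_\varepsilon|\nabla\varphi_\varepsilon|^2$ and $\Delta\varphi_\varepsilon=-2\rho_\varepsilon^{-1}\nabla\rho_\varepsilon\cdot\nabla\varphi_\varepsilon$ needs several rounds before reaching $L^q$ for all $q$; your phrase ``after a further iteration'' is a bit optimistic, but the iteration does gain strictly at each step and terminates in finitely many passes. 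Both approaches ultimately feed into the same bootstrap mechanism as in \rth{th:deg-zero}; yours reaches it by a softer and more standard route, while the paper's is more self-contained.
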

 \begin{proof}
    Since $|u_\varepsilon|\ge\tilde\beta$ in $B_{r_1}(b)$ for small $\varepsilon$, we may write
    $u_\varepsilon=\rho_\varepsilon e^{i\varphi_\varepsilon}$.
   By \rth{th:bound}, $\int_{ B_r(b)}|\nabla u_\varepsilon|^2\le
   C$. Also, $\int_{ B_r(b)}|\nabla \rho_\varepsilon|^2\le
   C\varepsilon$ by \eqref{eq:upper}. Hence by Fubini we can find $\tilde r\in((3/4)r_1,r_1)$ such that
   \begin{equation}
     \label{eq:43}
     \int_{\partial B_{\tilde r}(b)}
     |\nabla\varphi_\varepsilon|^2+\varepsilon^{-1}|\nabla\rho_\varepsilon|^2\le C.
   \end{equation}
   Since $\{\varphi_\varepsilon\big|_{\partial B_{\tilde r}(b)}\}$ is
   bounded in $H^1(\partial B_{\tilde r}(b))$, by passing to a
   subsequence we may assume that
   \begin{equation}
     \label{eq:57}
     \varphi_\varepsilon\big|_{\partial B_{\tilde
         r}(b)}\to\varphi_0~\text{ in }H^{1/2}(\partial B_{\tilde
     r}(b)) \text{ and uniformly on }\partial B_{\tilde r}(b).
   \end{equation}
   As for $\rho_\varepsilon$, from \eqref{eq:43} we infer that 
   \begin{equation}
     \label{eq:177}
    \rho_\varepsilon\big|_{\partial B_{\tilde r}(b)}\to c_0~\text{ in }H^{1}(\partial B_{\tilde
     r}(b)) \text{ and uniformly on }\partial B_{\tilde r}(b),
   \end{equation}
 for some constant $c_0\ge0$.
   We denote by $\widetilde\varphi_0$ the harmonic extension of
   $\varphi_0$ to  $B_{\tilde r}(b)$, and set
   $u_0=e^{i\widetilde\varphi_0}$. We are going to prove that
   $u_\varepsilon\to u_0$ on $B_{\tilde r}(b)$ in different norms,
   starting with the $H^1$-norm.

   \par We denote as usual by $\widetilde\varphi_\varepsilon$ and
   $\widetilde\rho_\varepsilon$, respectively, the harmonic extensions
   of $\rho_\varepsilon$ and $\varphi_\varepsilon$. First, by
   \eqref{eq:57} we have
   \begin{equation}
     \label{eq:178}
     \lim_{\varepsilon\to0} \int_{B_{\tilde  r}(b)} |\nabla\widetilde\varphi_\varepsilon|^2=\int_{B_{\tilde  r}(b)} |\nabla\widetilde\varphi_0|^2.
   \end{equation}
   Next we claim that
   \begin{equation}
     \label{eq:46}
    1-C\varepsilon^{1/2}\le \rho_\varepsilon\le 1\quad\text{ on }\partial B_{\tilde r}(b).
   \end{equation}
  Indeed, assuming first that $b=0$, we have as in
  \eqref{eq:721}--\eqref{eq:rho-r'} that
  \begin{equation}
    \label{eq:45}
    1-\frac{1}{2\pi \tilde r}\int_{\partial B_{\tilde
        r}}\rho_\varepsilon\le C\varepsilon^{1/2}.
  \end{equation}
   Note the difference with respect to the situation in
   Subsection~\ref{sec:conv-minim}:   here we have at our
   disposal only the weaker upper bound
    $\int_{B_{\tilde r}}|\nabla\rho_\varepsilon|^2\le C\varepsilon$.
Since \eqref{eq:43} implies that
\begin{equation}
  \label{eq:152}
  |\rho_\varepsilon(x)-\rho_\varepsilon(y)|\le
C\varepsilon^{1/2},\quad\forall x,y\in\partial B_{\tilde r}\,,
\end{equation}
  we deduce \eqref{eq:46} from \eqref{eq:45}--\eqref{eq:152} in the case $b=0$. The general case follows
  again by applying a M\"obius transformation.
  \par An immediate consequence
  of \eqref{eq:46} is that $c_0=1$. Therefore, the bound $\int_{ B_r(b))}|\nabla \rho_\varepsilon|^2\le
   C\varepsilon$ implies that
  \begin{equation}
    \label{eq:179}
   \rho_\varepsilon\to 1 \text{ in }H^1(B_{\tilde r}(b)).
  \end{equation}
  
  \par  Next we use the harmonic extensions of $\rho_\varepsilon$ and
  $\varphi_\varepsilon$ to construct the  comparison map $v_\varepsilon={\widetilde\rho_\varepsilon}e^{i\widetilde\varphi_\varepsilon}$ on $B_{\tilde
      r}(b)$.
    Clearly,
      \begin{equation}
        \label{eq:54}
        E_\varepsilon(u_\varepsilon; B_{\tilde r}(b))\le
        E_\varepsilon(v_\varepsilon; B_{\tilde r}(b))\le
        \int_{B_{\tilde r}(b)}
        |\nabla\widetilde\varphi_\varepsilon|^2+\frac{1}{\varepsilon^2}|\nabla\widetilde\rho_\varepsilon|^2\,.
      \end{equation}
 Since $\int_{B_{\tilde r}(b)}|\nabla\widetilde\rho_\varepsilon|^2\le
 \int_{B_{\tilde r}(b)}|\nabla\rho_\varepsilon|^2$, we deduce from
 \eqref{eq:54} that
 \begin{equation}
   \label{eq:55}
   E_\varepsilon(u_\varepsilon; B_{\tilde r}(b))=\int_{B_{\tilde r}(b)}\rho^2
   |\nabla\varphi_\varepsilon|^2+\frac{1}{\varepsilon^2}|\nabla\rho_\varepsilon|^2
   \le
   \int_{B_{\tilde r}(b)}
        |\nabla\widetilde\varphi_\varepsilon|^2+\frac{1}{\varepsilon^2}|\nabla\widetilde\rho_\varepsilon|^2\le \int_{B_{\tilde r}(b)}
        |\nabla\widetilde\varphi_\varepsilon|^2+\frac{1}{\varepsilon^2}|\nabla\rho_\varepsilon|^2\,.
      \end{equation}
       Therefore, $ \int_{B_{\tilde
           r}(b)}\rho_\varepsilon^2|\nabla\varphi_\varepsilon|^2\le
       \int_{B_{\tilde r}(b)}|\nabla\widetilde\varphi_\varepsilon|^2$,
       and we obtain that 
      \begin{equation}
        \label{eq:56}
        \int_{B_{\tilde r}(b)}|\nabla u_\varepsilon|^2=
        \int_{B_{\tilde r}(b)}\rho_\varepsilon^2|\nabla\varphi_\varepsilon|^2+|\nabla\rho_\varepsilon|^2\le \int_{B_{\tilde r}(b)}
        |\nabla\widetilde\varphi_\varepsilon|^2+C\varepsilon.
      \end{equation}
      Next, consider a subsequence such that
      $u_{\varepsilon_n}\rightharpoonup u$ weakly in $H^1(B_{\tilde
        r}(b))$. By \eqref{eq:179}, 
      $u=e^{i\varphi_0}=u_0$ on $\partial B_{\tilde r}(b)$, whence
      \begin{equation}
        \label{eq:180}
         \int_{B_{\tilde r}(b)}|\nabla u_0|^2\le  \int_{B_{\tilde r}(b)}|\nabla u|^2.
      \end{equation}

      Finally, by \eqref{eq:56}  and \eqref{eq:178} 
 we have
      \begin{equation}
        \label{eq:58}
        \int_{B_{\tilde r}(b)}|\nabla u|^2\le \limsup \int_{B_{\tilde
            r}(b)}|\nabla u_{\varepsilon_n}|^2\le \limsup \int_{B_{\tilde
            r}(b)}|\nabla \widetilde\varphi_{\varepsilon_n}|^2=\int_{B_{\tilde
            r}(b)}|\nabla u_{0}|^2\,.
      \end{equation}
       Combining \eqref{eq:180} with \eqref{eq:58} we get that $u=u_0$
       and then deduce the strong convergence (up to passing to a
       subsequence), 
       $u_\varepsilon\to u_0$ in $H^1(B_{\tilde r}(b))$.

  Next we write in $B_{\tilde r}$, 
  $\varphi_\varepsilon=\widetilde\varphi_\varepsilon+\psi_\varepsilon$,
  analogously to the notation we used in the proof of
  \rth{th:deg-zero} (i.e.,  $\psi_\varepsilon=0$ on $\partial
  B_{\tilde r}$). Note that
  $\rho_\varepsilon,\varphi_\varepsilon$ and $\psi_\varepsilon$
  satisfy the equations \eqref{eq:EL}--\eqref{eq:psi}. Since
  $\varphi_\varepsilon\big|_{\partial B_{\tilde  r}(b)}$ is bounded in
  $H^1(\partial B_{\tilde  r}(b))$, it follows that
  \begin{equation}
    \label{eq:48}
    \|\widetilde\varphi_\varepsilon\|_{H^{3/2}(B_{\tilde r}(b))}\le C\,.
  \end{equation}
  Then from Sobolev embeddings it follows that
  \begin{equation}
    \label{eq:49}
   \|\widetilde\varphi_\varepsilon\|_{W^{1,4}(B_{\tilde r}(b))}\le C.
 \end{equation}
  From the invariance of the equation
  \begin{equation}
    \label{eq:181}
    \Delta\psi_\varepsilon=\Div((1-\rho_\varepsilon^2)\nabla\varphi_\varepsilon)
  \end{equation}
 with respect to scalings it follows that the constant $A_4$ in the inequality
 \begin{equation}
   \label{eq:60}
   \|\nabla\psi_\varepsilon\|_{L^4(B_{\tilde r}(b))}\le A_4\|(1-\rho_\varepsilon^2)\nabla\varphi_\varepsilon\|_{L^4(B_{\tilde r}(b))}
 \end{equation}
can be chosen independently of the radius $\tilde r$. We may assume 
that $\tilde\beta$ that was used to construct the bad discs satisfies
in addition 
\begin{equation}
  \label{eq:59}
  1-\tilde\beta<\frac{1}{4A_4}\,.
\end{equation}
By \eqref{eq:49}--\eqref{eq:59} we get that
\begin{equation*}
  \|\nabla\psi_\varepsilon\|_{L^4(B_{\tilde r}(b))}\le 2(1-\tilde\beta) A_4\big(C+\|\psi_\varepsilon\|_{L^4(B_{\tilde r}(b))}\big),
\end{equation*}
  implying that 
  \begin{equation}
    \label{eq:61}
     \|\nabla\psi_\varepsilon\|_{L^4(B_{\tilde r}(b))}\le C~\text{ and }~\|\nabla\varphi_\varepsilon\|_{L^4(B_{\tilde r}(b))}\le C.
   \end{equation}
 Next we deduce from the equation satisfied by $\rho_\varepsilon$ in
 \eqref{eq:EL} and elliptic estimates that
 \begin{equation}
   \label{eq:62}
   \|\nabla(\rho_\varepsilon-\widetilde\rho_\varepsilon)\|_{L^p(B_{\tilde r}(b))}\le C_p
  \|\Delta\rho_\varepsilon\|_{L^2(B_{\tilde r}(b))}\le
  C_p\varepsilon^2\|\nabla\varphi_\varepsilon\|_{L^4(B_{\tilde r}(b))}^2\le 
   C\varepsilon^2,~\forall p<\infty.
 \end{equation}
 In particular, we deduce from\eqref{eq:62} that
 $\|\rho_\varepsilon-\widetilde\rho_\varepsilon\|_{L^\infty(B_{\tilde r}(b))}\le C\varepsilon^2$.
 Since  $\|1-\widetilde\rho_\varepsilon\|_{L^\infty(B_{\tilde r}(b))}\le C\varepsilon^{1/2}$ by
 \eqref{eq:46} and the maximum principle,  it follows that
 \begin{equation}
     \label{eq:63}
     \|\rho_\varepsilon-1\|_{L^\infty(B_{\tilde r}(b))}\le C\varepsilon^{1/2}.
   \end{equation}
   \par  We clearly have:
   \begin{equation}
     \label{eq:182}
      \widetilde\rho_\varepsilon \text{ and }
 \widetilde\varphi_\varepsilon\text{ are bounded in }
 W^{j,p}_{\text{loc}}(B_{\tilde r}(b)),~\forall j, \forall p.
   \end{equation}
  Using \eqref{eq:63} in \eqref{eq:181}, taking into account
  \eqref{eq:182},  we can deduce, as in
 the proof of \rth{th:deg-zero} that $\{\nabla\varphi_\varepsilon\}_{\varepsilon>0}$
 are uniformly bounded in $L^p_{\text{loc}}(B_{\tilde r}(b))$, for all $p>1$.
We can now
 conclude the proof of the $C^k$-convergence by induction as in the
 proof of \rth{th:deg-zero}. \qed 
\end{proof}
 We will also  need a version of \rprop{prop:conv-int} in a
 neighborhood of the boundary.
\begin{proposition}
   \label{prop:conv-bd}
   Let $b\in \partial B_1$ and $r_1>0$ be such that $B_{r_1}(b)\subset
   \overline{B}_1\setminus\{a_1,\ldots,a_N\}$. Then, $u_{\varepsilon_n}\to u_0$ in
   $C^k(B_{r_1/2}(b)\cap B_1)$ for all $k\ge0$, where $u_0$ is a smooth
   $S^1$--valued harmonic map satisfying $u_0=g$ on
   $B_{r_1}(b)\cap\partial B_1$.
 \end{proposition}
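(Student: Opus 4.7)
The plan is to follow the strategy of \rprop{prop:conv-int} almost verbatim, working instead on a half-disc-like sub-domain $\Omega^b_{\tilde r}:=B_{\tilde r}(b)\cap B_1$, and relying on the smoothness of the boundary data $g$ to run boundary elliptic regularity in place of interior regularity. Since $B_{r_1}(b)\subset\overline{B}_1\setminus\{a_1,\ldots,a_N\}$, we have $|u_\varepsilon|\ge\tilde\beta$ on $B_{r_1}(b)\cap\overline{B}_1$ for small $\varepsilon$, so we can write $u_\varepsilon=\rho_\varepsilon e^{i\varphi_\varepsilon}$ there. On $\partial B_1\cap B_{r_1}(b)$ the representation matches the boundary condition, i.e.\ $\rho_\varepsilon\equiv 1$ and $\varphi_\varepsilon\equiv\varphi_g\pmod{2\pi}$, where $g=e^{i\varphi_g}$ locally and $\varphi_g$ is smooth.

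First I would combine \rth{th:bound} with $\int|\nabla\rho_\varepsilon|^2\le C\varepsilon$ from \eqref{eq:upper} and use Fubini to pick $\tilde r\in(3r_1/4,r_1)$ with
\[
\int_{\partial B_{\tilde r}(b)\cap B_1}|\nabla\varphi_\varepsilon|^2+\varepsilon^{-1}|\nabla\rho_\varepsilon|^2\le C.
\]
Passing to a subsequence gives uniform and $H^{1/2}$-convergence $\varphi_\varepsilon\to\varphi_0$ and $\rho_\varepsilon\to c_0$ on the arc $\partial B_{\tilde r}(b)\cap B_1$; at the two endpoints of this arc $\varphi_0$ must agree with the value of $\varphi_g$ by continuity. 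Adapting the argument producing \eqref{eq:46}, using a M\"obius transformation sending an interior point of $\Omega^b_{\tilde r}$ to $0$, one finds $1-C\varepsilon^{1/2}\le\rho_\varepsilon\le 1$ on the arc, hence $c_0=1$ and $\rho_\varepsilon\to 1$ in $H^1(\Omega^b_{\tilde r})$.

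Next I would define $u_0:=e^{i\widetilde\varphi_0}$, where $\widetilde\varphi_0$ is the harmonic extension of $\varphi_0$ to $\Omega^b_{\tilde r}$ with mixed Dirichlet data (limit trace on the interior arc, $\varphi_g$ on the boundary arc). The $H^1$-convergence $u_{\varepsilon_n}\to u_0$ on $\Omega^b_{\tilde r}$ follows by repeating the comparison argument of \rprop{prop:conv-int}: build the competitor $v_\varepsilon=\widetilde\rho_\varepsilon e^{i\widetilde\varphi_\varepsilon}$, where $\widetilde\rho_\varepsilon,\widetilde\varphi_\varepsilon$ are the harmonic functions with the obvious mixed boundary data ($\rho_\varepsilon,\varphi_\varepsilon$ on the interior arc; $1,\varphi_g$ on $\partial B_1$), invoke $E_\varepsilon(u_\varepsilon;\Omega^b_{\tilde r})\le E_\varepsilon(v_\varepsilon;\Omega^b_{\tilde r})$, and use lower semicontinuity. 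For the higher-regularity step, write $\varphi_\varepsilon=\widetilde\varphi_\varepsilon+\psi_\varepsilon$ where $\psi_\varepsilon$ vanishes on $\partial B_1\cap\Omega^b_{\tilde r}$ (since both $\varphi_\varepsilon$ and $\widetilde\varphi_\varepsilon$ coincide with the smooth $\varphi_g$ there) and bootstrap the system \eqref{eq:EL} as in the proof of \rth{th:deg-zero}, now using boundary $W^{k,p}$-estimates up to $\partial B_1$.

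The main obstacle I anticipate is the corner of $\Omega^b_{\tilde r}$, where $\partial B_{\tilde r}(b)$ meets $\partial B_1$: standard boundary elliptic regularity requires a smooth boundary near the points where estimates are applied. To bypass this, I would interpose an auxiliary sub-domain $\Omega'$ with smooth boundary, satisfying $B_{r_1/2}(b)\cap\overline{B_1}\subset\Omega'\subset\Omega^b_{\tilde r}$, constructed so that $\partial\Omega'\setminus\partial B_1$ is a smooth curve in $B_1$ that meets $\partial B_1$ (say, orthogonally) in such a way that $\partial\Omega'$ is $C^\infty$ without corners. All bootstrap estimates are then performed on $\Omega'$, producing the required $C^k(B_{r_1/2}(b)\cap B_1)$-convergence. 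The rest of the argument —  the $L^p$-bound for $\nabla\varphi_\varepsilon$ via \eqref{eq:59} with sufficiently large $\tilde\beta$, the $L^\infty$-estimate for $1-\rho_\varepsilon$ via elliptic estimates and maximum principle, and the induction on $k$ —  proceeds exactly as in the interior case.
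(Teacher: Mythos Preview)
Your proposal is correct and follows essentially the same route as the paper. Two minor points of comparison are worth noting.

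First, the paper obtains the boundary analogue of \eqref{eq:46} more directly than you do: since the interior arc $\partial B_{\tilde r}(b)\cap B_1$ has its two endpoints $q$ on $\partial B_1$, where $\rho_\varepsilon(q)=1$, one simply integrates $|\nabla\rho_\varepsilon|$ along the arc from $q$ and applies Cauchy--Schwarz to get $|\rho_\varepsilon(x)-1|\le C\varepsilon^{1/2}$ immediately (this is \eqref{eq:51}). Your M\"obius-transformation detour would work but is unnecessary here; the boundary case is in fact \emph{easier} than the interior one precisely because the arc touches the set $\{\rho_\varepsilon=1\}$.

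Second, the paper's proof is terse and does not address the corner issue you raise; it simply says ``the rest of the proof follows by the same arguments as in the proof of \rprop{prop:conv-int}''. Your device of interposing a smooth auxiliary sub-domain $\Omega'$ between $B_{r_1/2}(b)\cap\overline{B}_1$ and $\Omega^b_{\tilde r}$ to run the boundary elliptic bootstrap is a clean and correct way to make this step rigorous, and is arguably more careful than the paper's treatment.
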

 \begin{proof}
   As in the proof of \rprop{prop:conv-int} we may use Fubini to find $\tilde r\in((3/4)r_1,r_1)$ such that
   \begin{equation}
    \label{eq:50}
     \int_{\partial B_{\tilde r}(b)\cap B_1}
     |\nabla\varphi_\varepsilon|^2+\frac{1}{\varepsilon}|\nabla\rho_\varepsilon|^2\le C.
   \end{equation}
   Denoting by $q$ any of the two points in $\partial B_{\tilde
     r}(b)\cap \partial B_1$, we obtain by the Cauchy-Schwarz
   inequality that
   \begin{equation}
     \label{eq:51}
     |\rho_\varepsilon(x)-1|=
     |\rho_\varepsilon(x)-\rho_\varepsilon(q)|\le
     C\varepsilon^{1/2},\quad\forall x\in \partial B_{\tilde r}(b)\cap B_1\,,
   \end{equation}
    which is the analogue of \eqref{eq:46} in our setting. The rest of
    the proof follows by the same arguments as in the proof of
    \rprop{prop:conv-int}. \qed
  \end{proof}
  \subsection{Conclusion of the proof of \rth{th:main1}}
   As explained in the Introduction, we may assume that $\Omega=B_1$.
   \begin{proof}[of \rth{th:main1}]
      The inequality \eqref{eq:42} is the result of \rcor{cor:energy}. 
    The convergence result \eqref{eq:91} follows from \rprop{prop:conv-int} and
    \rprop{prop:conv-bd}. The fact that $d_j>0$ for all $j$ follows
    from \eqref{eq:bound-kappa}.
    \par Next we prove that $a_j\in B_1$ for all
    $j$, that is, singularities cannot occur on the boundary. 
The proof is the same as that of \cite[Theorem X.4]{BBH}, so we
  just describe the main idea. By Pohozaev identity \eqref{eq:Poho}
  and \rprop{prop:conv-bd} it follows that
  \begin{equation}
  \label{eq:52}
   \int_{\partial B_1} \left|\frac{\partial u_*}{\partial r}\right|^2<\infty.
  \end{equation}
Since by \rprop{prop:W1p} we also have
$u_{\varepsilon_n}\rightharpoonup u_*$ weakly in $W^{1,p}$, for all
$p\in(1,2)$ it follows that $u_*\in W^{1,p}(B_1; S^1)$ for all
 $p\in[1,2)$.
 Therefore, all the hypotheses of \cite[Lemma X.14]{BBH} are satisfied,
 and we can conclude that $u_*$ is smooth in a neighborhood of
 $\partial B_1$.

 Finally we show that $u_*$ is the canonical harmonic map associated
 with $g$, the singularities and their degrees. By
 \rprop{prop:W1p} we can
 pass to the limit $\varepsilon\to0^{+}$ in \eqref{eq:72} and 
 deduce that
 \begin{equation}
   \label{eq:76}
    \frac{\partial}{\partial x_1}\Big(u_*\wedge
    (u_*)_{x_1}\Big)+
    \frac{\partial}{\partial x_2}\Big(u_*\wedge (u_*)_{x_2}\Big)=0\,.
  \end{equation}
   But by \cite[Remark I.1]{BBH} the only $S^1$--valued harmonic map in
   $W^{1,1}(\Omega)$ satisfying \eqref{eq:76} is the canonical one. \qed
 \end{proof}
 \section{Proof of \rth{th:main2}}
 \label{sec:main2}
\subsection{An improved upper bound for $E_\varepsilon(u_\varepsilon)$}
 We begin with the easy part, the upper bound, in the estimate \eqref{eq:102}.
 \begin{proposition}
   \label{prop:ref-ub}
     Under the assumptions of \rth{th:main1} we have
     \begin{equation}
       \label{eq:66}
       \limsup_{\varepsilon\to0^{+}}
       E_\varepsilon(u_\varepsilon)-\frac{2\pi D}{\varepsilon}\le
         d^2_{H^{1/2}}(g,\mathcal{H}_D(\partial\Omega))\,.
     \end{equation}
     In fact, for each fixed $\varepsilon>0$ we have
     \begin{equation}
       \label{eq:71}
       E_\varepsilon(u_\varepsilon)-\frac{2\pi D}{\varepsilon}\le  d^2_{H^{1/2}}(g,\mathcal{H}_D(\partial\Omega)).
     \end{equation}
   \end{proposition}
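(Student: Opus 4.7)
The plan is to produce, for each fixed $\varepsilon>0$, a competitor $v_\varepsilon \in H^1_g(\Omega)$ with $E_\varepsilon(v_\varepsilon) \le \tfrac{2\pi D}{\varepsilon} + d_{H^{1/2}}^2(g,\mathcal{H}_D(\partial\Omega))$; then \eqref{eq:71} follows from the minimality of $u_\varepsilon$, and \eqref{eq:66} is immediate. By conformal invariance we may take $\Omega=B_1$. Fix any $f\in\mathcal{H}_D(\partial B_1)$, write $f=F|_{\partial B_1}$ with $F$ a Blaschke product, and consider the explicit minimizer $U^F_\varepsilon:=|F|^\varepsilon\,(F/|F|)$ supplied by \rprop{prop:B}, which satisfies $E_\varepsilon(U^F_\varepsilon)=\tfrac{2\pi D}{\varepsilon}$. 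Since $\deg(g\bar f)=0$, write $g\bar f=e^{i\psi}$ on $\partial B_1$ and pick any $\widetilde\psi\in H^1(B_1)$ extending $\psi$; the candidate is $v_\varepsilon:=e^{i\widetilde\psi}\,U^F_\varepsilon$, which indeed agrees with $g$ on $\partial B_1$.

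A direct expansion (treating the maps as $\C$-valued) gives $|v_\varepsilon|=|U^F_\varepsilon|$ and
\[
|\nabla v_\varepsilon|^2 = |\nabla U^F_\varepsilon|^2 + |U^F_\varepsilon|^2|\nabla\widetilde\psi|^2 + 2\,\nabla\widetilde\psi\cdot(U^F_\varepsilon\wedge\nabla U^F_\varepsilon),
\]
so that
\[
E_\varepsilon(v_\varepsilon) = \tfrac{2\pi D}{\varepsilon} + \int_{B_1}|F|^{2\varepsilon}|\nabla\widetilde\psi|^2 + 2\int_{B_1}\nabla\widetilde\psi\cdot(U^F_\varepsilon\wedge\nabla U^F_\varepsilon).
\]
The middle integral is bounded by $\|\nabla\widetilde\psi\|_{L^2(B_1)}^2$ since $|F|\le 1$ on $B_1$. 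The decisive step is to show the cross-term vanishes. Because $U^F_\varepsilon$ is a minimizer, \rprop{prop:wedge} yields $\operatorname{div}(U^F_\varepsilon\wedge\nabla U^F_\varepsilon)=0$ in $\mathcal{D}'(B_1)$; as this $L^2$ field has $L^2$ divergence, integration by parts against $\widetilde\psi\in H^1(B_1)$ reduces the cross-term to the boundary integral $\int_{\partial B_1}\widetilde\psi\,(U^F_\varepsilon\wedge\partial_\nu U^F_\varepsilon)\,d\sigma$.

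Near $\partial B_1$ the Blaschke product $F$ is holomorphic and nonzero, hence can be written there as $F=e^{\ln|F|+i\varphi}$ with $\ln|F|$ and $\varphi$ conjugate harmonic; a direct computation gives $U^F_\varepsilon\wedge\partial_\nu U^F_\varepsilon=|F|^{2\varepsilon}\,\partial_\nu\varphi$. The Cauchy--Riemann equations then force $\partial_\nu\varphi=-\partial_\tau\ln|F|$, which vanishes on $\partial B_1$ because $|F|\equiv 1$ there; hence the boundary trace is identically zero. This yields $E_\varepsilon(u_\varepsilon)\le E_\varepsilon(v_\varepsilon)\le \tfrac{2\pi D}{\varepsilon}+\|\nabla\widetilde\psi\|_{L^2(B_1)}^2$, and taking the infimum first over admissible $\widetilde\psi$ (producing $d_{H^{1/2}}^2(g,f)$ by \eqref{eq:65}) and then over $f\in\mathcal{H}_D(\partial B_1)$ proves \eqref{eq:71}. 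The main technical hurdle is handling the cross-term: doing this pointwise would require isolating small discs around each zero of $F$ and controlling boundary contributions from the singularities of $\varphi$, but the global weak identity of \rprop{prop:wedge} together with $H(\operatorname{div})$--trace theory bypasses this difficulty cleanly.
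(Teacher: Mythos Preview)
Your proof is correct and follows essentially the same route as the paper. The paper also fixes a Blaschke product $\mathcal{B}_{\mathbf b}$, sets $v_\varepsilon=e^{i\widetilde\psi}U_{\mathbf b,\varepsilon}$ with $\widetilde\psi$ the harmonic extension of a lifting of $g\overline{\mathcal{B}_{\mathbf b}}$, and kills the cross-term via $\Div(\bar\rho_\varepsilon^2\nabla\Theta)=0$ together with $\partial_\nu\Theta=-\partial_\tau\ln|\mathcal{B}_{\mathbf b}|=0$ on $\partial B_1$; this is exactly your computation written in the $(\rho,\Theta)$ variables rather than the wedge notation (note $U^F_\varepsilon\wedge\nabla U^F_\varepsilon=\bar\rho_\varepsilon^2\nabla\Theta$), and your appeal to \rprop{prop:wedge} plays the role of the paper's direct use of the first Euler--Lagrange equation.
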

   \begin{proof}
    As before we assume w.l.o.g.~that $\Omega=B_1$.  Fix any ${\bf b}\in B_1^D$. We know from
     Subsection~\ref{subsec:blaschke} that 
     \begin{equation*}
       U_{{\bf b},\varepsilon}(z)=| {\cal B}_{\bf b}(z)|^\varepsilon\left(\frac{{\cal B}_{\bf b}(z)}{|{\cal B}_{\bf b}(z)|}\right)
       \end{equation*}
        is a minimizer for $E_\varepsilon$ for its own boundary data,
        with $E_\varepsilon(U_{{\bf b},\varepsilon})=2\pi D/\varepsilon$.
Set $\bar\rho_\varepsilon:=|U_{{\bf b},\varepsilon}|=| {\cal B}_{\bf b}(z)|^\varepsilon$ and write
     \begin{equation}
       \label{eq:67}
       U_{{\bf b},\varepsilon}(z)=\bar\rho_\varepsilon(z)\exp^{i\Theta(z)}\,.
     \end{equation}
   Note that although  $\Theta$ is well-defined only {\em locally} in
   $\overline{B}_1\setminus\{b_1,\ldots,b_D\}$, its gradient
     $\nabla\Theta$ is globally defined.  Let $\psi$ be a smooth lifting of $g/{\cal B}_{\bf
        b}\big|_{\partial B_1}$, that is, $g=e^{i\psi}{\cal B}_{\bf
        b}$ on $\partial B_1$, and let $\widetilde\psi$ denote the
      harmonic extension of $\psi$ to $B_1$.
We set $v_\varepsilon=e^{i\widetilde\psi} U_{{\bf b},\varepsilon}$ and
note that $v_\varepsilon=g$ on $\partial B_1$. 
Using  $|v_\varepsilon|=|U_{{\bf b},\varepsilon}|=\bar
\rho_\varepsilon$ we get
\begin{equation}
  \label{eq:68}
  \begin{aligned}
  E_\varepsilon(u_\varepsilon)&\le E_\varepsilon(v_\varepsilon)=\int_{B_1}{\varepsilon}^{-2}|\nabla
  \bar\rho_\varepsilon|^2+\bar\rho_\varepsilon^2\big(|\nabla\Theta|^2+
  2\nabla\Theta\cdot\nabla\widetilde\psi+|\nabla\widetilde\psi|^2\big)
  \\&=
  E_\varepsilon(U_{{\bf
      b},\varepsilon})+2\int_{B_1}\bar\rho_\varepsilon^2\nabla\Theta\cdot\nabla\widetilde\psi+\int_{B_1}\bar\rho_\varepsilon^2|\nabla\widetilde\psi|^2\\
   &=\frac{2\pi
     D}{\varepsilon}+2\int_{B_1}\bar\rho_\varepsilon^2\nabla\Theta\cdot\nabla\widetilde\psi+\int_{B_1}\bar\rho_\varepsilon^2|\nabla\widetilde\psi|^2\,.
  \end{aligned} 
\end{equation}
 Next we recall that $\Theta$ is a harmonic conjugate of
 $h:=(1/\varepsilon)\ln\bar\rho_\varepsilon=\ln |{\cal B}_{\bf
   b}|$. The function $h$ is defined globally in $B_1$,
 having singularities at the points $b_1,\ldots,b_D$. Moreover,
 \begin{equation}
   \label{eq:70}
   h=0~\text{ and
   }\frac{\partial\Theta}{\partial\nu}=-\frac{\partial h}{\partial\tau}=0~\text{ on }\partial B_1.
 \end{equation}
 Therefore,
 \begin{equation}
   \label{eq:69}
   \int_{B_1}\bar\rho_\varepsilon^2\nabla\Theta\cdot\nabla\widetilde\psi=-\int_{B_1}\Div(\bar\rho_\varepsilon^2\nabla\Theta)\widetilde\psi+\int_{\partial B_1}\bar\rho_\varepsilon^2\left(\frac{\partial\Theta}{\partial\nu}\right)\widetilde\psi=
   -\int_{\partial B_1}\bar\rho_\varepsilon^2\left(\frac{\partial h}{\partial\tau}\right)\widetilde\psi=0\,,
 \end{equation}
  where we used $\Div(\bar\rho_\varepsilon^2\nabla\Theta)=0$ in $B_1$
  and \eqref{eq:70} on $\partial B_1$. Plugging \eqref{eq:69} in \eqref{eq:68} yields
  \begin{equation}
    \label{eq:214}
E_\varepsilon(u_\varepsilon)\le
E_\varepsilon(v_\varepsilon)\le\frac{2\pi D}{\varepsilon}+\int_{B_1}\bar\rho_\varepsilon^2|\nabla\widetilde\psi|^2\,.
 \end{equation}
  Since the configuration ${\bf b}\in B_1^D$ is arbitrary we deduce
  from the definition \eqref{eq:110} of $d_{H^{1/2}}^2(g,
  \mathcal{H}_D(\partial B_1))$ and \eqref{eq:214} that
  \eqref{eq:71} holds, whence also \eqref{eq:66}.\qed
\end{proof}
\subsection{The limit of $\ln\rho_\varepsilon/\varepsilon$ and $(\rho_\varepsilon-1)/\varepsilon$} 
We begin with a local $L^\infty$-bound for
$|\nabla\rho_\varepsilon|/\varepsilon$, away from $\partial B_1$ and the points $a_1,\ldots,a_N$.
\begin{lemma}
  \label{lem:bound-grad}
  For every small $\eta>0$ we have
  \begin{equation}
    \label{eq:103}
    |\nabla\rho_\varepsilon|/\varepsilon\le C_\eta~\text{ on
    }~B_{1-\eta}\setminus\bigcup_{j=1}^N B_\eta(a_j), \quad\forall \varepsilon\in(0,1)\,. 
  \end{equation}
\end{lemma}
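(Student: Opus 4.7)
By conformal invariance we take $\Omega=B_1$, and from the proof of \rth{th:main1} we already know all $a_j\in B_1$. Set $\Omega_\eta:=B_{1-\eta}\setminus\bigcup_j B_\eta(a_j)$. My plan is a two-step argument: first upgrade the crude rate $|1-\rho_\varepsilon|\le C_\eta\varepsilon^{1/2}$ (available locally from the proof of \rprop{prop:conv-int}) to the sharp rate $|1-\rho_\varepsilon|\le C_\eta\varepsilon$ on $\Omega_\eta$, and then apply an interior $C^1$ elliptic estimate to $w_\varepsilon:=(\rho_\varepsilon-1)/\varepsilon$.

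Gathering the needed ingredients: by \rprop{prop:conv-int}, \rprop{prop:conv-bd} and a compactness covering we have the uniform estimates $\|\rho_\varepsilon-1\|_{L^\infty(\Omega_{\eta/4})}\le C_\eta\varepsilon^{1/2}$ (cf.\ \eqref{eq:63}) and $\|\nabla u_\varepsilon\|_{L^\infty(\Omega_{\eta/4})}\le C_\eta$, hence $\|\nabla\varphi_\varepsilon\|_{L^\infty(\Omega_{\eta/4})}\le C_\eta$; the second equation in \eqref{eq:EL} then gives $|\Delta\rho_\varepsilon|\le C_\eta\varepsilon^2$ on $\Omega_{\eta/4}$, and a standard interior $C^1$ elliptic estimate yields $\|\nabla\rho_\varepsilon\|_{L^\infty(\Omega_{\eta/2})}\le C_\eta\varepsilon^{1/2}$. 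On the global side, using $E_\varepsilon(u)=\int_{B_1}(\varepsilon^{-2}|\nabla\rho|^2+\rho^2|\nabla\varphi|^2)$ (cf.\ \eqref{eq:333}) together with \eqref{eq:42}, one gets $\int_{B_1}|\nabla\rho_\varepsilon|^2\le C\varepsilon$ and $\varepsilon^2\int_{B_1}\rho_\varepsilon^2|\nabla\varphi_\varepsilon|^2\le C\varepsilon$.

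The key step is to analyse $V_\varepsilon:=1-\rho_\varepsilon^2\ge 0$, which vanishes on $\partial B_1$ and, by \eqref{eq:41} (after substituting $|\nabla u|^2=|\nabla\rho|^2+\rho^2|\nabla\varphi|^2$), solves $-\Delta V_\varepsilon=F_\varepsilon$ with
\[F_\varepsilon:=2|\nabla\rho_\varepsilon|^2+2\varepsilon^2\rho_\varepsilon^2|\nabla\varphi_\varepsilon|^2\ge 0.\]
By the previous paragraph $\|F_\varepsilon\|_{L^1(B_1)}\le C\varepsilon$ globally, while $\|F_\varepsilon\|_{L^\infty(\Omega_{\eta/2})}\le C_\eta\varepsilon$ locally. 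Using the Dirichlet Green's function $G$ of $-\Delta$ on $B_1$ (which satisfies $\|G(x,\cdot)\|_{L^1(B_1)}\le 1/4$), split
\[V_\varepsilon(x)=\int_{\Omega_{\eta/2}}\!\!G(x,y)F_\varepsilon(y)\,dy+\int_{B_1\setminus\Omega_{\eta/2}}\!\!G(x,y)F_\varepsilon(y)\,dy.\]
For $x\in\Omega_\eta$, the first integral is bounded by $\|F_\varepsilon\|_{L^\infty(\Omega_{\eta/2})}\cdot\|G(x,\cdot)\|_{L^1}\le C_\eta\varepsilon$; for the second, one easily checks that $|x-y|\ge\eta/2$ whenever $x\in\Omega_\eta$ and $y\in B_1\setminus\Omega_{\eta/2}$, so $G(x,y)\le C_\eta$ there, and that integral is bounded by $C_\eta\|F_\varepsilon\|_{L^1(B_1)}\le C_\eta\varepsilon$. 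Hence $V_\varepsilon\le C_\eta\varepsilon$ on $\Omega_\eta$, which since $1+\rho_\varepsilon\ge 1$ gives $|1-\rho_\varepsilon|\le C_\eta\varepsilon$ on $\Omega_\eta$.

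The conclusion is then immediate: $w_\varepsilon=(\rho_\varepsilon-1)/\varepsilon$ satisfies $\|w_\varepsilon\|_{L^\infty(\Omega_\eta)}\le C_\eta$ and $|\Delta w_\varepsilon|=\varepsilon\rho_\varepsilon|\nabla\varphi_\varepsilon|^2\le C_\eta\varepsilon$ on $\Omega_\eta$, so a standard interior $C^1$ estimate on a slightly smaller domain gives $|\nabla w_\varepsilon|\le C_\eta$ on $\Omega_{2\eta}$, i.e., $|\nabla\rho_\varepsilon|/\varepsilon\le C_\eta$; replacing $\eta$ by $\eta/2$ at the start yields \eqref{eq:103}. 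The main obstacle is precisely the upgrade from the $\varepsilon^{1/2}$- to the $\varepsilon$-rate on $1-\rho_\varepsilon$: away from the concentration set we already know $F_\varepsilon$ is pointwise $O(\varepsilon)$, while near the concentration set we only know its total $L^1$-mass is $O(\varepsilon)$, and the Green's function splitting above is exactly what lets the two pieces of information be combined cleanly.
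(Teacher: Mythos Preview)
Your argument is correct, and it follows a genuinely different route from the paper's. The paper exploits the holomorphic Hopf differential $\chi_\varepsilon$ (Proposition~\ref{prop:hopf}): the Pohozaev identity \eqref{eq:Poho} gives an $L^1(\partial B_1)$ bound on the real and imaginary parts of $\chi_\varepsilon$, whence by the Poisson formula an $L^\infty(B_{1-\eta})$ bound; since the contribution of $|\nabla u_\varepsilon|^2$ to $\chi_\varepsilon$ is already bounded away from the $a_j$'s by \rth{th:main1}, the remaining $(\frac{1}{\varepsilon^2}-1)$-weighted quadratic terms in $\nabla\rho_\varepsilon$ are forced to be bounded, and the identity $(\rho_x^2-\rho_y^2)^2+(2\rho_x\rho_y)^2=(\rho_x^2+\rho_y^2)^2$ finishes the job in one line. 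Your proof instead bootstraps purely through elliptic PDE: you first extract from the proof of \rprop{prop:conv-int} the preliminary rates $1-\rho_\varepsilon=O(\varepsilon^{1/2})$ and $|\nabla\varphi_\varepsilon|=O(1)$ away from the singularities, then use the equation \eqref{eq:41} for $\rho_\varepsilon^2$ together with a Green's function splitting (local $L^\infty$ control where $F_\varepsilon$ is small, global $L^1$ control where it concentrates) to upgrade to $1-\rho_\varepsilon=O(\varepsilon)$, and finally apply an interior gradient estimate to $w_\varepsilon=(\rho_\varepsilon-1)/\varepsilon$.

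The paper's route is shorter and more structural: it leverages the special algebraic form of the energy through the Hopf differential, and gets the gradient bound without ever passing through a pointwise bound on $1-\rho_\varepsilon$ (indeed, in the paper the bound $1-\rho_\varepsilon\le C_\eta\varepsilon$ of \rprop{prop:rhoto1} is deduced \emph{after} \rlemma{lem:bound-grad}, via a separate Trudinger-inequality argument in \rprop{prop:away}). Your route is more elementary---it avoids the Hopf differential and the Pohozaev identity entirely---and has the pleasant side effect of producing the sharp pointwise rate $1-\rho_\varepsilon=O(\varepsilon)$ along the way; however, it leans more heavily on the local $C^k$ estimates from Section~\ref{sec:main1} (specifically the uniform bounds underlying \rprop{prop:conv-int}), so it is less self-contained. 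Both approaches use \rth{th:main1} for the bound $|\nabla u_\varepsilon|\le C_\eta$ away from the singularities.
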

\begin{proof}
  For simplicity we now drop the subscript $\varepsilon$.
   From \rcor{cor:Pohozaev} we get that
   \begin{equation*}
     \int_{\partial B_1}
     |u_\nu|^2+\frac{\rho_\nu^2}{\varepsilon^2}=\int_{\partial
       B_1} |u_\tau|^2\le C.
   \end{equation*}
   Therefore,
   \begin{equation}
     \label{eq:104}
     \int_{\partial B_1}
     (|u_x|^2+|u_y|^2)+\frac{1}{\varepsilon^2}(\rho_x^2+\rho_y^2)=
     \int_{\partial B_1}
     (|u_\nu|^2+|u_\tau|^2)+\frac{1}{\varepsilon^2}(\rho_\nu^2+\rho_\tau^2)\le C.
   \end{equation}
   Let us denote, as in \eqref{eq:84}, $U=\big(u,
   \big(\frac{1}{\varepsilon^2}-1\big)^{1/2}\rho\big)$ and
   consider the two harmonic functions 
   $h_1=\big|U_x\big|^2-\big|U_y\big|^2$ and $h_2=2U_x\cdot U_y$.
   From \eqref{eq:104} we deduce that
   \begin{equation}
     \label{eq:105}
     \int_{\partial B_1}|h_1|=  \int_{\partial
       B_1}\Big|\big|U_x\big|^2-\big|U_y\big|^2\Big|\le \int_{\partial
       B_1}\big|U_x\big|^2+\big|U_y\big|^2\le C.
   \end{equation}
   Similarly,
   \begin{equation}
     \label{eq:106}
      \int_{\partial B_1}|h_2|\le  \int_{\partial B_1}2|U_x||U_y|\le \int_{\partial
       B_1}\big|U_x\big|^2+\big|U_y\big|^2\le C.
   \end{equation}
   From \eqref{eq:105}--\eqref{eq:106} and the Poisson formula it
   follows that
   \begin{equation}
     \label{eq:108}
     \|h_1\|_{L^\infty(B_{1-\eta})},  \|h_2\|_{L^\infty(B_{1-\eta})}\le
         C_\eta.
       \end{equation}
     
       Thanks to  \rth{th:main1} we also have, 
        \begin{equation}
        \label{eq:112}
         |\nabla u|\le C_\eta~\text{ on
         }B_1\setminus\bigcup_{j=1}^N B_\eta(a_j).
       \end{equation}
       Combining \eqref{eq:108} with \eqref{eq:112} yields
       \begin{equation}
         \label{eq:183}
         \left|\Big(\frac{\rho_x}{\varepsilon}\Big)^2-\Big(\frac{\rho_y}{\varepsilon}\Big)^2\right|\le
         C_\eta~\text{ and}~\left|\Big(\frac{\rho_x}{\varepsilon}\Big)\Big(\frac{\rho_y}{\varepsilon}\Big)\right|\le
         C_\eta \quad\text{ on
         }B_{1-\eta}\setminus\bigcup_{j=1}^N B_\eta(a_j).
       \end{equation}
      Since
      $(\rho_x^2-\rho_y^2)^2+(2\rho_x\rho_y)^2=(\rho_x^2+\rho_y^2)^2$,
      \eqref{eq:103} follows from \eqref{eq:183}. \qed
     \end{proof}
      The next result provides a crucial bound for the energy away
      from the singularities of $u_*$.
      \begin{proposition}
        \label{prop:away}
        Let $\eta>0$ satisfy
        \begin{equation}
          \label{eq:eta-cond}
          \eta<\frac{1}{2}\min_{i\ne j} |a_i-a_j|~\text{ and
          }~\eta<\min_j (1-|a_j|).
        \end{equation}
        Then,
        \begin{equation}
          \label{eq:53}
          E_\varepsilon\big(u_\varepsilon;B_1\setminus\bigcup_{j=1}^NB_\eta(a_j)\big)\le C_\eta.
        \end{equation}
      \end{proposition}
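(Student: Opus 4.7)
The plan is to combine the refined upper bound of Proposition~\ref{prop:ref-ub}, $E_\varepsilon(u_\varepsilon)\le\frac{2\pi D}{\varepsilon}+C$, with a matching per-disk lower bound
\[
E_\varepsilon(u_\varepsilon;\,B_\eta(a_j))\ \ge\ \frac{2\pi d_j}{\varepsilon}-C_\eta,\qquad j=1,\dots,N.
\]
Once the latter is established, summing over $j$ (using $\sum_j d_j=D$) and subtracting from the upper bound immediately yields $E_\varepsilon(u_\varepsilon;\,B_1\setminus\bigcup_j B_\eta(a_j))\le C_\eta$.

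For the single-disk lower bound I would redo the Cauchy--Schwarz / coarea chain of \rprop{prop:lb} restricted to $B_\eta(a_j)$. By the $C^m_{\mathrm{loc}}$ convergence of Theorem~\ref{th:main1}, for $\varepsilon$ small enough the degree of $u_\varepsilon/|u_\varepsilon|$ on $\partial B_\eta(a_j)$ equals $d_j$ and $m_\varepsilon^{(j)}:=\min_{\partial B_\eta(a_j)}\rho_\varepsilon\to 1$. For a.e.\ $t\in(0,m_\varepsilon^{(j)})$ the level set $\{\rho_\varepsilon=t\}\cap B_\eta(a_j)$ is a union of smooth closed curves encircling $a_j$ with total winding number $d_j$, so Cauchy--Schwarz and coarea as in \eqref{eq:CS}--\eqref{eq:111} give
\[
E_\varepsilon(u_\varepsilon;B_\eta(a_j))\ \ge\ \frac{2\pi d_j\,(m_\varepsilon^{(j)})^{2}}{\varepsilon}.
\]
What remains is to control the loss $\frac{2\pi d_j\,(1-(m_\varepsilon^{(j)})^2)}{\varepsilon}$ by a constant depending on $\eta$, that is, to prove $1-\rho_\varepsilon\le C_\eta\varepsilon$ pointwise on $\partial B_\eta(a_j)$.

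For this sharp bound I would use the subharmonicity identity \eqref{eq:41} together with a Green's function representation. The function $w:=1-\rho_\varepsilon^2\ge 0$ vanishes on $\partial B_1$ and satisfies $-\Delta w=2\varepsilon^2 e_\varepsilon(u_\varepsilon)\ge 0$, where $e_\varepsilon$ denotes the energy density. Denoting by $G$ the Dirichlet Green's function on $B_1$, this yields
\[
1-\rho_\varepsilon^2(x)\ =\ 2\varepsilon^2\int_{B_1} G(x,y)\,e_\varepsilon(u_\varepsilon)(y)\,dy,\qquad x\in B_1.
\]
For $x\in\partial B_\eta(a_j)$, split this integral into a small ball $B_{r_0}(x)$ (with $r_0$ chosen so that $\overline{B_{r_0}(x)}$ lies in the region where both Theorem~\ref{th:main1} and Lemma~\ref{lem:bound-grad} apply, so that $e_\varepsilon(u_\varepsilon)\le C_\eta$ pointwise) and its complement (on which $G(x,\cdot)\le C_\eta$). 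The near part contributes $O(\varepsilon^2)$ because the logarithmic singularity of $G(x,\cdot)$ is locally integrable against the bounded $e_\varepsilon$, while the far part is at most $C_\eta\varepsilon^2 E_\varepsilon(u_\varepsilon)\le C_\eta\varepsilon$ by Proposition~\ref{prop:ref-ub}. Hence $1-\rho_\varepsilon(x)\le C_\eta\varepsilon$ uniformly on $\partial B_\eta(a_j)$, giving $(m_\varepsilon^{(j)})^2\ge 1-C_\eta\varepsilon$.

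The main obstacle is precisely this Green's function estimate: it requires a clean pointwise bound on the full energy density $e_\varepsilon(u_\varepsilon)$ on a fixed-size neighborhood of $\bigcup_j\partial B_\eta(a_j)$, which is exactly what Theorem~\ref{th:main1} (bound on $|\nabla u_\varepsilon|$) combined with Lemma~\ref{lem:bound-grad} (bound on $|\nabla\rho_\varepsilon|/\varepsilon$) provides. Plugging $(m_\varepsilon^{(j)})^2\ge 1-C_\eta\varepsilon$ into the per-disk lower bound and subtracting from the upper bound $\frac{2\pi D}{\varepsilon}+C$ then completes the argument.
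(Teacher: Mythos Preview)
Your argument is correct, and both its overall architecture (global upper bound minus per-disk lower bounds) and the per-disk coarea estimate coincide with the paper's. The substantive difference lies in how you establish $1-m_j\le C_\eta\varepsilon$.

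The paper does not use the Green representation. Instead it builds an auxiliary function $S\in H^1_0(B_1)$ equal to $1-\rho_\varepsilon$ outside $\bigcup_j B_\eta(a_j)$ and constant ($=1-\overline m$) inside the half-balls, bounds $\int|\nabla S|^2\le \varepsilon^2 E_\varepsilon(u_\varepsilon;\Omega_\eta)+C_\eta\varepsilon^2$ using only \rlemma{lem:bound-grad}, and then applies Trudinger's inequality to get $(1-\overline m)^2\le C_\eta\varepsilon^2\big(E_\varepsilon(u_\varepsilon;\Omega_\eta)+1\big)$. Combining this with the per-disk lower bound and the upper bound yields a quadratic inequality in $(1-\overline m)/\varepsilon$ that forces $1-\overline m\le C_\eta\varepsilon$.

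Your Green-function route is more direct: once you have the pointwise density bound $e_\varepsilon(u_\varepsilon)\le C_\eta$ on an annular neighborhood of each $\partial B_\eta(a_j)$ (which \rth{th:main1} together with \rlemma{lem:bound-grad} indeed provides at this stage), the representation $1-\rho_\varepsilon^2(x)=2\varepsilon^2\int_{B_1}G(x,y)\,e_\varepsilon(u_\varepsilon)(y)\,dy$ gives $1-\rho_\varepsilon^2(x)\le C_\eta\varepsilon^2+C_\eta\varepsilon^2 E_\varepsilon(u_\varepsilon)\le C_\eta\varepsilon$ in one stroke, with no bootstrap and no Trudinger. The paper's argument, by contrast, uses less: it needs only the pointwise control of $|\nabla\rho_\varepsilon|/\varepsilon$ from \rlemma{lem:bound-grad}, not the additional bound on $|\nabla u_\varepsilon|$ from \rth{th:main1}. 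So your approach trades a slightly stronger (but already available) input for a shorter, cleaner argument.
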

      \begin{proof}
        For  $j=1,\ldots,N$ we denote
        \begin{equation}
          \label{eq:113}
          m_j=m_j(\varepsilon,\eta)=\min_{\partial
            B_\eta(a_j)}\rho_\varepsilon~\text{ and }~M_j=M_j(\varepsilon,\eta)=\max_{\partial
            B_\eta(a_j)}\rho_\varepsilon\,.
        \end{equation}
        Thanks to \eqref{eq:103} we have
        \begin{equation*}
          M_j-m_j\le C_\eta\varepsilon,~j=1,\ldots,N.
        \end{equation*}
        Actually, connecting pairs of  circles from $\{\partial B_\eta(a_j)\}_{j=1}^N$ to
        each other by segments allows us to deduce from \eqref{eq:103} that
        \begin{equation}
          \label{eq:114}
          |M_j-m_i|\le C_\eta\varepsilon,~i,j=1,\ldots,N.
        \end{equation}
Let us denote $\overline m=\min_j m_j$. By \eqref{eq:114} and
\eqref{eq:103} we have
\begin{equation}
  \label{eq:117}
  |\rho_\varepsilon-\overline m|\le C_\eta\varepsilon~\text{ on
  }~\bigcup_{j=1}^N \big(B_\eta(a_j)\setminus B_{\eta/2}(a_j)\big).
\end{equation}
Next we define  a function $S\in H^1_0(B_1)$ by
\begin{equation}
  \label{eq:118}
  S(x)=\begin{cases}
    1-\rho_\varepsilon(x) & x\in B_1\setminus\bigcup_{j=1}^N
    B_\eta(a_j),\\
    1-\big(\frac{2}{\eta}\big)\big((|x-a_j|-\frac{\eta}{2})\rho_\varepsilon(x)+(\eta-|x-a_j|)\overline
    m\big) & x\in B_\eta(a_j)\setminus B_{\eta/2}(a_j), 1\le j \le N,\\
1- \overline m & x\in B_{\eta/2}(a_j), 1\le j \le N.
\end{cases}
\end{equation}
Thanks to \eqref{eq:103} and
\eqref{eq:117} we have
\begin{equation}
  \label{eq:119}
  \int_{B_1}|\nabla S|^2\le\int_{B_1\setminus\bigcup_{j=1}^N
    B_\eta(a_j)}|\nabla\rho_\varepsilon|^2+C_\eta\varepsilon^2\le
  \varepsilon^2 E_\varepsilon(u_\varepsilon;
  B_1\setminus\bigcup_{j=1}^N B_\eta(a_j))+C_\eta\varepsilon^2.
\end{equation}
 Next we apply Trudinger's inequality to $S$, similarly to the way it
 was used in the proof of \cite[Lemma X.5]{BBH}. It yields, for some
 universal constants $\sigma_1,\sigma_2$,
 \begin{equation}
   \label{eq:120}
   \int_{B_1}\exp\left(\frac{|S|}{\sigma_1\|\nabla  S\|_2}\right)\le \sigma_2|B_1|.
 \end{equation}
 In particular, we obtain from \eqref{eq:120} that
 \begin{equation*}
   |B_{\eta/2}(a_1)|\exp\left(\frac{1-\overline m}{\sigma_1\|\nabla  S\|_2}\right) \le \sigma_2|B_1|\,,
 \end{equation*}
  which after some manipulations and application of \eqref{eq:119}
  leads to
  \begin{equation}
    \label{eq:122}
    1-\overline m\le C_\eta\varepsilon \Big(E_\varepsilon\big(u_\varepsilon;
  B_1\setminus\bigcup_{j=1}^N B_\eta(a_j)\big)+1\Big)^{1/2}.
\end{equation}

Next, the same argument that was used in the proof of \rprop{prop:lb} gives
\begin{equation}
  \label{eq:123}
  E_\varepsilon(u_\varepsilon;\bigcup_{j=1}^N
  B_\eta(a_j))\geq\frac{2}{\varepsilon} (2\pi D)\int_0^{\overline m}
  t\,dt=\frac{2\pi D}{\varepsilon}{\overline m}^2.
\end{equation}
Combining \eqref{eq:122}--\eqref{eq:123} with the upper bound from
\eqref{eq:97} yields
\begin{equation*}
  \frac{2\pi D}{\varepsilon}{\overline
    m}^2+C_\eta\frac{(1-\overline m)^2}{\varepsilon^2}\le \frac{2\pi D}{\varepsilon}+C,
\end{equation*}
implying that 
\begin{equation}
  \label{eq:125}
  1-\overline
 m\le C_\eta\varepsilon.
\end{equation}
Finally, plugging \eqref{eq:125} in 
\eqref{eq:123} yields $E_\varepsilon(u_\varepsilon;\bigcup_{j=1}^N
  B_\eta(a_j))\geq\frac{2\pi D}{\varepsilon}-C_\eta$, which together
  with \eqref{eq:97} leads to \eqref{eq:53}. \qed
\end{proof}
In the course of the proof of \rprop{prop:away} we also obtained the
necessary information needed to prove that
 $1-\rho_\varepsilon=O(\varepsilon)$ locally in
 $B_1\setminus\{a_1,\ldots,a_N\}$. More precisely:
 \begin{proposition}
   \label{prop:rhoto1}
   For every small $\eta>0$ we have
   \begin{equation}
     \label{eq:121}
     1-\rho_\varepsilon\le C_\eta\varepsilon~\text{ in
     }B_{1-\eta}\setminus\bigcup_{j=1}^N B_\eta(a_j).
   \end{equation}
 \end{proposition}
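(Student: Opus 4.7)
All the ingredients required are already contained in the proofs of \rlemma{lem:bound-grad} and \rprop{prop:away}; the strategy is simply to combine them with path integration. Fix $\eta>0$ satisfying \eqref{eq:eta-cond}, and apply \rlemma{lem:bound-grad} with $\eta$ replaced by $\eta/2$ to obtain
\begin{equation*}
|\nabla\rho_\varepsilon|\le C_\eta\,\varepsilon\quad\text{on}\quad K_\eta:=B_{1-\eta/2}\setminus\bigcup_{j=1}^N B_{\eta/2}(a_j)\,.
\end{equation*}
Note that the region $B_{1-\eta}\setminus\bigcup_j B_\eta(a_j)$ where we need the estimate is contained in $K_\eta$, and so are the annuli $\bigcup_j\bigl(B_\eta(a_j)\setminus B_{\eta/2}(a_j)\bigr)$.

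Next, redo the argument leading to \eqref{eq:117} and \eqref{eq:125} at the scale $\eta/2$: defining $\overline m=\min_j\min_{\partial B_{\eta/2}(a_j)}\rho_\varepsilon$, the chain of inequalities \eqref{eq:113}--\eqref{eq:125} in the proof of \rprop{prop:away} yields
\begin{equation*}
1-\overline m\le C_\eta\,\varepsilon\,,\qquad |\rho_\varepsilon-\overline m|\le C_\eta\,\varepsilon\ \ \text{on}\ \bigcup_{j=1}^N\bigl(B_\eta(a_j)\setminus B_{\eta/2}(a_j)\bigr)\,.
\end{equation*}
Combining these two inequalities, and using the general pointwise bound $\rho_\varepsilon\le 1$ (recalled at the beginning of Section \ref{sec:case-bound-cond}), we conclude that
\begin{equation}\label{eq:rhooncircles}
0\le 1-\rho_\varepsilon\le C_\eta\,\varepsilon\quad\text{on}\quad\bigcup_{j=1}^N \partial B_\eta(a_j)\,.
\end{equation}

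It remains to propagate \eqref{eq:rhooncircles} to the whole of $B_{1-\eta}\setminus\bigcup_j B_\eta(a_j)$. Given any point $x$ in this region, I would connect it by a piecewise smooth path $\gamma_x$, entirely contained in $K_\eta$, to some point $y\in\partial B_\eta(a_1)$, with length bounded by a constant $L=L(\eta)$ depending only on $\eta$ and the (finite, fixed) configuration $a_1,\ldots,a_N$. Integrating the gradient bound along $\gamma_x$ and invoking \eqref{eq:rhooncircles} at the endpoint $y$ yields
\begin{equation*}
1-\rho_\varepsilon(x)\le 1-\rho_\varepsilon(y)+L\cdot C_\eta\,\varepsilon\le C_\eta'\,\varepsilon\,,
\end{equation*}
which is \eqref{eq:121}. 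There is no real obstacle here—the only mildly delicate point is choosing $\eta/2$ (rather than $\eta$) as the scale at which to invoke \rlemma{lem:bound-grad} and the estimates of \rprop{prop:away}, so that both the endpoint estimate \eqref{eq:rhooncircles} and the gradient bound are available on a region strictly containing $B_{1-\eta}\setminus\bigcup_j B_\eta(a_j)$, leaving room to draw the connecting paths.\qed
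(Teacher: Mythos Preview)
Your proof is correct and follows essentially the same approach as the paper's: establish the bound $1-\rho_\varepsilon\le C_\eta\varepsilon$ on the circles $\bigcup_j\partial B_\eta(a_j)$ by combining the estimates \eqref{eq:114} and \eqref{eq:125} from the proof of \rprop{prop:away}, and then propagate this bound to the whole region by integrating the gradient estimate \eqref{eq:103} along paths of bounded length. Your explicit use of the scale $\eta/2$ to guarantee room for the connecting paths is a slight refinement over the paper's more compressed presentation, but the argument is the same.
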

 \begin{proof}
   First, combining \eqref{eq:125} with \eqref{eq:114} yields
   \begin{equation}
     \label{eq:127}
     1-\rho_\varepsilon\le C_\eta\varepsilon~\text{ on }\bigcup_{j=1}^N\partial B_\eta(a_j).
   \end{equation}
   Any point $x\in B_{1-\eta}\setminus\bigcup_{j=1}^N B_\eta(a_j)$
  can be connected to the closest circle, say $\partial B_\eta(a_{j_0})$.
 Using \eqref{eq:127} in conjunction with \eqref{eq:103} we
conclude that $(1-\rho_\varepsilon)(x)\le C_\eta\varepsilon$. \qed 
\end{proof}
Next we strengthen further our estimate  for $1-\rho_\varepsilon$.
\begin{proposition}
  \label{prop:rho-precise}
  \begin{equation}
    \label{eq:126}
    \lim_{\varepsilon\to0}\frac{\rho_\varepsilon-1}{\varepsilon}=\lim_{\varepsilon\to0}\frac{\ln\rho_\varepsilon}{\varepsilon}=\Phi_0~\text{
      in }C^k_{\text{loc}}(\overline{B}_1\setminus\{a_1,\ldots,a_N\}),\text{ for
      all }k\ge 1,
  \end{equation}
  where $\Phi_0$ is the solution of \eqref{eq:101}. 
\end{proposition}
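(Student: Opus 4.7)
The plan is to introduce the auxiliary function $\Psi_\varepsilon:=(1-\rho_\varepsilon^2)/(2\varepsilon)$, which is nonnegative (since $\rho_\varepsilon\le 1$), vanishes on $\partial B_1$, and, by the identity \eqref{eq:41} proved in the course of \rprop{prop:B}, satisfies distributionally on $B_1$
\begin{equation*}
 -\Delta\Psi_\varepsilon = \mu_\varepsilon,\qquad\mu_\varepsilon:=\varepsilon\Bigl(|\nabla u_\varepsilon|^2+\bigl(\tfrac{1}{\varepsilon^2}-1\bigr)|\nabla\rho_\varepsilon|^2\Bigr).
\end{equation*}
The first task is to prove the weak-$*$ convergence of measures $\mu_\varepsilon \rightharpoonup 2\pi\sum_{j=1}^N d_j\delta_{a_j}$ on $\overline{B}_1$. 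The total mass satisfies $\int_{B_1}\mu_\varepsilon = \varepsilon E_\varepsilon(u_\varepsilon)=2\pi D+O(\varepsilon)$ by \eqref{eq:42}; the mass on $B_1\setminus\bigcup_j B_\eta(a_j)$ equals $\varepsilon E_\varepsilon(u_\varepsilon;B_1\setminus\bigcup_j B_\eta(a_j))\le C_\eta\varepsilon$ by \rprop{prop:away}; and each $B_\eta(a_j)$ carries mass at least $2\pi d_j-O(\varepsilon)$ via a localized version of the Cauchy--Schwarz--coarea estimate of \rprop{prop:lb} applied inside $B_\eta(a_j)$, using that $\deg(u_\varepsilon/|u_\varepsilon|,\partial B_\eta(a_j))=d_j$ for small $\varepsilon$ (clustering of bad discs in \rth{th:main1}) and $\min_{\partial B_\eta(a_j)}\rho_\varepsilon = 1-O(\varepsilon)$ (by \rprop{prop:rhoto1}). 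Since $\sum_j d_j=D$, these three estimates force equality up to $O(\varepsilon)$ in each, and a standard partition argument with continuous test functions yields the asserted weak-$*$ convergence.

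Given the convergence of $\mu_\varepsilon$, let $G$ denote the Dirichlet Green's function on $B_1$ (so $\Delta_xG(x,y)=\delta_y$ and $G(x,\cdot)=0$ on $\partial B_1$). Green's representation reads
\begin{equation*}
\Psi_\varepsilon(x)=-\int_{B_1} G(x,y)\,\mu_\varepsilon(dy),
\end{equation*}
and for each fixed $x\in B_1\setminus\{a_1,\ldots,a_N\}$ the function $y\mapsto G(x,y)$ is continuous and bounded on $\overline{B}_1$, so weak-$*$ convergence yields the pointwise limit $\Psi_\varepsilon(x)\to-2\pi\sum_{j=1}^N d_jG(x,a_j)=-\Phi_0(x)$, where $\Phi_0$ is precisely the solution of \eqref{eq:101}. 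Observing that the function $\Phi_\varepsilon:=(\rho_\varepsilon-1)/\varepsilon$ targeted in the statement satisfies $\Phi_\varepsilon = -2\Psi_\varepsilon/(1+\rho_\varepsilon)$ and that $\rho_\varepsilon\to 1$ pointwise on $B_1\setminus\{a_j\}$ by \rth{th:main1}, we deduce $\Phi_\varepsilon\to\Phi_0$ pointwise on $B_1\setminus\{a_j\}$.

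To upgrade to $C^m_{\text{loc}}$ convergence, fix a compact $K\subset\overline{B}_1\setminus\{a_1,\ldots,a_N\}$. By \rth{th:main1} and \rlemma{lem:bound-grad}, $|\nabla u_\varepsilon|$ and $|\nabla\rho_\varepsilon|/\varepsilon$ are uniformly bounded on a neighborhood of $K$, whence $\mu_\varepsilon=O(\varepsilon)$ in $C^m(K)$ for every $m$. Combined with $\Psi_\varepsilon|_{\partial B_1}=0$ and the uniform bound $\|\Psi_\varepsilon\|_{L^\infty(K)}\le C_K$ supplied by \rprop{prop:rhoto1}, standard interior and boundary elliptic estimates for $-\Delta\Psi_\varepsilon=\mu_\varepsilon$ give $\|\Psi_\varepsilon\|_{C^{m+2}(K)}\le C_{K,m}$. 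Arzel\`a--Ascoli and the pointwise identification then force $\Psi_\varepsilon\to-\Phi_0$ in $C^m_{\text{loc}}(\overline B_1\setminus\{a_j\})$, and the smooth relation $\Phi_\varepsilon=-2\Psi_\varepsilon/(1+\rho_\varepsilon)$ transports this to $\Phi_\varepsilon\to\Phi_0$ in $C^m_{\text{loc}}$. Finally, $\rho_\varepsilon=1+\varepsilon\Phi_\varepsilon$ with $\Phi_\varepsilon$ bounded on $K$ gives $\ln\rho_\varepsilon/\varepsilon = \Phi_\varepsilon + O(\varepsilon)$ by Taylor expansion, so $\ln\rho_\varepsilon/\varepsilon$ shares the same $C^m_{\text{loc}}$ limit $\Phi_0$.

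The hardest step is the local lower bound $\mu_\varepsilon(B_\eta(a_j))\ge 2\pi d_j-O(\varepsilon)$: one must run \rprop{prop:lb}'s Cauchy--Schwarz--coarea argument inside a ball where $\rho_\varepsilon$ may vanish, integrating only over levels $t\in(0,\min_{\partial B_\eta(a_j)}\rho_\varepsilon)$ whose level curves are trapped inside $B_\eta(a_j)$ and carry winding number exactly $d_j$ around the bad discs contained in $B_\eta(a_j)$, and then summing these localized estimates and matching against the global total mass $2\pi D+O(\varepsilon)$ to squeeze each contribution to the sharp value $2\pi d_j+O(\varepsilon)$.
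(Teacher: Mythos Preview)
Your strategy---showing $\mu_\varepsilon:=\varepsilon\big(|\nabla u_\varepsilon|^2+(\varepsilon^{-2}-1)|\nabla\rho_\varepsilon|^2\big)\rightharpoonup 2\pi\sum d_j\delta_{a_j}$ and then reading off $\Psi_\varepsilon=(1-\rho_\varepsilon^2)/(2\varepsilon)\to-\Phi_0$ from the Green representation---is genuinely different from the paper's. The paper proceeds locally: it writes $(\rho_\varepsilon-1)/\varepsilon$ as a harmonic extension plus a remainder of order $\varepsilon$ (via the equation $\Delta\rho_\varepsilon=\varepsilon^2\rho_\varepsilon|\nabla\varphi_\varepsilon|^2$), obtains convergence to \emph{some} harmonic $\Phi$ on $B_1\setminus\{a_j\}$, treats the boundary by a separate Fubini/good-radius argument, and only then identifies $\Phi=\Phi_0$ through the limiting Hopf differential $\chi_*$. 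Your route is more global and the identification of the limit is cleaner, bypassing the Hopf differential altogether.

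There are, however, two gaps. First, the claim that ``$y\mapsto G(x,y)$ is continuous and bounded on $\overline B_1$'' is false: the Green function has a logarithmic singularity at $y=x$. For interior $x$ this is easily repaired: on a small ball $B_\sigma(x)$ disjoint from the $a_j$'s you have the pointwise bound $\mu_\varepsilon\le C\varepsilon$ from \rlemma{lem:bound-grad} and \rth{th:main1}, so $\int_{B_\sigma(x)}|G(x,y)|\,\mu_\varepsilon(dy)=O(\varepsilon)$, while on the complement $G(x,\cdot)$ is bounded and the weak-$*$ convergence applies directly.

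The more serious gap is at the boundary. Both \rlemma{lem:bound-grad} and \rprop{prop:rhoto1} give their bounds only on $B_{1-\eta}\setminus\bigcup B_\eta(a_j)$, so for a compact $K$ meeting $\partial B_1$ you do \emph{not} yet know that $\mu_\varepsilon=O(\varepsilon)$ in $C^m(K)$, nor that $\|\Psi_\varepsilon\|_{L^\infty(K)}\le C_K$. With only the integral information $\mu_\varepsilon(K)\le C_K\varepsilon$ from \rprop{prop:away}, you cannot pass the Green representation to the limit against the unbounded kernel near $x\in\partial B_1$, and elliptic theory does not give $L^\infty$ control of $\Psi_\varepsilon$ from an $L^1$ right-hand side. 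The paper handles this via a dedicated boundary step: it uses \rprop{prop:away} and Fubini to select a half-circle $\partial B_{\tilde\eta}(b)\cap B_1$ on which $\int|\nabla\rho_\varepsilon|^2/\varepsilon^2\le C$, deduces $|\rho_\varepsilon-1|\le C\varepsilon$ along that arc, and then runs the harmonic-extension argument on the half-disk. Your Green-function route needs an analogous ingredient---some way to first obtain a pointwise bound on $\mu_\varepsilon$ (equivalently on $|\nabla\rho_\varepsilon|/\varepsilon$) up to $\partial B_1$---before the elliptic bootstrap can proceed there.
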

\begin{proof}
  The proof is divided to several steps.\\[2mm]
  \noindent\underline{Step 1: Convergence of $\frac{\rho_\varepsilon-1}{\varepsilon}\text{ in }
 C^k_{\text{loc}}(B_1\setminus\{a_1,\ldots,a_N\}).$ }\\[1mm]
  Let $x_0\in B_1\setminus\{a_1,\ldots,a_N\}$ be given. Choose
  $\eta>0$ such that $B_\eta(x_0)\subset
  B_1\setminus\{a_1,\ldots,a_N\}$. By \eqref{eq:EL} we have
  \begin{equation}
    \label{eq:128}
    \Delta\left(\frac{\rho_\varepsilon-1}{\varepsilon}\right)=\varepsilon\rho_\varepsilon|\nabla\varphi_\varepsilon|^2.
  \end{equation}
  Denoting as usual the harmonic extension
  of $\rho_\varepsilon$ by $\widetilde\rho_\varepsilon$, we set
  $w_\varepsilon:=\frac{\widetilde\rho_\varepsilon-1}{\varepsilon}$. It is a
  harmonic function that thanks to \rprop{prop:rhoto1} satisfies
  \begin{equation}
    \label{eq:129}
    \|w_\varepsilon\|_{L^\infty(\partial B_\eta(x_0))}\le C.
  \end{equation}
  It follows that
  \begin{equation}
    \label{eq:130}
    \|w_\varepsilon\|_{C^k(B_{3\eta/4}(x_0))}\le C,~\forall
    k\ge 1.
  \end{equation}
  In particular,
  \begin{equation}
    \label{eq:132}
     w_\varepsilon\to\Phi\text{ in }C^k(
   B_{\eta/2}(x_0))\text{ for all }k,
  \end{equation}
  and the limit $\Phi$  is a harmonic
   function. Now, by \eqref{eq:128} the function
   $f_\varepsilon:=(\frac{\rho_\varepsilon-1}{\varepsilon})-w_\varepsilon$
   satisfies
   \begin{equation}
     \label{eq:131}
     \left\{
       \begin{aligned}
     \Delta
     f_\varepsilon&=\varepsilon\rho_\varepsilon|\nabla\varphi_\varepsilon|^2~\text{
       in }B_\eta(x_0)\\
     f_\varepsilon&=0~\text{ on }\partial B_\eta(x_0).
   \end{aligned}
   \right.
   \end{equation}
  It follows from \eqref{eq:131} and \rth{th:main1} that
  $\|f_\varepsilon\|_{C^k(B_\eta(x_0))}=O(\varepsilon)$, for all $k\ge
  1$, which in conjunction with \eqref{eq:132} yields that
  \begin{equation}
    \label{eq:133}
   \frac{\rho_\varepsilon-1}{\varepsilon}\to\Phi\text{ in }C^k(B_{\eta/2}(x_0))\text{ for all }k.
 \end{equation}
 Since $x_0$ is arbitrary, we deduce the convergence
 \begin{equation}
   \label{eq:135}
   \frac{\rho_\varepsilon-1}{\varepsilon}\to\Phi\text{ in }
 C^k_{\text{loc}}(B_1\setminus\{a_1,\ldots,a_N\}).
 \end{equation}

  \noindent\underline{Step 2: Convergence of $\frac{\ln\rho_\varepsilon}{\varepsilon}\text{ in }
 C^k_{\text{loc}}(B_1\setminus\{a_1,\ldots,a_N\}).$ }\\[1mm]
 To deduce the same convergence for 
 $\ln\rho_\varepsilon/\varepsilon$, we note first that this function
 satisfies in $B_1\setminus\{a_1,\ldots,a_N\}$ the equation
 \begin{equation}
   \label{eq:134}
   \Delta
   (\ln\rho_\varepsilon/\varepsilon)=\varepsilon\Big(|\nabla\varphi_\varepsilon|^2-\Big(\frac{|\nabla\rho_\varepsilon|}{\varepsilon\rho_\varepsilon}\Big)^2\Big).
 \end{equation}
By \rth{th:main1} and \eqref{eq:135} we obtain that locally in
$B_1\setminus\{a_1,\ldots,a_N\}$, the  R.H.S.~of
\eqref{eq:134} is $O(\varepsilon)$. Therefore, by the same argument as
in the first part of the proof we can deduce that also
\begin{equation}
   \label{eq:162}
   \frac{\ln\rho_\varepsilon}{\varepsilon}\to\Phi\text{ in }
 C^k_{\text{loc}}(B_1\setminus\{a_1,\ldots,a_N\}),
 \end{equation}
noting that the limit must be the same $\Phi$ since locally in
$B_1\setminus\{a_1,\ldots,a_N\}$ we have 
$$
\frac{\ln\rho_\varepsilon}{\varepsilon}-\frac{\rho_\varepsilon-1}{\varepsilon}=O\Big(\frac{(1-\rho_\varepsilon)^2}{\varepsilon}\Big)=O(\varepsilon).
$$

 \noindent\underline{Step 3: Convergence of
  $\frac{\ln\rho_\varepsilon}{\varepsilon}\text{ and
  }\frac{\rho_\varepsilon-1}{\varepsilon}$ up to the boundary}\\[1mm]
 We recall
 that so far we haven't shown even that
 $\frac{|\nabla\rho_\varepsilon|}{\varepsilon}$ is bounded up to the
 boundary. Let $\eta$ satisfy
\begin{equation}
   \label{eq:136}
   0<\eta<\min\{1-|a_j|\}_{j=1}^N\,.
 \end{equation}
 Fix any point $b\in\partial B_1$.
 By  \rprop{prop:away} we have
 \begin{equation*}
   E_\varepsilon(u_\varepsilon;B_\eta(b)\cap B_1)\le C.
 \end{equation*}
  Therefore, by Fubini we can choose $\tilde\eta\in(\eta/2,\eta)$ such
  that
  \begin{equation}
    \label{eq:137}
    \int_{\partial B_{\tilde\eta}(b)\cap
      B_1}\frac{|\nabla\rho_\varepsilon|^2}{\varepsilon^2}\le C\,;
  \end{equation}
  note the improvement over \eqref{eq:50}.
  Denoting by $q$ any of the two points in $\partial B_{\tilde
     \eta}(b)\cap \partial B_1$, we obtain by the Cauchy-Schwarz
   inequality that
   \begin{equation}
    \label{eq:138}
     |\rho_\varepsilon(x)-1|=
     |\rho_\varepsilon(x)-\rho_\varepsilon(q)|\le
     C\varepsilon,\quad\forall x\in \partial B_{\tilde \eta}(b)\cap B_1\,,
   \end{equation}
    which is stronger than \eqref{eq:51}. We can now proceed as in the
    proof of the estimate around an interior point. In fact, setting
    $w_\varepsilon:=\frac{\widetilde\rho_\varepsilon-1}{\varepsilon}$,
    where, as usual, $\widetilde\rho_\varepsilon$ denotes the harmonic
    extension of $\rho_\varepsilon$ from $\partial\big(B_{\tilde\eta}(b)\cap B_1\big)$ to $B_{\tilde\eta}(b)\cap B_1$, we
    have thanks to \eqref{eq:138} that 
  \begin{equation}
    \label{eq:139}
    \|w_\varepsilon\|_{L^\infty(\partial (B_{\tilde\eta}(b)\cap B_1))}\le C.
  \end{equation}
 Therefore, analogously to \eqref{eq:130} we have 
  \begin{equation}
   \label{eq:163}
    \|w_\varepsilon\|_{C^k_{\text{loc}}(B_{\tilde\eta}(b)\cap B_1)}\le C,~\forall
    k\ge 1.
  \end{equation}
  This allows us to repeat the argument of Step\,2, using again the
  equation \eqref{eq:128}, to deduce that
  \begin{equation}
    \label{eq:140}
   \frac{\rho_\varepsilon-1}{\varepsilon}\to\Phi\text{ in }C^k(
   B_{\tilde\eta/2}(b)\cap B_1)\text{ for all }k.
 \end{equation}
 We can then argue as above to obtain that also
 \begin{equation}
   \label{eq:141}
   \frac{\ln\rho_\varepsilon}{\varepsilon}\to\Phi\text{ in }C^k(
   B_{\tilde\eta/2}(b)\cap B_1)\text{ for all }k.
 \end{equation}
  Since the point $b\in\partial B_1$ is arbitrary, we deduce
  that both convergences,
  $\frac{\rho_\varepsilon-1}{\varepsilon}\to\Phi$ and
  $\frac{\ln\rho_\varepsilon}{\varepsilon}\to\Phi$, hold in $C^k$-norm in a
  neighborhood of the boundary.\\[1mm]

   \noindent\underline{Step 4: Identification of the limit $\Phi$ as
     $\Phi_0$}\\[1mm]
   We already  know that $\Phi$ is a harmonic function in
   $B_1\setminus\{a_1,\ldots,a_N\}$, which is continuous in
   $\overline{B}_1\setminus\{a_1,\ldots,a_N\}$ and satisfies  $\Phi=0$ on
   $\partial B_1$. Recall the Hopf differentials
   $\{\chi_\varepsilon\}$ introduced in Subsection~\ref{subsec:iden}. In the
   proof of \rlemma{lem:bound-grad} we showed that
   $\{\chi_\varepsilon\}$ are bounded in $L^\infty_{\text{loc}}(B_1)$ (see
   \eqref{eq:108}). Therefore, we have $\chi_\varepsilon\to \chi_*$
   in $C^k_{\text{loc}}(B_1)$ where $\chi_*$ is holomorphic in $B_1$
   and locally bounded. In fact, thanks to Step~3 and \rth{th:main1} we can assert that
   the convergence actually holds in $C^k(B_1)$.  On the other hand,
   because of the convergences
   \begin{equation}
     \label{eq:142}
     \nabla u_\varepsilon\to \nabla(e^{i\varphi_*})~\text{ and
     }~\frac{\nabla\rho_\varepsilon}{\varepsilon}\to\nabla\Phi\text{ in
     }C^k({\overline{B}_1}\setminus\{a_j\}_{j=1}^N),
   \end{equation}
   established in \rth{th:main1}, and the previous steps, we have in $B_1\setminus\{a_j\}_{j=1}^N$:
   \begin{equation}
     \label{eq:143}
     \chi_*=\Big(\frac{\partial\varphi_*}{\partial
       x}\Big)^2-\Big(\frac{\partial\varphi_*}{\partial y}\Big)^2-
     2i \frac{\partial\varphi_*}{\partial x}\cdot\frac{\partial\varphi_*}{\partial y}+\Big(\frac{\partial\Phi}{\partial
       x}\Big)^2-\Big(\frac{\partial\Phi}{\partial y}\Big)^2-2i
     \Big(\frac{\partial\Phi}{\partial
       x}\Big)\cdot\Big(\frac{\partial\Phi}{\partial y}\Big)\,.
   \end{equation}
   Here and in the sequel we use $\varphi_*$ to denote the phase of
   $u_*$, but we keep in mind that this function is defined only {\em
     locally} in $B_1\setminus\{a_j\}_{j=1}^N$, and even there it is
   determined uniquely only up to an additive constant in
   $2\pi\Z$. Yet, the gradient $\nabla\varphi_*$ is globally defined
   in  $B_1\setminus\{a_j\}_{j=1}^N$. Since $\chi_*$ belongs to $L^\infty(B_1)$, we may take
   the modulus in both sides of
   \eqref{eq:143} and deduce that
   \begin{equation}
     \label{eq:109}
     |\nabla\Phi|^2=|\nabla\varphi_*|^2+O(1)~\text{ in }B_1\setminus\{a_j\}_{j=1}^N.
   \end{equation}
Since $|\nabla\varphi_*|\in L^p(B_1)$ for all $p\in[1,2)$
it follows from \eqref{eq:109} that also $|\nabla\Phi|\in L^p(B_1)$  for all
$p\in[1,2)$.

Since $\Phi$ is harmonic in  $B_1\setminus\{a_1,\ldots,a_N\}$ and
$|\nabla\Phi|\in L^1_{\text{loc}}(B_1)$, we must have
\begin{equation}
  \label{eq:145}
  \Delta\Phi=\sum_{j=1}^N(2\pi c_j)\delta_{a_j}~\text{ in the distributions sense},
\end{equation}
for some constants $\{c_j\}_{j=1}^N$. Therefore we have
\begin{equation}
  \label{eq:146}
  \Phi(z)=\sum_{j=1}^Nc_j\ln|z-a_j|+H~\text{ in }B_1,
\end{equation}
for some smooth harmonic function $H$.

We still need to determine the values of  $\{c_j\}_{j=1}^N$.
Fix any $j$ and assume for simplicity of notation that $a_j=0$.
In a punctured neighborhood of $0$, $B^*_\eta=B_\eta\setminus\{0\}$, we have
\begin{equation}
  \label{eq:147}
  e^{i\varphi_*}=e^{id_j\theta+f_j}\,,
\end{equation}
    where $f_j$ is a smooth harmonic function in a neighborhood of
    $0$ (including $0$). Similarly, in $B^*_\eta$
    we have also
    \begin{equation}
      \label{eq:148}
      \Phi(z)=c_j\ln|z|+h_j,
    \end{equation}
     with $h_j$ having the same properties as $f_j$. Rewriting
     \eqref{eq:143} as 
     \begin{equation*}
       \left(\frac{\partial\Phi}{\partial z}\right)^2=-\left(\frac{\partial\varphi_*}{\partial z}\right)^2+\chi_*/4,
     \end{equation*}
      and plugging \eqref{eq:147}--\eqref{eq:148}, yields
\begin{equation}
  \label{eq:150}
  \left(\frac{c_j}{z}+2\frac{\partial h_j}{\partial
      z}\right)^2=-\left(-i\frac{d_j}{z}+2\frac{\partial f_j}{\partial
      z}\right)^2+\chi_*~\text{ in }B^*_\eta.
\end{equation}
Multiplying \eqref{eq:150} by $z^2$ and sending $z$ to zero gives
$c_j^2=d_j^2$, so that $c_j=\pm d_j$. Since $\Phi\le0$ (as the limit of
$\ln\rho_\varepsilon/\varepsilon$) we conclude that $c_j=d_j$. Using
this for all $j$'s in \eqref{eq:145} clearly implies that
$\Phi=\Phi_0$, the function given in \eqref{eq:101}. \qed
\end{proof}
\subsection{A precise asymptotic estimate for the energy}
\label{subsec:precise}
Our next objective is to prove the lower bound in \eqref{eq:102}. Recall
that for the points $a_1,\ldots,a_N$ and degrees $d_1,\ldots,d_N$
given by \rth{th:main1} we associate the function $\Phi_0$ satisfying
\eqref{eq:101} and its conjugate harmonic function $\Theta_0$ (which
is well-defined only locally in $B_1\setminus\{a_1,\ldots,a_N\}$);
$\Theta_0$ is unique up to an additive constant in $2\pi\Z$ that we
can fix arbitrarily.  Once a representative of $\Theta_0$ is fixed, the
function $\phi=\varphi_*-\Theta_0$ is well defined on $\partial B_1$ and we denote by
$\widetilde\varphi$ its harmonic extension to $B_1$. We keep in mind
that $\widetilde\varphi$ is determined uniquely up to an additive
constant which is an integer multiple of $2\pi$.
\begin{lemma}
  \label{lem:fine-lb}
  For each small $\lambda>0$ we have
  \begin{equation}
  \label{eq:151}
E_\varepsilon(u_\varepsilon)\geq\frac{2\pi D}{\varepsilon}+\int_{B_1}|\nabla\widetilde\varphi|^2+o_\lambda(1)+o_\varepsilon^{(\lambda)}(1),
\end{equation}
 where $o_\varepsilon^{(\lambda)}(1)$ denotes a quantity that tends to $0$ with $\varepsilon$, for each
   fixed $\lambda$, while  $o_\lambda(1)$
   denotes a quantity that tends to $0$ with $\lambda$
   (independently of $\varepsilon$).
 \end{lemma}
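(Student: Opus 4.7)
The plan is to split $B_1$ as $\Omega_\lambda\cup\bigcup_{j=1}^N\overline{B_\lambda(a_j)}$ with $\Omega_\lambda:=B_1\setminus\bigcup_j\overline{B_\lambda(a_j)}$, and to estimate the energy separately on the two pieces. On $\Omega_\lambda$ I would use the strong $C^k$ convergences from \rth{th:main1} (giving $u_\varepsilon\to u_*$) and \rprop{prop:rho-precise} (giving $(\rho_\varepsilon-1)/\varepsilon\to\Phi_0$, hence also $|\nabla\rho_\varepsilon|^2/\varepsilon^2\to|\nabla\Phi_0|^2$ uniformly on $\Omega_\lambda$) to obtain
\[
E_\varepsilon(u_\varepsilon;\Omega_\lambda)=\int_{\Omega_\lambda}\!\bigl(|\nabla u_*|^2+|\nabla\Phi_0|^2\bigr)+o_\varepsilon^{(\lambda)}(1).
\]
Writing $\varphi_*=\Theta_0+\widetilde\varphi$ locally (which is how $\widetilde\varphi$ is defined) and using that $\Theta_0$ is a harmonic conjugate of $\Phi_0$, so $|\nabla\Theta_0|=|\nabla\Phi_0|$, the integrand rearranges as $2|\nabla\Phi_0|^2+2\,\nabla\Theta_0\cdot\nabla\widetilde\varphi+|\nabla\widetilde\varphi|^2$. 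The cross term is handled by integration by parts in $\Omega_\lambda$: $\Theta_0$ is harmonic there; $\Phi_0|_{\partial B_1}=0$ forces $\partial_\nu\Theta_0=-\partial_\tau\Phi_0=0$ on $\partial B_1$; and on each $\partial B_\lambda(a_j)$ the integrand is uniformly bounded, so $\int_{\Omega_\lambda}\nabla\Theta_0\cdot\nabla\widetilde\varphi=O(\lambda)$.

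For the inner discs I would apply the coarea-based lower bound of \rprop{prop:lb} restricted to $B_\lambda(a_j)$. With $\bar m_\varepsilon^j:=\min_{\partial B_\lambda(a_j)}\rho_\varepsilon$, for $t\in(0,\bar m_\varepsilon^j)$ the level set $\{\rho_\varepsilon=t\}$ is compactly contained in $B_\lambda(a_j)$ and the total degree of $u_\varepsilon/|u_\varepsilon|$ along it equals $d_j$ (once $\varepsilon$ is so small that all bad discs converging to $a_j$ lie in $B_{\lambda/2}(a_j)$, which is where $\varepsilon$-dependence enters). Coarea then gives
\[
E_\varepsilon(u_\varepsilon;B_\lambda(a_j))\ge\frac{2}{\varepsilon}\int_0^{\bar m_\varepsilon^j}\!2\pi d_j\,t\,dt=\frac{2\pi d_j}{\varepsilon}(\bar m_\varepsilon^j)^2.
\]
By \rprop{prop:rho-precise}, $\bar m_\varepsilon^j=1+\varepsilon\underline\Phi_\lambda^j+o_\varepsilon^{(\lambda)}(\varepsilon)$ with $\underline\Phi_\lambda^j:=\min_{\partial B_\lambda(a_j)}\Phi_0$, so $(\bar m_\varepsilon^j)^2=1+2\varepsilon\underline\Phi_\lambda^j+o(\varepsilon)$ and the inner discs contribute in total at least $\tfrac{2\pi D}{\varepsilon}+4\pi\sum_j d_j\underline\Phi_\lambda^j+o_\varepsilon^{(\lambda)}(1)$.

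Summing the two contributions, the proof reduces to checking
$2\int_{\Omega_\lambda}|\nabla\Phi_0|^2+4\pi\sum_j d_j\underline\Phi_\lambda^j=o_\lambda(1)$.
This is established by Green's identity: using $\Phi_0|_{\partial B_1}=0$, $\Delta\Phi_0=0$ in $\Omega_\lambda$, and the local expansion $\Phi_0(z)=d_j\ln|z-a_j|+R_j(z)$ with $R_j$ smooth near $a_j$, a short computation of the boundary integrals $-\sum_j\int_{\partial B_\lambda(a_j)}\Phi_0\,\partial_{\hat\nu}\Phi_0$ gives $\int_{\Omega_\lambda}|\nabla\Phi_0|^2=-2\pi\sum_j d_j\underline\Phi_\lambda^j+O(\lambda)$, which produces the required cancellation. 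Since $\widetilde\varphi$ is smooth on $B_1$, $\int_{\Omega_\lambda}|\nabla\widetilde\varphi|^2=\int_{B_1}|\nabla\widetilde\varphi|^2+o_\lambda(1)$, and \eqref{eq:151} follows. The main obstacle is precisely this cancellation: both $2\int_{\Omega_\lambda}|\nabla\Phi_0|^2$ and $4\pi\sum_j d_j\underline\Phi_\lambda^j$ are of size $O(|\log\lambda|)$ with opposite signs, and the scheme works only because the coarea argument automatically produces the very same quantity $\underline\Phi_\lambda^j$ that Green's identity for $\Phi_0$ yields on the boundary of each hole.
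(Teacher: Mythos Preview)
Your proof is correct and follows essentially the same approach as the paper: split $B_1$ into $\Omega_\lambda$ and the discs, use the $C^k$ convergences on $\Omega_\lambda$ to obtain $2\int_{\Omega_\lambda}|\nabla\Phi_0|^2+\int_{\Omega_\lambda}|\nabla\widetilde\varphi|^2$ (with the cross term handled via $\partial_\nu\Theta_0=-\partial_\tau\Phi_0$), apply the coarea lower bound on each $B_\lambda(a_j)$, and then show the two singular pieces cancel up to $o_\lambda(1)+o_\varepsilon^{(\lambda)}(1)$. The only cosmetic difference is that the paper routes the cancellation through the identity $m_j^2-2(m_j-1)=1+(m_j-1)^2$ after substituting $\Phi_0\to(\rho_\varepsilon-1)/\varepsilon\to(m_j-1)/\varepsilon$ on $\partial B_\lambda(a_j)$, whereas you expand $m_j=1+\varepsilon\,\underline\Phi_\lambda^j+o(\varepsilon)$ first and then match $4\pi\sum_j d_j\,\underline\Phi_\lambda^j$ directly against Green's identity for $\Phi_0$; the content is the same.
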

 \begin{proof}
   Fix a small $\lambda>0$ and denote $\Omega_\lambda=B_1\setminus\bigcup_{j=1}^N
       B_\lambda(a_j)$. By \rprop{prop:rho-precise} and
   \rth{th:main1} we have
   \begin{equation}
     \label{eq:144}
     \begin{aligned}
     E_\varepsilon(u_\varepsilon; \Omega_\lambda)&=
     \frac{1}{\varepsilon^2}\int_{\Omega_\lambda}|\nabla\rho_\varepsilon|^2+\int_{\Omega_\lambda}\rho_\varepsilon^2|\nabla\varphi_\varepsilon|^2\\
   &=\int_{\Omega_\lambda}|\nabla\Phi_0|^2+\int_{\Omega_\lambda}|\nabla\varphi_*|^2+o_\varepsilon^{(\lambda)}(1).
   \end{aligned}
   \end{equation}
    Since $\varphi_*=\Theta_0+\widetilde\varphi$, we have
   \begin{multline}
     \label{eq:149}
      \int_{\Omega_\lambda}|\nabla\varphi_*|^2=
     \int_{\Omega_\lambda}\Big(|\nabla\Theta_0|^2+2\nabla\Theta_0\nabla\widetilde\varphi+|\nabla\widetilde\varphi|^2\Big)\\
     =\int_{\Omega_\lambda}|\nabla\Theta_0|^2+\int_{\Omega_\lambda}|\nabla\widetilde\varphi|^2+
    2\sum_{j=1}^N\int_{\partial
      B_\lambda(a_j)}\frac{\partial\Theta_0}{\partial\nu}\widetilde\varphi+
    2\int_{\partial B_1}\frac{\partial\Theta_0}{\partial\nu}\widetilde\varphi\,.
   \end{multline}
   Here $\nu$ stands for the outward normal w.r.t.~the domain $\Omega_\lambda=B_1\setminus\bigcup_{j=1}^N
       B_\lambda(a_j)$ on each component of its boundary. Next we use
       the fact that
       $\frac{\partial\Theta_0}{\partial\nu}=-\frac{\partial\Phi_0}{\partial\tau}$
       which implies in particular that
       $\frac{\partial\Theta_0}{\partial\nu}=0$ on $\partial
       B_1$. Therefore
       \begin{equation}
         \label{eq:124}
         \int_{\Omega_\lambda}|\nabla\varphi_*|^2=\int_{\Omega_\lambda}|\nabla\Theta_0|^2+\int_{\Omega_\lambda}|\nabla\widetilde\varphi|^2-2\sum_{j=1}^N\int_{\partial
      B_\lambda(a_j)}\frac{\partial\Phi_0}{\partial\tau}\widetilde\varphi\,.
       \end{equation}
 Since $\big|\frac{\partial\Phi_0}{\partial\tau}\big|\le C$ on each $\partial
 B_\lambda(a_j) $ we have
 \begin{equation}
   \label{eq:153}
   \Big|\int_{\partial
     B_\lambda(a_j)}\frac{\partial\Phi_0}{\partial\tau}\widetilde\varphi\Big|
   \le C \|\widetilde\varphi\|_\infty(2\pi\lambda)=o_\lambda(1).
 \end{equation}
 Note that above we could have replaced $\|\widetilde\varphi\|_\infty$
 by $\min_{m\in\N} \|\widetilde\varphi-2\pi m\|_\infty$.
 By \eqref{eq:124}--\eqref{eq:153} we get
 \begin{equation}
 \label{eq:193}
   \int_{\Omega_\lambda}|\nabla\varphi_*|^2=\int_{\Omega_\lambda}|\nabla\Theta_0|^2+\int_{\Omega_\lambda}|\nabla\widetilde\varphi|^2+o_\lambda(1).
 \end{equation}
 By \eqref{eq:144},\eqref{eq:193} and the relation
 $|\nabla\Theta_0|=|\nabla\Phi_0|$ we finally obtain that
 \begin{equation}
   \label{eq:154}
   E_\varepsilon(u_\varepsilon;\Omega_\lambda)=2\int_{\Omega_\lambda}|\nabla\Phi_0|^2+
   \int_{\Omega_\lambda}|\nabla\widetilde\varphi|^2+o_\lambda(1)+o_\varepsilon^{(\lambda)}(1).
 \end{equation}

 We continue to estimate the first integral on the R.H.S.~of \eqref{eq:154}.
First we define for each $j$,
$m_j=m_j(\lambda,\varepsilon)=\min_{x\in\partial
  B_\lambda(a_j)}\rho_\varepsilon(x)$.
By \rprop{prop:rho-precise} we have
\begin{equation}
  \label{eq:155}
  2\int_{\Omega_\lambda}|\nabla\Phi_0|^2=2\sum_{j=1}^N \int_{\partial
    B_\lambda(a_j)}
  \Phi_0\,\frac{\partial\Phi_0}{\partial\nu}=2\sum_{j=1}^N \int_{\partial
    B_\lambda(a_j)}\left(\frac{\rho_\varepsilon-1}{\varepsilon}+o_\varepsilon^{(\lambda)}(1)\right)
  \frac{\partial\Phi_0}{\partial\nu}.
\end{equation}
Note that thanks again to \rprop{prop:rho-precise} we have
\begin{equation}
  \label{eq:156}
  \left\|\frac{\rho_\varepsilon-1}{\varepsilon}-\frac{m_j-1}{\varepsilon}\right\|_{L^\infty(\partial
    B_\lambda(a_j))}\le \max_{x,y\in\partial B_\lambda(a_j)}
  |\Phi_0(x)-\Phi_0(y)|+o_\varepsilon^{(\lambda)}(1)\le o_\lambda(1)+o_\varepsilon^{(\lambda)}(1).
\end{equation}
Therefore, for each $j$ we have
\begin{multline}
  \label{eq:157}
  \int_{\partial
    B_\lambda(a_j)}\left(\frac{\rho_\varepsilon-1}{\varepsilon}+o_\varepsilon^{(\lambda)}(1)\right)
  \frac{\partial\Phi_0}{\partial\nu}=\int_{\partial
    B_\lambda(a_j)}\left(\frac{m_j-1}{\varepsilon}+o_\lambda(1)
  +o_\varepsilon^{(\lambda)}(1)\right)
  \frac{\partial\Phi_0}{\partial\nu}\\
  =-2\pi d_j \left(\frac{m_j-1}{\varepsilon}\right)+o_\lambda(1) +o_\varepsilon^{(\lambda)}(1),
\end{multline}
 where we used the fact that $\int_{\partial
   B_\lambda(a_j)}\frac{\partial\Phi_0}{\partial\nu}=-2\pi d_j$ thanks
 to \eqref{eq:101}.
 Plugging \eqref{eq:157} in \eqref{eq:155} yields
 \begin{equation}
   \label{eq:158}
   2\int_{\Omega_\lambda}|\nabla\Phi_0|^2=-4\pi\sum_{j=1}^N d_j \left(\frac{m_j-1}{\varepsilon}\right)+o_\lambda(1) +o_\varepsilon^{(\lambda)}(1).
 \end{equation}
On the other hand, the argument based on the coarea formula, used in
the proof of \rprop{prop:lb} (and again in \eqref{eq:15}), gives that
\begin{equation}
  \label{eq:159}
  E_\varepsilon(u_\varepsilon;B_\lambda(a_j))\ge \frac{2}{\varepsilon}\int_{
    B_\lambda(a_j)}\rho_\varepsilon|\nabla\rho_\varepsilon||\nabla\varphi_\varepsilon|
  \ge\frac{2\pi
    d_jm_j^2}{\varepsilon}\,,~\forall j.
\end{equation}
Combining \eqref{eq:154},\eqref{eq:158} and \eqref{eq:159} we obtain,
\begin{equation}
  \label{eq:160}
  \begin{aligned}
  E_\varepsilon(u_\varepsilon)&\ge\frac{4\pi}{\varepsilon}\sum_{j=1}^N d_j\Big(\frac{m_j^2}{2}-(m_j-1)\Big)+
  \int_{\Omega_\lambda}|\nabla\widetilde\varphi|^2+o_\lambda(1)+o_\varepsilon^{(\lambda)}(1)\\&=
  \frac{2\pi D}{\varepsilon}+\frac{2\pi}{\varepsilon}\sum_{j=1}^N d_j(m_j-1)^2 +
  \int_{\Omega_\lambda}|\nabla\widetilde\varphi|^2+o_\lambda(1)+o_\varepsilon^{(\lambda)}(1)\\&=\frac{2\pi
    D}{\varepsilon}+\int_{\Omega_\lambda}|\nabla\widetilde\varphi|^2+o_\lambda(1)+o_\varepsilon^{(\lambda)}(1)
  \,,
\end{aligned}
\end{equation}
where in  the last estimate we used the fact that $1-m_j\le
C_\lambda\varepsilon$, implying that $(m_j-1)^2/\varepsilon\le
C_\lambda\varepsilon=o_\varepsilon^{(\lambda)}(1)$.
 The desired conclusion \eqref{eq:151} follows from \eqref{eq:160}
 since 
$$
\int_{\bigcup_{j=1}^N B_\lambda(a_j)}
|\nabla\widetilde\varphi|^2=o_{\lambda}(1).
$$
\qed
\end{proof}
\subsection{Conclusion of the proof of \rth{th:main2}}
  \begin{proof}[of \rth{th:main2}]
   Assertion (i) follows from \rprop{prop:rho-precise}. The inequality
   \enquote{$\le$} in \eqref{eq:102} was proved in
   \rprop{prop:ref-ub}. To prove the inequality
   \enquote{$\ge$} we use \rlemma{lem:fine-lb}. We first fix $\lambda$
   and send $\varepsilon$ to $0$ to get
   \begin{equation}
     \label{eq:161}
     \liminf_{\varepsilon\to0} E_\varepsilon(u_\varepsilon)-\frac{2\pi D}{\varepsilon}\ge \int_{B_1}|\nabla\widetilde\varphi|^2+o_\lambda(1).
   \end{equation}
Then, sending $\lambda$ to $0$ in \eqref{eq:161} yields 
\begin{equation*}
     \liminf_{\varepsilon\to0} E_\varepsilon(u_\varepsilon)-\frac{2\pi D}{\varepsilon}\ge \int_{B_1}|\nabla\widetilde\varphi|^2,
\end{equation*}
 and the conclusion follows. Finally, assertion (iii) is a direct
 consequence of assertion (ii). \qed
\end{proof}
\section{Proof of \rprop{prop:W}.}
\label{sec:prop}
 This short section is devoted to the proof \rprop{prop:W} that
 provides an explicit expression for the \enquote{excess energy}
 $d^2_{H^{1/2}}(g,\mathcal{H}_D(\partial\Omega))$
 and clarifies its relation with the renormalized energy $W$.
 \begin{proof}[of \rprop{prop:W}]
   Let $\Theta_0$ be a conjugate harmonic function of $\Phi_0$ as
   described in the beginning of Subsection\,\ref{subsec:precise}, but
   this time for a general simply connected domain $\Omega$. We have
   \begin{equation}
     \label{eq:188}
     d^2_{H^{1/2}}(g,\mathcal{H}_D(\partial\Omega))=\int_\Omega|\nabla\widetilde\varphi|^2,
   \end{equation}
where $\widetilde\varphi$ is the harmonic extension of the function
$\psi$ given on $\partial\Omega$ as $\psi=\varphi_*-\Theta_0$, i.e., 
  $e^{i\psi}=g/f_0$, with $f_0=U_0\Big|_{\partial\Omega}$ where
  $U_0=e^{i\Theta_0}$. Therefore,
  \begin{equation}
    \label{eq:191}
    u_*=U_0e^{i\widetilde\varphi}\text{ in }\Omega.
  \end{equation}
  Next we apply \eqref{eq:184} twice, first
 for $u_*$,
 \begin{equation}
   \label{eq:189}
    \int_{\Omega_\lambda}|\nabla
      u_*|^2=2\pi\Big(\sum_{j=1}^Nd_j^2\Big)\ln(1/\lambda)+W({\bf a},{\bf d},g)+O(\lambda^2),\text{
        as }\lambda\to0^+,
 \end{equation}
 and then for $U_0$,
 \begin{equation}
   \label{eq:190}
    \int_{\Omega_\lambda}|\nabla
      U_0|^2=2\pi\Big(\sum_{j=1}^Nd_j^2\Big)\ln(1/\lambda)+W({\bf a},{\bf d},f_0)+O(\lambda^2),\text{
        as }\lambda\to0^+.
    \end{equation}
  Since $|\nabla U_0|=|\nabla\Theta_0|$ and $|\nabla u_*|=|\nabla\varphi_*|$, we infer from \eqref{eq:189}--\eqref{eq:190}
  and \eqref{eq:193} that
  \begin{equation}
    \label{eq:192}
    W({\bf a},{\bf d},g)=W({\bf a},{\bf d},f_0)+\int_\Omega|\nabla\widetilde\varphi|^2.
  \end{equation}
 An immediate consequence of \eqref{eq:192} is that the minimum of
 $\widetilde W(\bf a,\bf d)$ (see \eqref{eq:95}) is attained by
 $f_0$. Clearly \eqref{eq:96} follows from \eqref{eq:192} and \eqref{eq:188}.

 \par Finally we turn to the proof of \eqref{eq:187}. Here we need an explicit expression for $\widetilde W(\bf a,\bf d)$
 in the case $\Omega=B_1$. Since now we know that the minimum defining 
$\widetilde W(\bf a,\bf d)$ in \eqref{eq:95} is attained by $f_0$, we
can rely on the formula \eqref{eq:190} and compute an asymptotic
expansion for $\int_{\Omega_\lambda}|\nabla\Phi_0|^2$ as
$\lambda\to0$. This can be done rather easily but a
similar computation was already done in
\cite[Prop.\,1]{lr96}:
\begin{equation}
  \label{eq:194}
      \widetilde W({\bf a},{\bf d})=-2\pi\sum_{j\ne k} d_j
      d_k\ln|a_j-a_k|+2\pi\sum_{j,k} d_jd_k\ln |1-{\bar a}_ja_k|.
    \end{equation}
    Finally, by \eqref{eq:186} and \eqref{eq:194} we obtain that
    \begin{equation}
      \label{eq:195}
      W({\bf a},{\bf d},g)-\widetilde
     W({\bf a},{\bf d})=\int_{\partial B_1}{\widetilde\Phi}_0(g\times
     g_\tau)-2\pi\sum_{j=1}^Nd_jR_0(a_j)-
     2\pi\sum_{j,k=1}^Nd_jd_k\ln|1-a_j{\bar a}_k|\,,
   \end{equation}
    and the result follows from \eqref{eq:96}. \qed
 \end{proof}
\section{Appendix - the thin film limit of the 3D model}
In this short appendix we will show that the two dimensional minimization problem of
the energy $E_\varepsilon$ over $H^1_g(\Omega)=H^1_g(\Omega;\R^2)$ (see
\eqref{eq:energy}) can be viewed as a limit of a
problem defined on a thin film, $\Omega_h:=\Omega\times(0,h)\subset\R^3$ when the thickness $h$
goes to zero. We {\em fix} $\varepsilon$ and for each $h>0$  let
$w_h=w_{h,\varepsilon}$ denote a minimizer for the problem
\begin{equation}
  \label{eq:164}
  \min\left\{
    F_h(u):=\int_{\Omega\times(0,h)}\left(\frac{1}{\varepsilon^2}-1\right)|\nabla|u||^2+|\nabla
    u|^2: u\in {\cal V}_h\right\}\,,
\end{equation}
where
\begin{equation}
  \label{eq:165}
  {\cal V}_h=\big\{u\in H^1(\Omega_h;\R^3):u(x,y,z)=g(x,y)\text{ for
  }(x,y,z)\in\partial\Omega\times(0,h),\,u\perp{\vec e}_3\text{ on }\Omega\times\{0,h\}\big\},
\end{equation}
with ${\vec e}_3$ denoting a unit vector in the direction of the
$z$-axis.

Next, for any $u\in {\cal V}_h$ we use rescaling to define $\tilde
u\in H^1(\Omega\times(0,1);\R^3)$ by setting
\begin{equation}
  \label{eq:166}
  \tilde u(x,y,z)=u(x,y,hz).
\end{equation}
A simple computation yields that
\begin{equation}
\begin{split}
  \label{eq:167}
  \widetilde F_h(\tilde u):=h^{-1}F_h(u)=
  \int_{\Omega\times(0,1)}\left(\frac{1}{\varepsilon^2}-1\right)|\nabla_{x,y}|\tilde
  u||^2&+|\nabla_{xy} \tilde u|^2\\
  &+
  \frac{1}{h^2}\int_{\Omega\times(0,1)}\left(\frac{1}{\varepsilon^2}-1\right)\Big|\frac{\partial
    |\tilde
  u|}{\partial z}\Big|^2+\Big|\frac{\partial\tilde u}{\partial z}\Big|^2\,.
\end{split}
\end{equation}
So Problem \eqref{eq:164} is equivalent to the following
problem:
\begin{equation}
 \label{eq:168}
  \min\left\{
    \widetilde F_h(\tilde u):  \tilde u\in H^1(\Omega\times(0,1);\R^3), \tilde u=g\text{ on }
 \partial\Omega\times(0,1),\,\tilde u\perp{\vec e}_3\text{ on }\Omega\times\{0,1\}\right\},
\end{equation}
 for which the minimizer is given by $\widetilde
 w_h(x,y,z)=w_h(x,y,hz)$.
 \begin{proposition}
   \label{prop:thin-film}
   For a subsequence we have
     \begin{equation}
       \label{eq:170}
        \lim_{h_n\to0} \widetilde w_{h_n}=U_\varepsilon,
     \end{equation}
     where $U_\varepsilon(x,y,z)=u_\varepsilon(x,y)$, with 
     $u_\varepsilon$ being a minimizer for $E_\varepsilon$ over
 $H^1_g(\Omega)$.
 \end{proposition}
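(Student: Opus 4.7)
The plan is a standard dimension-reduction argument, with three stages: a uniform upper bound via a $z$-independent competitor, extraction of a weak subsequential limit, and identification of the limit as a minimizer of $E_\varepsilon$ over $H^1_g(\Omega;\mathbb{R}^2)$.

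First I would build a $z$-independent competitor. Set $\tilde u(x,y,z):=(u_\varepsilon(x,y),0)\in H^1(\Omega\times(0,1);\mathbb{R}^3)$. This lies in the admissible class of \eqref{eq:168} since its trace on $\partial\Omega\times(0,1)$ equals $g$ and $\tilde u\perp\vec{e}_3$ everywhere, in particular on $\Omega\times\{0,1\}$. Because $\partial_z\tilde u\equiv 0$, the $1/h^2$-piece of $\widetilde F_h(\tilde u)$ vanishes and one obtains $\widetilde F_h(\tilde u)=E_\varepsilon(u_\varepsilon)$, so
\[
\widetilde F_h(\widetilde w_h)\le E_\varepsilon(u_\varepsilon)
\]
uniformly in $h$. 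Reading off the two groups of terms in \eqref{eq:167}, this yields an $h$-uniform bound on $\|\nabla_{xy}\widetilde w_h\|_{L^2}$ together with $\|\partial_z\widetilde w_h\|_{L^2}\le Ch$, so $\{\widetilde w_h\}$ is bounded in $H^1(\Omega\times(0,1);\mathbb{R}^3)$.

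Next I would extract a subsequence $h_n\to 0$ with $\widetilde w_{h_n}\rightharpoonup U$ weakly in $H^1$ and strongly in $L^2$. Since $\partial_z\widetilde w_{h_n}\to 0$ in $L^2$, necessarily $\partial_z U=0$, so $U(x,y,z)=u(x,y)$ for some $u\in H^1(\Omega;\mathbb{R}^3)$. Continuity of the trace on $\partial\Omega\times(0,1)$ under weak $H^1$ convergence forces $u=g$ on $\partial\Omega$; continuity of the trace on $\Omega\times\{0,1\}$ preserves the constraint $U\cdot\vec{e}_3=0$ there, which together with the $z$-independence of $U$ yields $u_3\equiv 0$ on $\Omega$. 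Hence $u\in H^1_g(\Omega;\mathbb{R}^2)$.

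For the last step, weak $L^2$ lower semicontinuity of $v\mapsto\int|\nabla_{xy}v|^2$ is standard. For the nonlinear term $\int|\nabla_{xy}|v||^2$, I would use that the $1$-Lipschitz map $v\mapsto|v|$ sends strong $L^2$ convergence to strong $L^2$ convergence of moduli, while the pointwise inequality $|\nabla|v||\le|\nabla v|$ gives a uniform $H^1$ bound on $\{|\widetilde w_{h_n}|\}$; uniqueness of the weak limit then forces $|\widetilde w_{h_n}|\rightharpoonup|U|$ in $H^1$. Combining,
\[
E_\varepsilon(u)\le\liminf_{n\to\infty}\widetilde F_{h_n}(\widetilde w_{h_n})\le E_\varepsilon(u_\varepsilon),
\]
so $u$ minimizes $E_\varepsilon$ over $H^1_g(\Omega;\mathbb{R}^2)$, and the conclusion holds with $U_\varepsilon(x,y,z)=u(x,y)$. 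The main obstacle is this lower-semicontinuity step for the modulus term, since $v\mapsto|v|$ is not smooth at the origin; the Lipschitz plus uniform $H^1$-bound argument circumvents it. As a bonus, the resulting chain of inequalities is actually an equality, which upgrades the weak $H^1$ convergence of both $\widetilde w_{h_n}$ and $|\widetilde w_{h_n}|$ to strong $H^1$ convergence, so \eqref{eq:170} can be read in the $H^1$ norm.
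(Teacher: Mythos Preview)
Your argument is correct and follows essentially the same route as the paper: upper bound via the $z$-independent competitor $U_\varepsilon$, weak $H^1$ compactness, $z$-independence and $\R^2$-valuedness of the limit from $\partial_z\widetilde w_{h_n}\to 0$ plus the top/bottom trace constraint, and weak lower semicontinuity to identify the limit as a minimizer (with strong convergence as a consequence). Your treatment of the lower-semicontinuity for the $|\nabla|v||^2$ term is more careful than the paper's, which simply invokes ``weak lower semicontinuity'' without elaboration.
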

 \begin{proof}
   Let $u_\varepsilon$ be any minimizer for $E_\varepsilon$ over
   $H^1_g(\Omega)$. Clearly
 $U_\varepsilon$ is an admissible map for
 \eqref{eq:168}, whence
 \begin{equation}
   \label{eq:169}
   \widetilde F_h(\widetilde w_h)\le \widetilde F_h(U_\varepsilon)=E_\varepsilon(u_\varepsilon).
 \end{equation}
 It follows from \eqref{eq:169} and \eqref{eq:167} that
 \begin{equation}
   \label{eq:171}
   \lim_{h\to0} \int_{\Omega\times(0,1)}\Big|\frac{\partial
    |
  \widetilde w_h|}{\partial z}\Big|^2+\Big|\frac{\partial \widetilde w_h}{\partial z}\Big|^2=0.
 \end{equation}
 Let $\widetilde w_{h_n}\rightharpoonup V_\varepsilon$ weakly in
 $H^1(\Omega\times(0,1);\R^3)$. In particular, for the trace we have,
 $\widetilde w_{h_n}\to V_\varepsilon$  strongly in
 $L^2(\Omega\times\{0,1\};\R^3)$ and a.e., so that 
 \begin{equation}
   \label{eq:172}
   V_\varepsilon\perp{\vec e}_3\text{ on }\Omega\times\{0,1\}.
 \end{equation}
 It follows from \eqref{eq:171} that
 $V_\varepsilon$ is independent of the $z$-variable,
 i.e.,$V_\varepsilon(x,y,z)=V_\varepsilon(x,y)$, while by  \eqref{eq:172} $V_\varepsilon$ is $\R^2$-valued. 
 Passing to the limit in \eqref{eq:169}, using weak lower
 semicontinuity, we get
 \begin{equation}
   \label{eq:173}
   \int_{\Omega\times(0,1)}\left(\frac{1}{\varepsilon^2}-1\right)|\nabla_{x,y}|V_\varepsilon||^2+
   |\nabla_{x,y}V_\varepsilon|^2=E_\varepsilon(V_\varepsilon)\le E_\varepsilon(u_\varepsilon).
 \end{equation}
 We conclude that $V_\varepsilon(x,y)$ is a minimizer for $E_\varepsilon$
 over $H^1_g(\Omega)$ and that $\{\widetilde w_{h_n}\}$ converges
 strongly to $V_\varepsilon$ in $H^1(\Omega\times(0,1);\R^3)$. \qed
 \end{proof}
   

\end{document}